\newcommand{\R}{{\bbR}}
\newcommand{\N}{{\mathbb N}}
\newcommand{\C}{{\mathbb C}}
\newcommand{\bbC}{{\mathbb{C}}}
\newcommand{\bbN}{{\mathbb{N}}}
\newcommand{\bbR}{{\mathbb{R}}}
\newcommand{\cB}{{\mathcal B}}
\newcommand{\cC}{{\mathcal C}}
\newcommand{\cH}{{\mathcal H}}
\newcommand{\cK}{{\mathcal K}}
\newcommand{\cP}{{\mathcal P}}
\newcommand{\cS}{{\mathcal S}}
\DeclareMathOperator{\supp}{supp}
\DeclareMathOperator{\dom}{dom}
\DeclareMathOperator{\tr}{tr}
\DeclareMathOperator*{\nlim}{n-lim}
\DeclareMathOperator*{\slim}{s-lim}
\renewcommand{\Im}{\text{\rm Im}}
\renewcommand{\ln}{\text{\rm ln}}
\newcommand{\beq}{\begin{equation}}
\newcommand{\enq}{\end{equation}}
\newcommand{\p}{\prime}
\newcommand{\sgn}{{\textrm{sgn}}}
\newcommand{\no}{\notag}
\newcommand{\lb}{\label}
\newcommand{\f}{\frac}
\newcommand{\ol}{\overline}
\newcommand{\wti}{\widetilde}
\newcommand{\hatt}{\widehat}
\newcommand{\bi}{\bibitem}
\let\geq\geqslant
\let\leq\leqslant
\def\theequation{\@arabic\c@equation}
\numberwithin{equation}{section}
\newtheorem{theorem}{Theorem}[section]
\newtheorem{lemma}[theorem]{Lemma}
\newtheorem{corollary}[theorem]{Corollary}
\newtheorem{hypothesis}[theorem]{Hypothesis}
\theoremstyle{remark}
\newtheorem{remark}[theorem]{Remark}
\newtheorem{definition}[theorem]{Definition}
\begin{document}

\title[Weak Convergence of Spectral Shift Functions]{Weak Convergence of Spectral Shift Functions for One-Dimensional Schr\"odinger Operators}

\author[F.\ Gesztesy and R.\ Nichols]{Fritz Gesztesy and Roger Nichols}
\address{Department of Mathematics,
University of Missouri, Columbia, MO 65211, USA}
\email{gesztesyf@missouri.edu}
\urladdr{http://www.math.missouri.edu/personnel/faculty/gesztesyf.html}
\address{Department of Mathematics,
University of Missouri, Columbia, MO 65211, USA}
\email{nicholsrog@missouri.edu}

\dedicatory{Dedicated with great pleasure to Eduard Tsekanovskii on the occasion of his 75th birthday}
\thanks{Submitted to {\it Math. Nachrichten.}}
\date{\today}
\subjclass[2010]{Primary 34L05, 34L25, 34L40; Secondary 34B24, 34B27, 47E05.}
\keywords{Spectral shift functions, Fredholm determinants, one-dimensional 
Schr\"odinger operators.}

\begin{abstract} 
We study the manner in which spectral shift functions associated with 
self-adjoint one-dimensional Schr\"odinger operators on the finite interval $(0,R)$ 
converge in the infinite volume limit $R\to\infty$ to the half-line spectral shift function.  

Relying on a Fredholm determinant approach combined with certain measure theoretic facts, we show that prior vague convergence results in the literature in the special 
case of Dirichlet boundary conditions extend to the notion of weak convergence and arbitrary separated self-adjoint boundary conditions at $x=0$ and $x=R$. 
\end{abstract}

\maketitle

\section{Introduction}  \lb{s1}

In a nutshell, we are interested in the manner in which spectral shift functions 
converge in an infinite volume limit. More precisely, in this paper we give an 
exhaustive treatment of the one-dimensional case in which the finite interval $(0,R)$ 
converges to the half-line $[0,\infty)$ (the case of the entire real line being completely analogous). We explicitly consider the case of all separated self-adjoint boundary 
conditions at the endpoints $0$ and $R$. The multi-dimensional case, based on an 
abstract approach to this circle of ideas, will appear elsewhere \cite{GN11}. 

Before we focus on the abstract situation discussed in this paper, it is 
appropriate to briefly survey the known results in this area. Consider self-adjoint 
Schr\"odinger operators $H_j$ and $H_{0,j}$ 
in $L^2((-j,j)^n; d^n x)$, $n\in\bbN$, $n\geq 2$, generated 
by the differential expression $-\Delta + V$ and $-\Delta$ on $(-j,j)^n$, respectively, 
with Dirichlet boundary conditions on $\partial (-j,j)^n$, where 
$0 \leq V \in L^\infty(\bbR^n; d^nx)$ is of fixed compact support in $(-j,j)^n$, 
real-valued, and nonzero a.e.\ Denoting by $\xi\big(\lambda; H_j,H_{0,j}\big)$ for 
a.e.\ $\lambda \in \R$, the spectral shift function associated with the pair 
$(H_j, H_{0,j})$ (cf.\ \cite[Ch.\ 8]{Ya92}), normalized to be zero in a neighborhood of 
$-\infty$, Kirsch \cite{Ki87} showed in 1987 that (perhaps, somewhat unexpectedly) for any $\lambda > 0$, 
\begin{equation}
\sup_{j\in\bbN} \big|\xi\big(\lambda; H_j,H_{0,j}\big)\big| = \infty.    \lb{1.1}
\end{equation}
Moreover, denoting by $H$ and $H_0$ the corresponding self-adjoint Schr\"odinger operators in $L^2(\bbR^n; d^n x)$ generated by the differential expression 
$-\Delta + V$ and $-\Delta$ on $\R^n$, respectively, one cannot expect pointwise 
a.e.\ convergence (or convergence in measure) 
of $\xi\big(\cdot; H_j,H_{0,j}\big)$ to $\xi\big(\lambda; H,H_0\big)$ in the infinite volume limit $j\to\infty$ by the following elementary argument: For a.e.\ $\lambda > 0$, 
$\xi\big(\lambda; H,H_0\big)$ is a continuous function with respect to $\lambda$, 
related to the determinant of the underlying fixed energy scattering matrix. Yet 
$\xi\big(\cdot; H_j,H_{0,j}\big)$, as a difference of eigenvalue counting functions corresponding to the number of eigenvalues (counting multiplicity) of $H_j$ and 
$H_{0,j}$, respectively, is integer-valued and hence cannot possibly converge to a non-constant continuous function as $j\to\infty$. In particular, this argument applies to the 
one-dimensional context (in which case $\xi\big(\lambda; H,H_0\big) \to 0$ as 
$\lambda\to\infty$).

Having ruled out pointwise a.e.\ convergence of spectral shift functions in the infinite 
volume limit $j\to\infty$ in all space dimensions, it becomes clear that one has to invoke the concept of certain  generalized limits. Indeed, in 1995, Geisler, Kostrykin, and 
Schrader \cite{GKS95} proved for potentials $V \in \ell^1(L^2(\bbR^3; d^3x))$ (a 
Birman--Solomyak space, cf., e.g., \cite[Ch.\ 4]{Si05}) that for all $\lambda \in \R$,
\begin{equation}
\lim_{j\to\infty} \int_{(-\infty,\lambda]} \xi\big(\lambda'; H_j,H_{0,j}\big) d\lambda'
= \int_{(-\infty,\lambda]} \xi\big(\lambda'; H,H_0\big) d\lambda'.    \lb{1.2}
\end{equation}
Since $H_j$ and $H_{0,j}$ are bounded from below uniformly with respect to 
$j\in\bbN$, the limiting relation \eqref{1.2} involving distribution functions of the spectral 
shift measures is equivalent to vague convergence of the latter as observed in 
\cite[Prop.\ 4.3]{HLMW01}.  

In the one-dimensional half-line context, Borovyk and Makarov \cite{BM09} (see also 
Borovyk \cite{Bo08}) proved in 2009 that for potentials $V \in L^1((0,\infty);(1+ |x|)dx)$ 
real-valued, and denoting by $H_R$ the self-adjoint Schr\"odinger operator in 
$L^2((0,R);dx)$ and $H$ the corresponding self-adjoint Schr\"odinger operator in 
$L^2((0,\infty);dx)$, both with Dirichlet boundary conditions (and otherwise maximally defined or defined in terms of quadratic forms), and analogously for $H_{0,R}$ and 
$H_0$ in the unperturbed case $V=0$, the following vague limit holds: 
\begin{align}
\begin{split}
& \, \text{For any $g \in C_0(\bbR)$,} \\ 
& \lim_{R\to\infty} \int_{\R} \xi\big(\lambda; H_R,H_{0,R}\big) d\lambda \, g(\lambda)
= \int_{\R} \xi\big(\lambda; H,H_0\big) d\lambda \, g(\lambda).   \lb{1.3}
\end{split} 
\end{align}
In addition, they proved that the following Ces{\` a}ro limit, 
\begin{equation}
\lim_{R\to\infty} \f{1}{R} \int_0^R \xi\big(\lambda; H_r,H_{0,r}\big) dr = 
\xi\big(\lambda; H,H_0\big), \quad 
\lambda \in \bbR\backslash\big(\sigma_{\rm d} (H) \cup \{0\}\big)    \lb{1.4}
\end{equation}
exists (and the limit in \eqref{1.4} extends to $\lambda = 0$ if $H$ has no zero energy resonance). 

Returning to the case of multi-dimensional boxes $[-R,R]^n$, Hislop and 
M\"uller \cite{HM10} (see also \cite{HM10a}) proved a result going somewhat beyond 
vague convergence in 2010. More precisely, assuming a real-valued background 
potential $V^{(0)}$ satisfying $V_-^{(0)} \in K(\R^n)$, $V_+^{(0)} \in K_{\rm loc}(\R^n)$ 
and a potential $0 \leq V \in K_{\rm loc}(\R^n)$, $\supp (V)$\,compact (cf.\ \cite{AS82}, 
\cite{Si82} for the definition of (local) Kato classes), they show that 
\begin{align}
& \, \text{For any $f \in C_0(\bbR)$, and for any $f = \chi_{J}$, $J\subset\R$ a finite interval,}    \no \\ 
& \lim_{R\to\infty} \int_{\R} 
\xi\big(\lambda; H_{0,R} + V^{(0)} +V,H_{0,R}+ V^{(0)}\big) d\lambda \, f(\lambda)  
\lb{1.5} \\
& \quad = 
\int_{\R} \xi\big(\lambda; H_0 + V^{(0)} +V,H_0+ V^{(0)}\big) d\lambda \, f(\lambda).   \no
\end{align}
In addition, they derived a weaker version than the Ces\'aro limit in (1.4) in the multi-dimensional context. More precisely, they proved that for every sequence of lengths 
$\{L_j\}_{j\in\N} \subset (0,\infty)$ with $\lim_{j\to\infty} L_j = \infty$, there exists a subsequence $\{j_k\}_{k\in\N} \subset \N$ with
$\lim_{k\to\infty} j_k = \infty$, such that for every subsequence 
$\{k_{\ell}\}_{\ell\in\N} \subset \N$ with $\lim_{\ell\to\infty} k_{\ell} = \infty$,
\begin{align}
\begin{split} 
& \lim_{L\to\infty} \f{1}{L} \sum_{\ell=1}^L 
\xi\big(\lambda; H_{L_{j_{k_\ell}}}^{(0)} + V^{(0)} +V, 
H_{L_{j_{k_\ell}}}^{(0)} + V^{(0)}\big)    \lb{1.6} \\
& \quad \leq \xi\big(\lambda; H_0 + V^{(0)} +V,H_0 + V^{(0)}\big)
\end{split} 
\end{align}
for (Lebesgue) a.e.\ $\lambda \in \R$.

Before describing our results we should mention that spectral shift functions feature prominently in the context of eigenvalue counting functions and hence in the 
context of the integrated density of states. We refer, for instance, to \cite{CHKN02}, 
\cite{CHN01}, \cite{HK02}, \cite{HKNSV06}, \cite{HS02}, \cite{Ko99}, \cite{KS99}, 
\cite{KS00}, \cite{Na01}, and the references cited therein. For bounds on the 
spectral shift function we refer to \cite{CHN01}, \cite{HKNSV06}, \cite{HS02}, and 
\cite{So93}. 

In Section \ref{s3} we introduce basic facts on one-dimensional Schr\"odinger operators,  
describe associated Green's functions, Krein-type resolvent formulas, compute 
the integral kernel for the square root of the resolvent of the Dirichlet Laplacian on a finite interval and on a half-line. In addition, we derive Jost--Pais-type 
\cite{JP51} (cf.\ also \cite{GM03} and the extensive literature therein) reductions of Fredholm determinants corresponding to Birman--Schwinger-type kernels to Wronski determinants of appropriate solutions of the associated Schr\"odinger equation. Basic convergence results of resolvents and closely related operators (including 
Birman--Schwinger-type kernels) are discussed in Section \ref{s4}. Finally, our 
principal Section \ref{s5} provides a new approach to vague, and especially, weak convergence of spectral shift functions in the infinite volume limit based on the 
convergence of underlying Fredholm determinants combined with certain measure theoretic facts. This then considerably extends \eqref{1.3} 
in a variety of ways: First, $g(\cdot)$ in \eqref{1.3} can now be replaced by 
$f(\cdot)/(1 + \lambda^2)$ with $f$ a bounded continuous function (this can further be improved). Second, we can permit any separated self-adjoint boundary conditions (not just 
Dirichlet boundary conditions) at $x=0$ and $x=R$. Third, we can permit real-valued (singular) potentials $V$ satisfying $V \in L^1((0,\infty); dx)$ (in the case of Dirichlet boundary conditions at $x=0$ one can even permit $V \in L^1((0,\infty); [x/(1+x)]dx)$). Appendix \ref{sA} collects basic properties of spectral shift functions used in 
Section \ref{s5}, and Appendix \ref{sB} derives a particular decomposition formula relating spectral shift functions for the interval $(0,R_1)$ and $(0,R_2)$, $0<R_1<R_2$, associated with Dirichlet boundary conditions at $x=0,R_1,R_2$. 
 
Finally, we briefly summarize some of the notation used in this paper: Let $\cH$ be a
separable complex Hilbert space, $(\cdot,\cdot)_{\cH}$ the scalar product in $\cH$
(linear in the second argument), and $I_{\cH}$ the identity operator in $\cH$.
Next, let $T$ be a linear operator mapping (a subspace of) a
Hilbert space into another, with $\dom(T)$ and $\ker(T)$ denoting the
domain and kernel (i.e., null space) of $T$. 
The resolvent set 
of a closed linear operator in $\cH$ will be denoted by $\rho(\cdot)$. 
The Banach spaces of bounded (resp., compact) linear operators on $\cH$ is
denoted by $\cB(\cH)$ (resp., $\cB_{\infty}(\cH)$). The corresponding 
$\ell^p$-based trace ideals will be denoted by $\cB_p (\cH)$, $p>0$. 

The form sum (resp.\ difference) of two self-adjoint operators $A$ and $W$ will be denoted by $A +_q W$ (resp., $A -_q W = A +_q (-W)$). 

The operator closure of a closable operator $T$ in $\cH$ will be denoted by 
$\ol T$.  

\medskip

\section{Preliminaries on One-Dimensional Schr\"odinger Operators and Associated Fredholm Determinants}  \lb{s3}

In this section we introduce the basics of one-dimensional Schr\"odinger operators,  
including Green's functions, Krein-type resolvent formulas, the integral kernel of 
the square root of the resolvent of the Dirichlet Laplacian on $(0,R)$ and $(0,\infty)$, 
and discuss Jost--Pais-type reductions of associated Fredholm determinants to 
Wronski determinants. 

We start with the case of finite interval Schr\"odinger operators on $(0,R)$. 

Let $R>0$, $\alpha, \beta \in [0,\pi)$, 
\begin{equation}
V \in L^1((0,R); dx) \, \text{ real-valued},    \lb{3.1}
\end{equation}
and introduce the differential expression $\tau$ by
\begin{equation}
\tau = - \f{d^2}{dx^2} + V(x), \quad x \in J,    \lb{3.2}
\end{equation}
with $J \subseteq \bbR$ an appropriate interval.

We start by considering the self-adjoint (regular) Schr\"odinger operator 
$H_{(0,R),\alpha,\beta}$ in $L^2((0,R); dx)$, 
\begin{align}
& (H_{(0,R),\alpha,\beta} f)(x) = (\tau f)(x), \quad x \in (0,R),   \no \\
& f \in \dom(H_{(0,R),\alpha,\beta}) =\big\{g \in L^2((0,R); dx) \, \big|\, 
g, g' \in AC([0,R]);   \no \\ 
& \;\,\, \sin(\alpha) g^{\prime}(0)+\cos(\alpha) g(0)=0, \, 
\sin(\beta) g^{\prime}(R)+\cos(\beta) g(R)=0;    \lb{3.3} \\
& \hspace*{6.45cm}  \tau f \in L^2((0,R); dx)\big\}.     \no 
\end{align}

Denoting by $\psi_{0,\alpha}(z,\cdot)$ and $\psi_{R,\beta}(z,\cdot)$ the Weyl-type solutions for 
$H_{(0,R),\alpha,\beta}$ satisfying $ \tau_{(0,R)} \psi = z \psi$ on $(0,R)$ and the boundary conditions in \eqref{3.3},
\begin{align} 
\begin{split} 
\sin(\alpha) \psi_{0,\alpha}^{\prime}(z,0)+\cos(\alpha) \psi_{0,\alpha}(z,0)&=0,  \\
\sin(\beta) \psi_{R,\beta}^{\prime}(z,R)+\cos(\beta) \psi_{R,\beta}(z,R)&=0,  
\lb{3.4}
\end{split}
\end{align}
at $0$ and $R$, respectively, the resolvent of $H_{(0,R),\alpha,\beta}$ is given by
\begin{align} 
\begin{split} 
& \big((H_{(0,R),\alpha,\beta} - z I_{(0,R)})^{-1}f \big)(x)=\int_0^R dx^{\prime} \, 
G_{(0,R),\alpha,\beta}(z,x,x^{\prime})f(x^{\prime}),    \lb{3.5} \\ 
& \hspace*{2.45cm} x\in (0,R), \; z\in \rho(H_{(0,R),\alpha,\beta}), \; f \in L^2((0,R); dx), 
\end{split}
\end{align}
with the Green's function $G_{(0,R),\alpha,\beta}$ of $H_{(0,R),\alpha,\beta}$ expressed in terms of the Weyl solutions $\psi_{0,\alpha}$ and $\psi_{R,\beta}$ by 
\begin{align} 
\begin{split} 
& G_{(0,R),\alpha,\beta}(z,x,x^{\prime}) 
= (H_{(0,R),\alpha,\beta}-z I_{(0,R)})^{-1}(x,x')     \\
&\quad =\frac{-1}{W(\psi_{0,\alpha}(z,\cdot),\psi_{R,\beta}(z,\cdot))}\begin{cases}
\psi_{0,\alpha}(z,x) \, \psi_{R,\beta}(z,x^{\prime}), & 0\leq x\leq x^{\prime}\leq R, \\
\psi_{0,\alpha}(z,x^{\prime}) \, \psi_{R,\beta}(z,x), & 0\leq x^{\prime}\leq x\leq R. 
\end{cases}    \lb{3.6}
\end{split}
\end{align}
Here the symbol $I_J$ abbreviates the identity operator in $L^2(J; dx)$ and 
\begin{equation}
W(f,g)(x)= f(x) g'(x) - f'(x) g(x), \quad x \in J,    \lb{3.7}
\end{equation}
denotes the Wronskian of $f$ and $g$ ($f, g \in C^1(J)$), with $J \subseteq \bbR$ 
an appropriate interval.

\medskip

The corresponding half-line Schr\"odinger operators on $(0,\infty)$ are 
defined as follows.

Let $\alpha \in [0,\pi)$, 
\begin{equation}
V \in L^1((0,\infty); dx) \, \text{ real-valued},    \lb{3.8}
\end{equation}
and consider the self-adjoint Schr\"odinger operator $H_{(0,\infty),\alpha}$ in 
$L^2((0,\infty); dx)$, 
\begin{align}
& (H_{(0,\infty),\alpha} f)(x) = (\tau f)(x), \quad x \in (0,\infty),   \no \\
& f \in \dom(H_{(0,\infty),\alpha}) =\big\{g \in L^2((0,\infty); dx) \, \big|\, 
g, g' \in AC([0,R]) \, \text{for all $R>0$};      \lb{3.9} \\ 
& \hspace*{3.7cm}  \sin(\alpha) g^{\prime}(0)+\cos(\alpha) g(0)=0; \, 
\tau f \in L^2((0,\infty); dx)\big\}.    \no
\end{align}
Again, we denote by $\psi_{0,\alpha}(z,\cdot)$ and $\psi_+(z,\cdot)$ the Weyl-type 
solutions for $H_{(0,\infty),\alpha}$ satisfying $\tau \psi = z \psi$ on $(0,\infty)$ and  
\begin{equation} 
\psi_+(z,\cdot)\in L^2((0,\infty);dx), \quad z\in \bbC\backslash \bbR,   \lb{3.10}
\end{equation}
with $\psi_{0,\alpha}(z,\cdot)$ satisfying the boundary condition in \eqref{3.9}
\begin{equation} 
\sin(\alpha) \psi_{0,\alpha}^{\prime}(z,0)+\cos(\alpha) \psi_{0,\alpha}(z,0)=0.   
\lb{3.11}
\end{equation}
One notes that $\psi_+(z,\cdot)$ is unique up to constant (possibly, 
$z$-dependent) multiples, and no boundary condition is needed at $+\infty$ 
due to the fact that assumption \eqref{3.8} implies the limit point case of 
$\tau$ at $+\infty$.
 
The resolvent of $H_{(0,\infty),\alpha}$ is then given by 
\begin{align} 
\begin{split} 
& \big((H_{(0,\infty),\alpha}-z I_{(0,\infty)})^{-1}f \big)(x)
=\int_0^{\infty}dx^{\prime} \, 
G_{(0,\infty),\alpha}(z,x,x^{\prime})f(x^{\prime}),    \lb{3.12} \\
& \hspace*{2.25cm} x\in (0,\infty), \; z\in \rho(H_{(0,\infty),\alpha}), 
\; f \in L^2((0,\infty); dx),
\end{split}
\end{align}
with the Green's function $G_{(0,\infty),\alpha}$ of $H_{(0,\infty),\alpha}$ 
expressed in terms of $\psi_{0,\alpha}$ and $\psi_+$ by
\begin{align}
\begin{split}  
& G_{(0,\infty),\alpha}(z,x,x^{\prime}) 
= (H_{(0,\infty),\alpha} - z I_{(0,\infty)})^{-1}(x,x')    \lb{3.13} \\
& \quad = \frac{-1}{W(\psi_{0,\alpha}(z,\cdot),\psi_+(z,\cdot))} \begin{cases}
\psi_{0,\alpha}(z,x) \, \psi_+(z,x^{\prime}), & 0\leq x\leq x^{\prime} < \infty, \\
\psi_{0,\alpha}(z,x^{\prime}) \, \psi_+(z,x), & 0\leq x^{\prime}\leq x < \infty. \end{cases}
\end{split} 
\end{align}

\medskip

We start with some Green's function formulas. First, we compare 
$G_{(0,R),\alpha,\beta}$ and $G_{(0,\infty),\alpha}$:

\begin{lemma}\lb{l3.1a}
Assume \eqref{3.8}. Then the Green's functions $G_{(0,R),\alpha,\beta}$ and 
$G_{(0,\infty),\alpha}$ defined in \eqref{3.6} and \eqref{3.13} satisfy the relation
\begin{align}
& G_{(0,R),\alpha,\beta}(z,x,x^{\p})=G_{(0,\infty),\alpha}(z,x,x^{\p}) \no\\
& \quad 
+\frac{\big[\sin(\beta)\psi_+^{\p}(z,R)+\cos(\beta)\psi_+(z,R) \big]
\psi_{0,\alpha}(z,x)\psi_{0,\alpha}(z,x^{\p})}
{\big[\sin(\beta)\psi_{0,\alpha}^{\p}(z,R)
+\cos(\beta)\psi_{0,\alpha}(z,R) \big]
W(\psi_{0,\alpha}(z,\cdot),\psi_+(z,\cdot))},     \lb{BT} \\
&\hspace*{3cm} 
z\in \rho(H_{(0,R),\alpha,\beta})\cap\rho(H_{(0,\infty),\alpha}), \; 0\leq x,x^{\p}\leq R.    \no 
\end{align}
\end{lemma}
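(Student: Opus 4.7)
The plan is to derive \eqref{BT} as a direct algebraic identity by expanding the Weyl-type solution $\psi_{R,\beta}(z,\cdot)$ that appears in $G_{(0,R),\alpha,\beta}$ in the basis of solutions $\{\psi_{0,\alpha}(z,\cdot),\psi_+(z,\cdot)\}$ naturally associated with $G_{(0,\infty),\alpha}$, and then comparing the two Green's function formulas \eqref{3.6} and \eqref{3.13}. For $z\in\rho(H_{(0,\infty),\alpha})\cap\rho(H_{(0,R),\alpha,\beta})$ the Wronskian $W(\psi_{0,\alpha}(z,\cdot),\psi_+(z,\cdot))$ is nonzero, so $\{\psi_{0,\alpha}(z,\cdot),\psi_+(z,\cdot)\}$ is a fundamental system for $\tau\psi=z\psi$ on $(0,R)$, and I may write
\begin{equation*}
\psi_{R,\beta}(z,x)=A(z)\,\psi_{0,\alpha}(z,x)+B(z)\,\psi_+(z,x),\quad 0\leq x\leq R.
\end{equation*}

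Next I would impose the boundary condition at $x=R$ from \eqref{3.4} on this expansion. Abbreviating $D_\beta(f)=\sin(\beta)f'(R)+\cos(\beta)f(R)$, the condition $D_\beta(\psi_{R,\beta}(z,\cdot))=0$ yields $A(z)\,D_\beta(\psi_{0,\alpha}(z,\cdot))+B(z)\,D_\beta(\psi_+(z,\cdot))=0$. Since $\psi_{R,\beta}$ is determined only up to a nonzero scalar multiple (which cancels in \eqref{3.6}), I may fix the normalization
\begin{equation*}
A(z)=-D_\beta(\psi_+(z,\cdot)),\qquad B(z)=D_\beta(\psi_{0,\alpha}(z,\cdot)).
\end{equation*}
Bilinearity of the Wronskian, together with $W(\psi_{0,\alpha},\psi_{0,\alpha})=0$, then gives
\begin{equation*}
W(\psi_{0,\alpha}(z,\cdot),\psi_{R,\beta}(z,\cdot))=D_\beta(\psi_{0,\alpha}(z,\cdot))\,W(\psi_{0,\alpha}(z,\cdot),\psi_+(z,\cdot)).
\end{equation*}
Observe that the hypothesis $z\in\rho(H_{(0,R),\alpha,\beta})$ is precisely what guarantees $D_\beta(\psi_{0,\alpha}(z,\cdot))\neq 0$, so the denominator appearing in \eqref{BT} is nonzero.

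Substituting the expansion for $\psi_{R,\beta}$ and this Wronskian identity into the upper branch $0\le x\le x'\le R$ of \eqref{3.6}, the factor $D_\beta(\psi_{0,\alpha}(z,\cdot))$ in the numerator cancels against the one in the denominator of the term containing $\psi_+(z,x')$, reproducing exactly the upper branch of $G_{(0,\infty),\alpha}(z,x,x')$ from \eqref{3.13}; the remaining piece is a rank-one term proportional to $\psi_{0,\alpha}(z,x)\psi_{0,\alpha}(z,x')$ with the coefficient displayed in \eqref{BT}. The symmetry of both $G_{(0,R),\alpha,\beta}$ and $G_{(0,\infty),\alpha}$ under $x\leftrightarrow x'$, together with the symmetric form of the correction term, then covers the lower branch $0\le x'\le x\le R$ simultaneously.

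The only real obstacle is bookkeeping: keeping track of signs and of the fact that the normalization of $\psi_{R,\beta}$ is free, and verifying that every denominator on the right-hand side of \eqref{BT} is well-defined for $z\in\rho(H_{(0,R),\alpha,\beta})\cap\rho(H_{(0,\infty),\alpha})$. The nonvanishing of $W(\psi_{0,\alpha}(z,\cdot),\psi_+(z,\cdot))$ follows from $z\notin\sigma(H_{(0,\infty),\alpha})$, and the nonvanishing of $D_\beta(\psi_{0,\alpha}(z,\cdot))$ from $z\notin\sigma(H_{(0,R),\alpha,\beta})$, since a zero of the latter would produce an $L^2((0,R);dx)$ eigenfunction at energy $z$ satisfying both boundary conditions. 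No analytical estimates are required; the entire proof is an algebraic identity on the common resolvent set.
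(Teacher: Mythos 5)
Your proof is correct, and it reaches \eqref{BT} by a route that differs in mechanics from the one in the paper. The paper applies both resolvents to an arbitrary $f\in L^2$, observes that the difference $g_{\alpha,\beta}-g_\alpha$ solves the homogeneous equation $(\tau-z)u=0$ on $[0,R]$, writes it as $c_1(z)\psi_{0,\alpha}+c_2(z)\psi_+$, and then uses \emph{both} boundary conditions satisfied by $g_{\alpha,\beta}$ — the one at $x=0$ to force $c_2(z)=0$, the one at $x=R$ to compute $c_1(z)$ — before stripping off the arbitrary $f$. You instead work entirely at the level of the explicit kernel \eqref{3.6}: you expand $\psi_{R,\beta}=A\psi_{0,\alpha}+B\psi_+$ in the fundamental system (legitimate since $W(\psi_{0,\alpha},\psi_+)\neq 0$ off $\sigma(H_{(0,\infty),\alpha})$), fix $A=-D_\beta(\psi_+)$, $B=D_\beta(\psi_{0,\alpha})$ from the single boundary condition at $R$ (exploiting the scaling freedom of $\psi_{R,\beta}$, which cancels in \eqref{3.6}), and obtain $W(\psi_{0,\alpha},\psi_{R,\beta})=D_\beta(\psi_{0,\alpha})\,W(\psi_{0,\alpha},\psi_+)$ by bilinearity. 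The substitution and cancellation you describe do reproduce the upper branch of \eqref{3.13} plus the stated rank-one correction, and the symmetry argument legitimately disposes of the other branch; your justifications for the nonvanishing of $D_\beta(\psi_{0,\alpha})$ and of $W(\psi_{0,\alpha},\psi_+)$ match the ones the paper gives for its denominators. What your version buys is brevity and the elimination of the test function $f$ and of the integral representations \eqref{B17}--\eqref{B18}; what the paper's version buys is that it never needs the explicit normalization of $\psi_{R,\beta}$ and generalizes more readily to settings where one compares resolvents without closed-form kernels (it is the same template the paper reuses in Lemma \ref{l3.2a}). Both are complete proofs of the statement.
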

\begin{proof}
Define
\begin{align}
g_{\alpha,\beta}(z,x)&=\int_0^R dx^{\p} \, G_{(0,R),\alpha,\beta}(z,x,x^{\p})f(x^{\p}), \quad x\in [0,R],\lb{B9}\\
g_{\alpha}(z,x)&=\int_0^{\infty}dx^{\p} \, G_{(0,\infty),\alpha}(z,x,x^{\p})f(x^{\p}), \quad x\geq 0,\lb{B10}\\
& \hspace*{-1.3cm} 
z\in \rho(H_{(0,\infty),\alpha})\cap\rho(H_{(0,R),\alpha,\beta}), \ f\in L^2((0,\infty);dx). \no 
\end{align}
Evidently,
\begin{equation}\lb{B11}
\bigg(-\frac{d^2}{dx^2}+V(x)-z \bigg)g_{\alpha,\beta}(z,x)=f(x)
=\bigg(-\frac{d^2}{dx^2}+V(x)-z \bigg)g_{\alpha}(z,x), \quad x\in [0,R], 
\end{equation}
that is, 
\begin{equation}\lb{B12}
\bigg(-\frac{d^2}{dx^2}+V(x)-z \bigg)\big(g_{\alpha,\beta}(z,x)-g_{\alpha}(z,x) \big)=0, 
\quad x\in [0,R].
\end{equation}
Therefore, $g_{\alpha,\beta}-g_{\alpha}$ is expressible as a linear combination 
of $\psi_{a,\alpha}(z,\cdot)$ and $\psi_+(z,\cdot)$ on $[0,R]$.  More precisely, there exist 
$z$-dependent constants $c_1(z)$ and $c_2(z)$ such that
\begin{align}\lb{B13}
\begin{split} 
& g_{\alpha,\beta}(z,x)=g_{\alpha}(z,x)+c_1(z)\psi_{0,\alpha}(z,x)
+c_2(z)\psi_+(z,x),   \\
& \hspace*{1.75cm}
x\in [0,R], \; z\in \rho(H_{(0,R),\alpha,\beta})\cap\rho(H_{(0,\infty),\alpha}).
\end{split} 
\end{align}
Since $g_{\alpha,\beta}(z,\cdot)$ belongs to the domain of $H_{(0,R),\alpha,\beta}$, it satisfies the boundary conditions in \eqref{3.3}, 
\begin{align}
\sin(\alpha)g_{\alpha,\beta}^{\p}(z,0)+\cos(\alpha)g_{\alpha,\beta}(z,0)&=0,  \lb{B15}\\
\sin(\beta)g_{\alpha,\beta}^{\p}(z,R)+\cos(\beta)g_{\alpha,\beta}(z,R)&=0.   \lb{B16}
\end{align}
Equation \eqref{B10} then yields 
\begin{align}
& g_{\alpha,\beta}(z,x)=-\frac{\psi_+(z,x)}{W_{0,\alpha,+}} \int_0^x dx^{\p} \, 
\psi_{0,\alpha}(z,x^{\p})f(x^{\p}) \no \\
&\quad -\frac{\psi_{0,\alpha}(z,x)}{W_{0,\alpha,+}}\int_x^R dx^{\p} \,\psi_+(z,x^{\p})f(x^{\p})
+c_1(z)\psi_{0,\alpha}(z,x)+c_2(z) \psi_+(z,x),     \lb{B17}\\
& g_{\alpha,\beta}^{\p}(z,x)= - \frac{\psi_+^{\p}(z,x)}{W_{0,\alpha,+}}\int_0^x dx^{\p} \, 
\psi_{0,\alpha}(z,x^{\p})f(x^{\p})    \no \\
&\quad -\frac{\psi_{0,\alpha}^{\p}(z,x)}{W_{0,\alpha,+}}\int_x^R dx^{\p} \, \psi_+(z,x^{\p})f(x^{\p}) +c_1(z)\psi_{0,\alpha}^{\p}(z,x)+c_2(z)\psi_+^{\p}(z,x),     \lb{B18}
\end{align}
where we abbreviated
\begin{equation}
W_{0,\alpha,+} = W(\psi_{0,\alpha}(z,\cdot),\psi_+(z,\cdot)).     \lb{B18A}
\end{equation}
Thus, \eqref{B15} becomes
\begin{align}
&\sin(\alpha)\bigg[\frac{-\psi_{0,\alpha}^{\p}(z,0)}{W_{0,\alpha,+}}\int_0^R dx^{\p} \, 
\psi_+(z,x^{\p})f(x^{\p})+c_1\psi_{0,\alpha}^{\p}(z,0)+c_2\psi_+^{\p}(z,0)\bigg]   \lb{B19} \\
&\quad +\cos(\alpha)\bigg[\frac{-\psi_{0,\alpha}(z,0)}{W_{0,\alpha+}}\int_0^R dx^{\p}
\, \psi_+(z,x^{\p})f(x^{\p}) + c_1\psi_{0,\alpha}(z,0)+c_2\psi_+(z,0) \bigg]=0.  \no 
\end{align}
Together with
\begin{equation}\lb{B21}
\sin(\alpha)\psi_{0,\alpha}^{\p}(z,0)+\cos(\alpha)\psi_{0,\alpha}(z,0)=0,
\end{equation}
since $\psi_{0,\alpha}$ satisfies the boundary condition at $0$, \eqref{B19} implies
\begin{equation}\lb{B22}
c_2(z)=0.
\end{equation}
To obtain \eqref{B22}, we have made use of the fact that
\begin{equation}
\sin(\alpha)\psi_+^{\p}(z,0)+\cos(\alpha)\psi_+(z,0)\neq 0;
\end{equation}
otherwise, $\psi_+(z,0)$ would satisfy the boundary condition in \eqref{3.9}, and thus be an eigenfunction of $H_{(0,\infty),\alpha}$, implying $z\in \sigma(H_{(0,\infty),\alpha})$.
In view of \eqref{B17} and \eqref{B18}, \eqref{B16} can be rewritten as
\begin{align}
&\sin(\beta)\bigg[\frac{-\psi^{\p}_+(z,R)}{W_{0,\alpha,+}}\int_0^R dx^{\p} \, 
\psi_{0,\alpha}(z,x^{\p})f(x^{\p})+c_1(z) \psi_{0,\alpha}^{\p}(z,R) 
+ c_2(z) \psi_+^{\p}(z,R)\bigg]    \no \\
& \quad +\cos(\beta)\bigg[\frac{-\psi_+(z,R)}{W_{0,\alpha,+}}\int_0^R dx^{\p} \, 
\psi_{0,\alpha}(z,x^{\p})f(x^{\p}) + c_1(z) \psi_{0,\alpha}(z,R)   \no \\
& \hspace*{7.3cm}  + c_2(z) \psi_+(z,R) \bigg]=0,    \lb{B20} 
\end{align}
which, together with \eqref{B22} yields
\begin{equation}\lb{B23}
c_1(z)=\frac{\big[ \sin(\beta)\psi_+^{\p}(z,R)+\cos(\beta)\psi_+(z,R)\big]
\frac{1}{W_{0,\alpha,+}}\int_0^R dx^{\p} \, \psi_{0,\alpha}(z,x^{\p})f(x^{\p})}{\sin(\beta)
\psi_{0,\alpha}^{\p}(z,R)+\cos(\beta)\psi_{0,\alpha}(z,R)}.
\end{equation}
One notes that the denominator in \eqref{B23} is always nonzero; otherwise, by  
\eqref{B21}, $\psi_{0,\alpha}(z,x)$ would satisfy \eqref{3.3}, and thus be an eigenfunction 
of $H_{(0,R),\alpha,\beta}$, implying $z\in \sigma(H_{(0,R),\alpha,\beta})$.

The result \eqref{BT} now follows by substituting \eqref{B22} 
and \eqref{B23} into \eqref{B17}, noting that $f\in L^2((0,\infty);dx)$ was arbitrary. 
\end{proof}

Next, we compare the half-line Green's functions $G_{(0,\infty),\widetilde{\alpha}}$ 
and $G_{(0,\infty),\alpha}$, that is, we investigate the integral kernels associated with 
a special case of Krein's formula for resolvents (cf.\ \cite[\& 106]{AG81}):

\begin{lemma}\lb{l3.2a} 
Assume \eqref{3.8} and $\widetilde{\alpha},\alpha\in [0,\pi)$ with $\widetilde{\alpha}\neq \alpha$. Then the Green's functions $G_{(0,\infty),\widetilde{\alpha}}$ and 
$G_{(0,\infty),\alpha}$ defined in \eqref{3.13} satisfy the relation 
\begin{align}
&G_{(0,\infty),\widetilde{\alpha}}(z,x,x^{\p})=G_{(0,\infty),\alpha}(z,x,x^{\p}) \no \\
&\quad +\frac{\big[\sin(\widetilde{\alpha})\psi_{0,\alpha}^{\p}(z,0)
+\cos(\widetilde{\alpha})\psi_{0,\alpha}(z,0) \big]\psi_+(z,x)\psi_+(z,x^{\p})}
{W_{0,\alpha,+}\big[\sin(\widetilde{\alpha})\psi_+^{\p}(z,0)+\cos(\widetilde{\alpha})
\psi_+(z,0)\big]},      \lb{B39} \\
&\hspace*{3.05cm} 
z\in \rho(H_{(0,\infty),\widetilde{\alpha}})\cap\rho(H_{(0,\infty),\alpha}), 
\; x,x^{\p}\geq 0.    \no 
\end{align}
\end{lemma}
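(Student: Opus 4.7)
My plan is to mimic the proof of Lemma \ref{l3.1a} but exploit the limit point case at $+\infty$ to eliminate one of the two constants from the start, so that only the boundary condition at $x=0$ needs to be enforced.

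Concretely, for $f \in L^2((0,\infty); dx)$ and $z \in \rho(H_{(0,\infty),\widetilde{\alpha}}) \cap \rho(H_{(0,\infty),\alpha})$, I set
\begin{align*}
g_{\widetilde{\alpha}}(z,x) &= \int_0^{\infty} dx^{\p} \, G_{(0,\infty),\widetilde{\alpha}}(z,x,x^{\p}) f(x^{\p}), \\
g_{\alpha}(z,x) &= \int_0^{\infty} dx^{\p} \, G_{(0,\infty),\alpha}(z,x,x^{\p}) f(x^{\p}), \quad x\geq 0.
\end{align*}
Both lie in $L^2((0,\infty); dx)$ and both satisfy $(\tau - z)g = f$ on $(0,\infty)$, so their difference satisfies the homogeneous equation $(\tau - z)(g_{\widetilde{\alpha}} - g_{\alpha}) = 0$, hence is a linear combination $c_1(z)\psi_{0,\alpha}(z,\cdot) + c_2(z)\psi_+(z,\cdot)$. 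Since $V \in L^1((0,\infty);dx)$ forces the limit point case at $+\infty$, the solution $\psi_{0,\alpha}(z,\cdot)$ is \emph{not} in $L^2$ near $+\infty$ (otherwise it would be a second $L^2$ solution together with $\psi_+(z,\cdot)$, contradicting limit point); therefore $c_1(z) = 0$ and
\begin{equation*}
g_{\widetilde{\alpha}}(z,x) = g_{\alpha}(z,x) + c_2(z) \psi_+(z,x), \quad x \geq 0.
\end{equation*}

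It then remains to determine $c_2(z)$ from the $\widetilde{\alpha}$-boundary condition at $x=0$. Using \eqref{B17} (with the upper limit $R$ replaced by $\infty$ and with no $c_1,c_2$ terms) I get
\begin{align*}
g_{\alpha}(z,0) &= -\tfrac{\psi_{0,\alpha}(z,0)}{W_{0,\alpha,+}} \int_0^{\infty} dx^{\p} \, \psi_+(z,x^{\p}) f(x^{\p}), \\
g_{\alpha}^{\p}(z,0) &= -\tfrac{\psi_{0,\alpha}^{\p}(z,0)}{W_{0,\alpha,+}} \int_0^{\infty} dx^{\p} \, \psi_+(z,x^{\p}) f(x^{\p}),
\end{align*}
so that imposing $\sin(\widetilde{\alpha}) g_{\widetilde{\alpha}}^{\p}(z,0) + \cos(\widetilde{\alpha}) g_{\widetilde{\alpha}}(z,0) = 0$ yields
\begin{equation*}
c_2(z) = \frac{[\sin(\widetilde{\alpha}) \psi_{0,\alpha}^{\p}(z,0) + \cos(\widetilde{\alpha}) \psi_{0,\alpha}(z,0)]}{W_{0,\alpha,+}[\sin(\widetilde{\alpha}) \psi_+^{\p}(z,0) + \cos(\widetilde{\alpha}) \psi_+(z,0)]} \int_0^{\infty} dx^{\p} \, \psi_+(z,x^{\p}) f(x^{\p}).
\end{equation*}
Substituting this back and noting that $f$ was arbitrary gives \eqref{B39}.

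The only point requiring a separate remark is that the denominator $\sin(\widetilde{\alpha}) \psi_+^{\p}(z,0) + \cos(\widetilde{\alpha}) \psi_+(z,0)$ is nonzero: if it vanished, then $\psi_+(z,\cdot)$ would simultaneously be $L^2$ at $+\infty$ and satisfy the boundary condition at $0$ associated with $\widetilde{\alpha}$, making it an eigenfunction of $H_{(0,\infty),\widetilde{\alpha}}$, contradicting $z \in \rho(H_{(0,\infty),\widetilde{\alpha}})$. The main (and only mild) obstacle is the $L^2$-elimination step that kills $c_1(z)$; everything else is a direct boundary-condition computation parallel to the proof of Lemma \ref{l3.1a}.
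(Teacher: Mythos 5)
Your proposal is correct and follows essentially the same route as the paper: write $g_{\widetilde{\alpha}}-g_{\alpha}$ as $c_1\psi_{0,\alpha}+c_2\psi_+$, kill $c_1$ by the $L^2$ requirement at $+\infty$ (the paper likewise invokes that $\psi_{0,\alpha}(z,\cdot)\notin L^2((0,\infty);dx)$ for $z$ in the resolvent set), and then solve for $c_2$ from the $\widetilde{\alpha}$-boundary condition at $x=0$, arriving at exactly the paper's expression \eqref{B37} and the nonvanishing of its denominator. No gaps.
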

\begin{proof}
For $\alpha\in [0,\pi)$, define
\begin{align}
\begin{split} 
& g_{\alpha}(z,x)= \int_0^{\infty}dx^{\p} \, G_{(0,\infty),\alpha}(z,x,x^{\p}) f(x'),     \\
& x\geq 0, \; z\in \rho(H_{(0,\infty),\alpha}), \; f\in L^2((0,\infty);dx).   \lb{B28} 
\end{split}
\end{align}
If $\widetilde{\alpha}\neq \alpha$, by the same argument leading to \eqref{B13} in the 
proof of Lemma \ref{l3.1a}, there are $z$-dependent constants, $c_1(z)$ and 
$c_2(z)$, such that
\begin{align}
g_{\widetilde{\alpha}}(z,x)&=g_{\alpha}(z,x)+c_1(z)\psi_{0,\alpha}(z,x) 
+c_2(z)\psi_+(z,x),    \lb{B29}\\
&\quad \,\, x\in[0,\infty), \; 
z\in \rho(H_{(0,\infty),\widetilde{\alpha}})\cap\rho(H_{(0,\infty),\alpha}). \no 
\end{align}
Since $g_{\widetilde{\alpha}},g_{\alpha}\in L^2((0,\infty);dx)$, one concludes that 
\begin{equation}\lb{B31}
c_1(z)=0, \quad z\in \rho(H_{(0,\infty),\widetilde{\alpha}})\cap\rho(H_{(0,\infty),\alpha}). 
\end{equation}
as $\psi_{0,\alpha}(z,\cdot)$ (resp., $\psi_{0,\wti \alpha}(z,\cdot)$) corresponding 
to $\alpha$ (resp., $\widetilde{\alpha}$) is not 
$L^2((0,\infty);dx)$ for $z\in \rho(H_{(0,\infty),\alpha})$ (resp., 
$z\in \rho(H_{(0,\infty),\widetilde{\alpha}})$).

Moreover, since
\begin{equation}  \lb{B32}
g_{\alpha}\in \dom(H_{(0,\infty),\alpha})\, \text{ and } \,  
g_{\widetilde{\alpha}}\in \dom(H_{(0,\infty),\widetilde{\alpha}}),
\end{equation}
each must satisfy the boundary condition in \eqref{3.9} at $0$, 
\begin{align}
\sin(\widetilde{\alpha})g_{\widetilde{\alpha}}^{\p}(z,0) 
+ \cos(\widetilde{\alpha}) g_{\widetilde{\alpha}}(z,0)&=0,      \lb{B33a}\\
\sin(\alpha)g_{\alpha}^{\p}(z,0)+\cos(\alpha)g_{\alpha}(z,0)&=0.       \lb{B33b}
\end{align}
Applying \eqref{B28},
\begin{align}
g_{\widetilde{\alpha}}(z,x)&=-\frac{1}{W_{0,\alpha,+}}\int_0^x dx^{\p} \, 
\psi_{0,\alpha}(z,x^{\p})\psi_+(z,x^{\p})f(z^{\p})  \no \\
&\quad-\frac{1}{W_{0,\alpha,+}}\int_x^{\infty}dx^{\p} \, \psi_{0,\alpha}(z,x^{\p})
\psi_+(z,x^{\p})f(x^{\p})  +c_2(z) \psi_+(z,x)     \lb{B34}\\
g_{\alpha,\beta}^{\p}(z,x)&= -\frac{\psi_+^{\p}(z,x)}{W_{0,\alpha,+}}\int_0^x dx^{\p} \,
\psi_{0,\alpha}(z,x^{\p})f(x^{\p}) \no \\
&\quad  -\frac{\psi_{0,\alpha}^{\p}(z,x)}{W_{0,\alpha,+}}\int_x^{\infty} dx^{\p} \, 
\psi_+(z,x^{\p})f(x^{\p}) + c_2(z) \psi_+(z,x),      \lb{B35}\\
&\hspace*{4.4cm} x\geq 0, \; f\in L^2((0,\infty);dx). \no 
\end{align}
Therefore, \ref{B33a} becomes
\begin{align}
&\sin(\widetilde{\alpha})\bigg[\frac{\psi_{0,\alpha}^{\p}(z,0)}
{W_{0,\alpha,+}}\int_0^{\infty} dx^{\p} \, \psi_+(z,x^{\p})f(x^{\p})
+ c_2(z) \psi_+^{\p}(z,0)\bigg] \no \\
&\quad + \cos(\widetilde{\alpha})\bigg[\frac{\psi_{0,\alpha}(z,0)}
{W_{0,\alpha,+}}\int_0^{\infty} dx^{\p} \, \psi_+(z,x^{\p})f(x^{\p})
+ c_2(z) \psi_+(z,0) \bigg]=0.    \lb{B36}
\end{align}
Hence,
\begin{equation}    \lb{B37}
c_2(z)=\frac{\big[\sin(\widetilde{\alpha})\psi_{0,\alpha}^{\p}(z,0)+\cos(\widetilde{\alpha})\psi_{0,\alpha}(z,0) \big]\frac{1}{W_{0,\alpha,+}}\int_0^{\infty} dx^{\p} \, 
\psi_+(z,x^{\p})f(x^{\p})}{\sin(\widetilde{\alpha})\psi_+^{\p}(z,0) 
+ \cos(\widetilde{\alpha})\psi_+(z,0)}.
\end{equation}
One notes that the denominator in \eqref{B37} is nonzero; otherwise,  
$z\in \sigma(H_{(0,\infty),\widetilde{\alpha}})$.  Since $f\in L^2((0,\infty);dx)$ is arbitrary, substituting \eqref{B31} and \eqref{B37} into \eqref{B29} yields \eqref{B39}.
\end{proof}

In the following, specializing to the unperturbed case, where $V = 0$ on 
$(0,R)$, we abbreviate the corresponding quantities with the superscript $(0)$. This notation applies to the unperturbed Schr\"odinger operator $H_{(0,R),\alpha,\beta, }^{(0)}$, the unperturbed differential expression $\tau^{(0)} = - d^2/dx^2$, the unperturbed 
Green's function $G_{(0,R),\alpha,\beta, }^{(0)}$, and to the corresponding unperturbed Weyl-type solutions of $\tau^{(0)} \psi = z \psi$ on $(0,R)$. The latter are given by  
\begin{align}
\psi_{0,\alpha}^{(0)}(z,x)&=\cos(\alpha)\frac{\sin(z^{1/2}x)}{z^{1/2}} 
- \sin(\alpha)\cos(z^{1/2}x)    \no \\
\psi_{R,\beta}^{(0)}(z,x)&=\cos(\beta)\frac{\sin(z^{1/2}(R-x))}{z^{1/2}} 
+ \sin(\beta) \cos(z^{1/2}(R-x)),    \lb{3.14} \\
&\hspace*{4.27cm}  x \in (0,R), \; \Im(z^{1/2})\geq 0,    \no
\end{align}
implying 
\begin{align}
W\big(\psi_{R,\beta}^{(0)}(z,\cdot),\psi_{0,\alpha}^{(0)}(z,\cdot)\big) 
&=\sin(\beta-\alpha)\cos(z^{1/2}R) 
+ \sin(\alpha)\sin(\beta)z^{1/2}\sin(z^{1/2}R)    \no \\
&\quad + \cos(\alpha)\cos(\beta)\frac{\sin(z^{1/2}R)}{z^{1/2}}.     \lb{3.15}
\end{align}

For the corresponding perturbed solutions (i.e., for $V \neq 0$) one obtains 
\begin{align}
\psi_{0,\alpha}(z,x)&=\psi_{0,\alpha}^{(0)}(z,x) 
+ \int_0^x dx' \, \frac{\sin(z^{1/2}(x-x'))}{z^{1/2}}V(x')\psi_{0,\alpha}(z,x'), 
\lb{3.16a} \\
\psi_{R,\beta}(z,x)&=\psi_{R,\beta}^{(0)}(z,x) 
- \int_x^R dx' \, \frac{\sin(z^{1/2}(x-x'))}{z^{1/2}} V(x') 
\psi_{R,\beta}(z,x'),     \lb{3.16} \\
&\hspace*{4.9cm}   0\leq x\leq R, \; \Im(z^{1/2}) \geq 0,    \no
\end{align}
implying 
\begin{align} 
\psi_{0,\alpha}^{\p}(z,x)&=\psi_{0,\alpha}^{(0)\p}(z,x) + \int_0^x dx^{\p} \, 
\cos(z^{1/2}(x-x^{\p})) V(x^{\p}) \psi_{0,\alpha}(z,x^{\p}),    \no \\
\psi_{R,\beta}^{\p}(z,x)&=\psi_{R,\beta}^{(0)\p}(z,x) - \int_x^R dx^{\p} \, 
\cos(z^{1/2}(x-x^{\p})) V(x^{\p}) \psi_{R,\beta}(z,x^{\p}),  \lb{3.17}\\
&\hspace*{5cm} x \in (0, R), \; \Im(z^{1/2}) \geq 0, \no
\end{align}
and hence,
\begin{align}
\begin{split}
\psi_{0,\alpha}(z,0) &= \psi_{0,\alpha}^{(0)}(z,0), \quad \;\;\;
\psi_{0,\alpha}^{\p}(z,0) = \psi_{0,\alpha}^{(0)\p}(z,0).   \lb{3.18} \\
\psi_{R,\beta}(z,R) &= \psi_{R,\beta}^{(0)}(z,R), \quad 
\psi_{R,\beta}^{\p}(z,R) = \psi_{R,\beta}^{(0)\p}(z,R).     
\end{split} 
\end{align}

In the half-line case $(0,\infty)$ one introduces, in obvious notation, the unperturbed Schr\"odinger operator $H_{(0,\infty),\alpha}^{(0)}$, the unperturbed 
Green's function $G_{(0,\infty),\alpha}^{(0)}$, and the corresponding unperturbed 
Weyl-type solutions of $\tau^{(0)} \psi = z \psi$ on $(0,\infty)$. The latter are given by  
\begin{align}
\begin{split} 
\psi_{0,\alpha}^{(0)}(z,x) = \cos(\alpha)\frac{\sin(z^{1/2}x)}{z^{1/2}} 
- \sin(\alpha)\cos(z^{1/2}x),   \quad 
\psi_+^{(0)}(z,x) = e^{iz^{1/2}x},&   \lb{3.19} \\
x \in (0,\infty), \; \Im(z^{1/2})\geq 0,& 
\end{split}
\end{align}
implying 
\begin{equation}\lb{3.20}
W\big(\psi_+^{(0)}(z,\cdot),\psi_{0,\alpha}^{(0)}(z,\cdot)\big)=\cos(\alpha)+i z^{1/2}\sin(\alpha).
\end{equation}

For the corresponding perturbed solutions (i.e., for $V \neq 0$) one obtains 
\begin{align}
\psi_{0,\alpha}(z,x)&=\psi_{0,\alpha}^{(0)}(z,x) 
+ \int_0^x dx^{\p} \, \frac{\sin(z^{1/2}(x-x^{\p}))}{z^{1/2}} V(x^{\p}) 
\psi_{0,\alpha}(z,x^{\p}),    \no \\
\psi_+(z,x)&=\psi_+^{(0)}(z,x) - \int_x^{\infty} dx^{\p} \, 
\frac{\sin(z^{1/2}(x-x^{\p}))}{z^{1/2}} V(x^{\p}) \psi_+(z,x^{\p}),   \lb{3.21} \\
&\hspace*{4.65cm} x \in (0,\infty), \; \Im(z^{1/2}) \geq 0,    \no
\end{align}
implying
\begin{align}
\psi_{0,\alpha}^{\p}(z,x)&=\psi_{0,\alpha}^{(0)\p}(z,x) + \int_0^x dx^{\p} \, 
\cos(z^{1/2}(x-x^{\p})) V(x^{\p}) \psi_{0,\alpha}(z,x^{\p}), \no \\
\psi_+^{\p}(z,x)&=\psi_+^{(0)\p}(z,x) - \int_x^{\infty} dx^{\p} \, 
\cos(z^{1/2}(x-x^{\p})) V(x^{\p}) \psi_+(z,x^{\p}),    \lb{3.22}\\
&\hspace*{4.7cm} x \in (0,\infty), \; \Im(z^{1/2}) \geq 0,   \no
\end{align}
and hence, 
\begin{equation}\lb{3.23}
W(\psi_+(z,\cdot),\psi_{0,\alpha}(z,\cdot))=\cos(\alpha)\psi_+(z,0)+\sin(\alpha)\psi_+^{\p}(z,0).
\end{equation}

To make the connection between Fredholm determinants and ratios of Wronskians, we first need a technical result on integral kernels of square roots 
of resolvents in the special case $V=0$ a.e.\ on $(0,R)$. For simplicity we state the result in the case of Dirichlet boundary conditions (i.e., for $\alpha = \beta = 0$) only:

We start by recalling the explicit formulas for the Dirichlet Green's functions, 
\begin{align}
& G_{(0,R),0,0}^{(0)}(z,x,x')   \no \\
& \quad = \f{1}{z^{1/2}\sin(z^{1/2}R)} \begin{cases} 
\sin(z^{1/2}x) \sin(z^{1/2}(R-x')), & 0 \leq x \leq x' \leq R, \\
\sin(z^{1/2}x') \sin(z^{1/2}(R-x)), & 0 \leq x' \leq x \leq R,
\end{cases}     \lb{3.23a} \\[1mm]
& G_{(0,\infty),0}^{(0)}(z,x,x') = \f{1}{z^{1/2}} \begin{cases} 
\sin(z^{1/2}x) e^{z^{1/2}x'}, & 0 \leq x \leq x' < \infty, \\
\sin(z^{1/2}x') e^{z^{1/2}x}, & 0 \leq x' \leq x < \infty. 
\end{cases}     \lb{3.23b}
\end{align}

\begin{lemma} \lb{l3.1}
Let $E>0$. \\
$(i)$ The integral kernel $R_{(0,R),0,0}^{(0),1/2}(-E,\cdot,\cdot)$
for $(H_{(0,R),0,0}^{(0)} + E)^{-1/2}$ 
is given by 
\begin{equation}\lb{3.24}
R_{(0,R),0,0}^{(0), 1/2}(-E,x,x^{\p})=\begin{cases}
2\pi^{-1}\int_{E^{1/2}}^{\infty}\frac{\sinh(sx)}{(s^2-E)^{1/2}}\frac{\sinh(s(R-x^{\p}))}{\sinh(sR)}ds, & 0\leq x \leq x^{\p} \leq R, \\
2\pi^{-1}\int_{E^{1/2}}^{\infty}\frac{\sinh(sx^{\p})}{(s^2-E)^{1/2}}\frac{\sinh(s(R-x))}{\sinh(sR)} ds, & 0\leq x^{\p} \leq x \leq R.
\end{cases} 
\end{equation}
$(ii)$ The integral kernel $R_{(0,\infty),0}^{(0), 1/2}(-E,\cdot,\cdot)$ of 
$(H_{(0,\infty),0} + E)^{-1/2}$ is given by 
\begin{equation}\lb{3.25}
R_{(0,\infty),0}^{(0), 1/2}(-E,x,x^{\p})
= \pi^{-1} \big[K_0\big(E^{1/2}|x^{\p}-x|\big)-K_0\big(E^{1/2}(x+x^{\p})\big)\big], 
\quad x, x' \in (0,\infty), 
\end{equation}
where $K_0(\cdot)$ denotes the modified irregular Bessel function of order zero 
$($cf.\ \cite[Sect.\ 9.6]{AS72}$)$.  
\end{lemma}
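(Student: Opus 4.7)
My plan is to derive both formulas from the scalar functional calculus identity
\[
(\lambda+E)^{-1/2} = \frac{2}{\pi}\int_{0}^{\infty}\frac{du}{\lambda+E+u^{2}}, \qquad \lambda\ge 0,
\]
and then transform the integration variable via $u=(s^{2}-E)^{1/2}$ (equivalently $u\,du=s\,ds$), which yields
\[
(\lambda+E)^{-1/2} = \frac{2}{\pi}\int_{E^{1/2}}^{\infty}\frac{s\,ds}{(s^{2}-E)^{1/2}(\lambda+s^{2})}.
\]
Applying the spectral theorem (to either $H_{(0,R),0,0}^{(0)}$, whose spectrum is discrete, or $H_{(0,\infty),0}^{(0)}$, via its spectral measure) then produces the operator identity
\[
\bigl(H+E\bigr)^{-1/2} \;=\; \frac{2}{\pi}\int_{E^{1/2}}^{\infty}\frac{s\,ds}{(s^{2}-E)^{1/2}}\,(H+s^{2})^{-1},
\]
with the integral converging in the strong operator topology. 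Passing to integral kernels is justified by Fubini--Tonelli applied to the (positive) Dirichlet eigenfunction expansion of the resolvent.

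For part $(i)$, I would substitute $z=-s^{2}$ (so $z^{1/2}=is$ and $\sin(isy)=i\sinh(sy)$) into the Dirichlet Green's function formula \eqref{3.23a}, which gives for $0\le x\le x^{\p}\le R$
\[
G^{(0)}_{(0,R),0,0}(-s^{2},x,x^{\p}) \;=\; \frac{\sinh(sx)\sinh(s(R-x^{\p}))}{s\sinh(sR)},
\]
and symmetrically for $0\le x^{\p}\le x\le R$. The spare factor $s$ in the measure cancels exactly against the $s$ in the denominator of the Green's function, reproducing \eqref{3.24}. For part $(ii)$, the same substitution in \eqref{3.23b} yields $G^{(0)}_{(0,\infty),0}(-s^{2},x,x^{\p})=s^{-1}\sinh(sx)e^{-sx^{\p}}$ for $0\le x\le x^{\p}$, so the integral representation becomes
\[
R_{(0,\infty),0}^{(0),1/2}(-E,x,x^{\p}) \;=\; \frac{1}{\pi}\int_{E^{1/2}}^{\infty}\frac{e^{-s(x^{\p}-x)}-e^{-s(x+x^{\p})}}{(s^{2}-E)^{1/2}}\,ds,
\]
after expanding $2\sinh(sx)e^{-sx^{\p}}=e^{-s(x^{\p}-x)}-e^{-s(x+x^{\p})}$. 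Applying the standard integral representation $K_{0}(a)=\int_{1}^{\infty}e^{-at}(t^{2}-1)^{-1/2}\,dt$ for $a>0$, together with the rescaling $s=E^{1/2}t$ which gives $K_{0}(E^{1/2}b)=\int_{E^{1/2}}^{\infty}e^{-sb}(s^{2}-E)^{-1/2}\,ds$, identifies each term as a Macdonald function and produces \eqref{3.25}; the case $0\le x^{\p}\le x$ is symmetric and accounts for the absolute value $|x^{\p}-x|$.

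The main obstacle is the justification of the operator-to-kernel transfer and the interchange of sum/integral: one must verify that the positive integrand $s(s^{2}-E)^{-1/2}G^{(0)}(-s^{2},x,x^{\p})$ is jointly measurable and integrable for a.e.\ $(x,x^{\p})$, which is routine in the finite-interval case (geometric decay of $\sinh(s(R-x^{\p}))/\sinh(sR)$) but requires a slightly more careful dominated-convergence argument in the half-line case to handle behavior as $s\to E^{1/2}$ (integrable square-root singularity) and as $s\to\infty$ (exponential decay from $e^{-s(x^{\p}-x)}$ when $x<x^{\p}$, and logarithmic singularity of $K_{0}$ at $x=x^{\p}$).
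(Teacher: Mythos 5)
Your proposal is correct and follows essentially the same route as the paper: the paper invokes the kernel formula $R^{q}(x,x')=\pi^{-1}\sin(\pi q)\int_0^\infty t^{-q}H(t,x,x')\,dt$ (cited from Krasnoselskii et al.) with $q=1/2$, which after the substitution $t=s^2-E$ is exactly your resolvent representation of $(H+E)^{-1/2}$, and then evaluates the Dirichlet Green's functions at $z=-s^2$ and identifies the half-line integrals with $K_0$ via the same representation $K_0(a)=\int_1^\infty e^{-at}(t^2-1)^{-1/2}\,dt$. Your derivation of the $q=1/2$ identity from the elementary arctangent integral, rather than by citation, is the only (cosmetic) difference.
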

\begin{proof}
If $A$ is positive-type operator with integral kernel $H(t,x,x^{\p})$ such that
\begin{equation}
[(A+t I_J)^{-1}u](x)=\int_{J} dx' \, H(t,x,x^{\p})u(x^{\p}), \quad x\in J, \; t>0,  \lb{3.26}
\end{equation}
$J \subset \bbR$ an appropriate interval, then the operator $A^{-q}$, 
$0 < q < 1$, has the integral kernel $R^q (\cdot,\cdot)$ 
(cf., e.g., \cite[Sect.\ 16]{KZPS76}) 
\begin{equation}
R^q (x,x^{\p})=\frac{\sin(\pi q)}{\pi} \int_{0}^{\infty} dt \, 
t^{-q} H(t,x,x^{\p}), \quad x, x' \in J.   \lb{3.27}
\end{equation}

Applying \eqref{3.27} to $A=H_{(0,R),0,0}^{(0)} + E$, one calculates for 
$0\leq x \leq x^{\p} \leq R$,
\begin{align}
R_{(0,R),0,0}^{(0), 1/2}(-E,x,x^{\p})&=\pi^{-1} \int_0^{\infty} dt \, t^{-1/2} 
G_{(0,R),0,0}^{(0)}(-E-t,x,x^{\p})    \no \\
&=\pi^{-1}\int_0^{\infty} dt \, t^{-1/2} \psi_{0,0}^{(0)}(-E-t,x) 
\psi_{R,0}^{(0)}(-E-t,x^{\p}) \no\\
&=2\pi^{-1}\int_{|E|^{1/2}}^{\infty} ds \, \frac{\sinh(sx)}{(s^2-E)^{1/2}}
\frac{\sinh(s(R-x^{\p}))}{\sinh(sR)}.  \lb{3.28}
\end{align}
For the case $0\leq x^{\p}\leq x$ one simply interchanges the roles of $x$ 
and $x^{\p}$.

Similarly, applying \eqref{3.27} to $A = H_{(0,\infty),0}^{(0)} + E$, one calculates for 
$0\leq x \leq x^{\p}$, 
\begin{align}
R_{(0,\infty),0}^{(0), 1/2}(-E,x,x^{\p})&=\pi^{-1}\int_{0}^{\infty} dt \, t^{-1/2} 
G_{(0,\infty),0}^{(0)}(-E-t,x,x^{\p})    \no\\
&=\pi^{-1}\int_0^{\infty} dt \, t^{-1/2} \psi_{0,0}^{(0)}(-E-t,x) 
\psi_{+}^{{(0)}}(-E-t,x^{\p})   \no \\
&=2\pi^{-1}\int_{E^{1/2}}^{\infty} ds \, \frac{\sinh(sx)}{(s^2-E)^{1/2}}e^{-sx^{\p}}  \no \\
&=\pi^{-1}\int_1^{\infty} ds \, \frac{e^{-E^{1/2}s(x^{\p}-x)}}{(s^2-1)^{1/2}}  
- \pi^{-1}\int_1^{\infty} ds \, \frac{e^{-E^{1/2}s(x+x^{\p})}}{(s^2-1)^{1/2}}.  \lb{3.29}
\end{align}
The last two integrals in \eqref{3.29} can be computed in terms of the modified irregular Bessel function of order zero, $K_0(\cdot)$ 
(see \cite[No.\ 8.432.3]{GR80}) and one obtains 
\begin{equation}
R_{(0,\infty),0}^{(0), 1/2}(-E,x,x^{\p}) 
= \pi^{-1}\big(K_0\big(E^{1/2}(x^{\p}-x)\big) - K_0\big(E^{1/2}(x+x^{\p})\big)\big), 
\quad 0\leq x \leq x^{\p}.   \lb{3.30}
\end{equation}
The case $0\leq x^{\p}\leq x$ is treated analogously. 
\end{proof}

\begin{remark} \lb{r3.2}
The integrand in \eqref{3.28} may be expressed as 
\begin{equation}\lb{3.31}
\frac{\sinh(sx)}{(s^2-E)^{1/2}}\frac{\sinh(s(R-x^{\p}))}{\sinh(sR)} 
= \frac{\sinh(sx)}{(s^2-E)^{1/2}}\bigg(e^{-sx^{\p}} 
- 2\frac{e^{-2Rs}}{1-e^{-2Rs}}\sinh(sx^{\p})\bigg), 
\end{equation}
and hence the integrand in \eqref{3.28} has the integrable majorant
\begin{equation}\lb{3.32}
\frac{\sinh(sx)}{(s^2-E)^{1/2}}\frac{\sinh(s(R-x^{\p}))}{\sinh(sR)} 
\leq \frac{\sinh(sx)}{(s^2-E)^{1/2}}e^{-sx^{\p}}, 
\end{equation}
implying 
\begin{equation}\lb{3.33}
0 \leq R_{(0,R),0,0}^{(0),1/2}(-E,x,x^{\p}) \leq R_{(0,\infty),0}^{(0), 1/2}(-E,x,x^{\p}), 
\quad 0\leq x,x^{\p}\leq R, \; E>0.
\end{equation}

More generally, assuming \eqref{3.8} and employing domain monotonicity for Dirichlet Green's functions (see, e.g., \cite[Sect.\ 1.VII.6]{Do84}), that is, 
\begin{equation}
0 \leq G_{(0,R),0,0}(-E,x,x^{\prime}) \leq 
G_{(0,\infty),0}(-E,x,x^{\prime}), \quad 0\leq x, x' \leq R, \; E>0,  \lb{3.33a}
\end{equation}
and inserting \eqref{3.33a} into the analog of \eqref{3.27} yields
\begin{equation}\lb{3.33b}
0 \leq R_{(0,R),0,0}^q (-E,x,x^{\p}) \leq 
R_{(0,\infty),0}^q (-E,x,x^{\p}), 
\quad 0 \leq x,x^{\p}\leq R, \; E>0, \; 0< q <1.
\end{equation}
\end{remark}

\medskip

Next, we factor $V$ as
\begin{equation}
V(x)=u(x)v(x),\quad v(x)=|V(x)|^{1/2}, \quad u(x)=v(x) \, \sgn(V(x)), \quad 
x \in (0,\infty),    \lb{3.34}
\end{equation}
and denote 
\begin{equation}
V_R(x) = V(x)|_{(0,R)}, \quad v_R(x) = v(x)|_{(0,R)}, \quad 
u_R(x) = u(x)|_{(0,R)}, \quad x \in (0,R).    \lb{3.35}
\end{equation}

\begin{theorem}\lb{t3.3}
Let $R>0$ and $V_R \in L^1((0,R); dx)$ be real-valued, and assume that 
$z \in \rho \big(H_{(0,R),\alpha,\beta, }^{(0)}\big)$. Then
\begin{equation}
\ol{u_R\big(H_{(0,R),\alpha,\beta, }^{(0)}-z I_{(0,R)}\big)^{-1}v_R} 
\in \cB_1\big(L^2((0,R); dx)\big),      \lb{3.36}
\end{equation}
and the determinant 
${\det}_{L^2((0,R); dx)}\Big(I_{(0,R)}+\ol{u_R 
\big(H_{(0,R),\alpha,\beta, }^{(0)}-z I_{(0,R)}\big)^{-1}v_R}\Big)$ is a quotient of Wronskians,
\begin{align}
&{\det}_{L^2((0,R); dx)}\Big(I_{(0,R)} + 
\ol{u_R \big(H_{(0,R),\alpha,\beta, }^{(0)}-z I_{(0,R)}\big)^{-1}v_R}\Big) 
= \frac{W(\psi_{R,\beta}(z,\cdot),\psi_{0,\alpha}(z,\cdot))}
{W(\psi_{R,\beta}^{(0)}(z,\cdot),\psi_{0,\alpha}^{(0)}(z,\cdot))}  \label{03.4} \\
& \quad 
= [\sin(\beta)\psi_{0,\alpha}^{\p}(z,R) + \cos(\beta)\psi_{0,\alpha}(z,R)]
\big[\cos(\alpha)\cos(\beta) z^{-1/2}\sin(z^{1/2}R)   \no \\
& \qquad  - \sin(\alpha - \beta)\cos(z^{1/2}R)  
+ \sin(\alpha)\sin(\beta) z^{1/2}\sin(z^{1/2}R)\big]^{-1}
 \lb{3.37}  \\
& \quad = [\sin(\alpha)\psi_{R,\beta}^{\p}(z,0) + \cos(\alpha)\psi_{R,\beta}(z,0)]
\big[\cos(\alpha)\cos(\beta) z^{-1/2}\sin(z^{1/2}R)   \no \\
& \qquad  - \sin(\alpha - \beta)\cos(z^{1/2}R)  
+ \sin(\alpha)\sin(\beta) z^{1/2}\sin(z^{1/2}R)\big]^{-1}.  \lb{3.38}
\end{align} 
\end{theorem}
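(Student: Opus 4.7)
The plan is to prove the three assertions in the order they appear: the trace class statement \eqref{3.36}, the Wronskian ratio identity \eqref{03.4}, and the explicit evaluations \eqref{3.37}, \eqref{3.38}.

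\textbf{Step 1} (trace class, \eqref{3.36}). Writing $H_0 = H_{(0,R),\alpha,\beta}^{(0)}$ for brevity, I would factor
\begin{equation*}
\overline{u_R(H_0-zI_{(0,R)})^{-1}v_R} = \bigl(\overline{u_R(H_0-zI_{(0,R)})^{-1/2}}\bigr)\bigl(\overline{(H_0-zI_{(0,R)})^{-1/2}v_R}\bigr)
\end{equation*}
and show each factor is Hilbert--Schmidt. After an initial reduction to $z=-E$ with $E > -\inf\sigma(H_0)$ via the resolvent identity, the square root is a positive self-adjoint operator. Since $V_R \in L^1((0,R);dx)$ gives $u_R, v_R \in L^2((0,R);dx)$, and the Green's function $G^{(0)}_{(0,R),\alpha,\beta}(-E,\cdot,\cdot)$ is bounded on $[0,R]^2$, the Hilbert--Schmidt estimate
\begin{equation*}
\bigl\|\overline{(H_0+E)^{-1/2}v_R}\bigr\|_{\cB_2}^2 = \int_0^R G^{(0)}_{(0,R),\alpha,\beta}(-E,x,x)\,|V(x)|\,dx < \infty
\end{equation*}
(and the analogous estimate for $\overline{u_R(H_0+E)^{-1/2}}$) yields trace class membership.

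\textbf{Step 2} (Wronskian ratio \eqref{03.4}, the main obstacle). I would compare both sides as meromorphic functions of $z$ via a Liouville-type argument. Denote the left-hand side by $D(z)$ and the right-hand side by $Q(z)$. By the Birman--Schwinger principle, the zeros of $D$ in $\rho(H_0)$ coincide (with multiplicity) with the eigenvalues of $H_{(0,R),\alpha,\beta}$ in $\rho(H_0)$; on the other hand, $W(\psi_{R,\beta},\psi_{0,\alpha})(z) = 0$ iff $\psi_{R,\beta}$ and $\psi_{0,\alpha}$ are linearly dependent, iff $z$ is an eigenvalue of $H_{(0,R),\alpha,\beta}$, so the zero sets of $D$ and $Q$ coincide. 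Likewise, the poles of $Q$ (from $W(\psi^{(0)}_{R,\beta},\psi^{(0)}_{0,\alpha})$ vanishing) occur precisely at eigenvalues of $H_0$, matching the poles of $D$ arising from the free resolvent. Separated self-adjoint boundary conditions produce simple eigenvalues in both spectra, so orders agree and $D/Q$ extends to an entire function of $z$. An asymptotic analysis as $\Re(z) \to -\infty$ shows $D(z) \to 1$ (from $\bigl\|\overline{u_R(H_0-zI_{(0,R)})^{-1}v_R}\bigr\|_{\cB_1} \to 0$) and $Q(z) \to 1$ (from the Volterra integral equations \eqref{3.16a}, \eqref{3.16} together with Gronwall-type bounds showing $\psi_{0,\alpha}-\psi^{(0)}_{0,\alpha}$ and $\psi_{R,\beta}-\psi^{(0)}_{R,\beta}$ are of strictly lower order in $|z|^{1/2}$ than the leading behavior of the Wronskian). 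Liouville's theorem then forces $D \equiv Q$. An alternative, more explicit route is the direct Jost--Pais reduction of \cite{GM03}, which exploits the separable form of $G^{(0)}_{(0,R),\alpha,\beta}$ on each triangle $\{x<x'\}$, $\{x>x'\}$ to reduce $\det(I+K)$ to a $2\times 2$ determinant via Sylvester's identity.

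\textbf{Step 3} (explicit formulas \eqref{3.37}, \eqref{3.38}). Since $W(\psi_{R,\beta}(z,\cdot),\psi_{0,\alpha}(z,\cdot))$ is $x$-independent, I would evaluate at $x=R$; \eqref{3.18} together with \eqref{3.14} give $\psi_{R,\beta}(z,R) = \sin(\beta)$ and $\psi'_{R,\beta}(z,R) = -\cos(\beta)$, producing the factor $\sin(\beta)\psi'_{0,\alpha}(z,R) + \cos(\beta)\psi_{0,\alpha}(z,R)$ in the numerator of \eqref{3.37}. The common denominator is then read off directly from \eqref{3.15} (using $\sin(\beta-\alpha) = -\sin(\alpha-\beta)$). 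Formula \eqref{3.38} follows by the same argument, evaluating instead at $x=0$ using $\psi_{0,\alpha}(z,0) = -\sin(\alpha)$ and $\psi'_{0,\alpha}(z,0) = \cos(\alpha)$. The principal obstacle throughout is Step 2: while the zero/pole matching and the asymptotic normalization are clear in outline, a clean rigorous execution requires careful multiplicity tracking and is most efficiently carried out via the Jost--Pais machinery of \cite{GM03}.
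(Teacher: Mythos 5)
Your Steps 1 and 3 are fine: the factorization through $\big(H_{(0,R),\alpha,\beta}^{(0)}+E\big)^{-1/2}$ for the trace class claim \eqref{3.36} is standard and correct, and your endpoint evaluations of the Wronskian reproduce exactly how the paper derives \eqref{3.37} and \eqref{3.38} from \eqref{03.4} via \eqref{3.14}, \eqref{3.15}, and \eqref{3.18}.

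The problem is Step 2, and as written it contains a genuine gap. Knowing that $D/Q$ is entire and that $D(z)\to 1$, $Q(z)\to 1$ as $\Re(z)\to-\infty$ does not force $D\equiv Q$ by Liouville: an entire function can tend to $1$ along a ray without being constant. To close the argument you would additionally need a growth estimate (both $D$ and $Q$ are entire of order $1/2$ in $z$, and one must check that the quotient inherits finite order, e.g., via minimum-modulus bounds for the canonical product of $Q$), followed by a Phragm\'en--Lindel\"of argument in a sector containing the positive real axis, where no direct asymptotics are available. The zero/pole matching also needs more care than ``separated boundary conditions produce simple eigenvalues'': one must handle the possibility that an eigenvalue of $H_{(0,R),\alpha,\beta}$ coincides with one of $H_{(0,R),\alpha,\beta}^{(0)}$, in which case a zero and a pole of $Q$ partially cancel and the order bookkeeping changes. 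The paper avoids all of this by taking precisely the second route you mention only in passing: it applies the semi-separable kernel theorem of \cite{GM03} directly, solves the Volterra equations \eqref{3.41}--\eqref{3.42} to identify $\widehat{f}_R^{(1)}=-u_R\psi_{R,\beta}$ and $\widehat{f}_R^{(2)}=-u_R\psi_{0,\alpha}/W(\psi_{R,\beta}^{(0)},\psi_{0,\alpha}^{(0)})$, reduces the Fredholm determinant to the $2\times 2$ determinants \eqref{3.45c}, \eqref{3.45d}, and then verifies by direct substitution of \eqref{3.16a}, \eqref{3.16} into the Wronskian that these equal $W(\psi_{R,\beta}(z,\cdot),\psi_{0,\alpha}(z,\cdot))/W(\psi_{R,\beta}^{(0)}(z,\cdot),\psi_{0,\alpha}^{(0)}(z,\cdot))$. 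So your acknowledged fallback is in fact the paper's proof, but your primary route is not complete as it stands.
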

\begin{proof}
To get \eqref{3.37} and \eqref{3.38}, observe that
\begin{align}
\frac{W(\psi_{R,\beta}(z,\cdot),\psi_{0,\alpha}(z,\cdot))}
{W(\psi_{R,\beta}^{(0)}(z,\cdot),\psi_{0,\alpha}^{(0)}(z,\cdot))}&=\frac{\psi_{R,\beta}(z,x)\psi_{0,\alpha}^{\prime}(z,x)-\psi_{R,\beta}^{\prime}(z,x)\psi_{0,\alpha}(z,x)}{W(\psi_{R,\beta}^{(0)}(z,\cdot),\psi_{0,\alpha}^{(0)}(z,\cdot))}, \label{03.10}\\
&\qquad \qquad \ \qquad \quad x\in [0,R], \ z \in \rho \big(H_{(0,R),\alpha,\beta, }^{(0)}\big).\no
\end{align}
In view of \eqref{3.14}, \eqref{3.15}, and \eqref{3.18}, the identity \eqref{3.37} (resp., \eqref{3.38}) is given by evaluating \eqref{03.10} at $x=R$ (resp. $x=0$).

In order to verify \eqref{03.4}, let $x, x' \in [0,R]$ and $z \in \rho \big(H_{(0,R),\alpha,\beta, }^{(0)}\big)$, 
$\Im(z^{1/2}) \geq 0$. Define the functions
\begin{align}
\begin{split} 
f_R^{(1)}(z,x)&=-u_R(x)\psi_{R,\beta}^{(0)}(z,x),  \quad
f_R^{(2)}(z,x)=\frac{-u_R(x)\psi_{0,\alpha}^{(0)}(z,x)}
{W(\psi_{R,\beta}^{(0)},\psi_{0,\alpha}^{(0)})},      \\
g_R^{(1)}(z,x)&=\frac{v_R(x)\psi_{0,\alpha}^{(0)}(z,x)}
{W(\psi_{R,\beta}^{(0)},\psi_{0,\alpha}^{(0)})},  \quad \;\;\,   
g_R^{(2)}(z,x)=v_R(x)\psi_{R,\beta}^{(0)}(z,x),       \lb{3.39}
\end{split} 
\end{align}
where, for brevity, we have written
\begin{equation}\label{03.12}
W(\psi_{R,\beta}^{(0)},\psi_{0,\alpha}^{(0)})=W(\psi_{R,\beta}^{(0)}(z,\cdot),\psi_{0,\alpha}^{(0)}(z,\cdot)),
\end{equation}
and introduce the Volterra integral kernel
\begin{align}
\begin{split}
H_R(z,x,x^{\p})&=f_R^{(1)}(z,x)g_R^{(1)}(z,x^{\p})-f_R^{(2)}(z,x)g_R^{(2)}(z,x^{\p})    \\
&=u_R(x)\frac{\sin(z^{1/2}(x-x^{\p}))}{z^{1/2}}v_R(x^{\p}).    \lb{3.40}
\end{split}  
\end{align}
In addition, one introduces the functions $\widehat{f}_R^{(j)}(z,\cdot)$, $j=1,2$, as solutions 
of the Volterra integral equations
\begin{align}
\widehat{f}_R^{(1)}(z,x)&=f_R^{(1)}(z,x)-\int_x^Rdx^{\p}H(z,x,x^{\p}) 
\widehat{f}_R^{(1)}(z,x^{\p}),      \lb{3.41} \\
\widehat{f}_R^{(2)}(z,x)&=f_R^{(2)}(z,x)+\int_0^xdx^{\p}H(z,x,x^{\p}) 
\widehat{f}_R^{(2)}(z,x^{\p}).      \lb{3.42}
\end{align}

Comparing \eqref{3.41} to \eqref{3.16} and \eqref{3.42} to \eqref{3.16a}, one infers 
\begin{align}
\widehat{f}_R^{(1)}(z,x)&=-u_R(x)\psi_{R,\beta}(z,x)   \lb{3.43}\\
\widehat{f}_R^{(2)}(z,x)& = \frac{-u_R(x)\psi_{0,\alpha}(z,x)}{W(\psi_{R,\beta}^{(0)}(z,\cdot),\psi_{0,\alpha}^{(0)}(z,\cdot))},  \lb{3.43b}\\
& \qquad \qquad \qquad \qquad \quad \quad x \in [0,R].\no
\end{align}
Introducing 
\begin{equation}
U_R(z,x) = \begin{pmatrix}
1-\int_x^Rdx^{\p}g_R^{(1)}(z,x^{\p})\widehat{f}_R^{(1)}(z,x^{\p}) 
& \int_0^xdx^{\p}g_R^{(1)}(z,x^{\p})\widehat{f}_R^{(2)}(z,x^{\p}) \\
\int_x^Rdx^{\p}g_R^{(2)}(z,x^{\p})\widehat{f}_R^{(1)}(z,x^{\p})  
& 1-\int_0^xdx^{\p}g_R^{(2)}(z,x^{\p})\widehat{f}_R^{(2)}(z,x^{\p})
\end{pmatrix},    \lb{3.44}
\end{equation}
then applying \cite[Theorem 3.2]{GM03}, one obtains 
\begin{align}
{\det}_{L^2((0,R); dx)} \Big(I_{(0,R)}
+ \ol{u_R \big(H_{(0,R),\alpha,\beta}^{(0)}-z\big)^{-1}v_R}\Big)
&= {\det}_{\C^2} (U_R (z,0))\lb{3.45}\\
&={\det}_{\C^2} (U_R (z,R)).\lb{3.45b}
\end{align}
Using \eqref{3.39} and \eqref{3.43}--\eqref{3.44}, one readily verifies 
\begin{align}
{\det}_{\C^2} (U_R (z,0))&= 1+\int_0^Rdx^{\p} \frac{V(x^{\p})\psi_{0,\alpha}^{(0)}(z,x^{\p})\psi_{R,\beta}(z,x^{\p})}{W(\psi_{R,\beta}^{(0)}(z,\cdot),\psi_{0,\alpha}^{(0)}(z,\cdot))}\lb{3.45c}\\
{\det}_{\C^2} (U_R (z,R))&= 1+\int_0^Rdx^{\p} \frac{V(x^{\p})\psi_{R,\beta}^{(0)}(z,x^{\p})\psi_{0,\alpha}(z,x^{\p})}{W(\psi_{R,\beta}^{(0)}(z,\cdot),\psi_{0,\alpha}^{(0)}(z,\cdot))}.\lb{3.45d}
\end{align}

By definition,
\begin{equation}\lb{03.1}
W(\psi_{R,\beta}(z,\cdot),\psi_{0,\alpha}(z,\cdot))=\psi_{R,\beta}(x)\psi_{0,\alpha}^{\p}(x)-\psi_{R,\beta}^{\p}(x)\psi_{0,\alpha}(x), \qquad x\in [0,R].
\end{equation}
Substitution of \eqref{3.16a} and \eqref{3.16} into the r.h.s. of \eqref{03.1} leads one to the following lengthy expression for the constant $W(\psi_{R,\beta},\psi_{0,\alpha})$:
\begin{align}
&W(\psi_{R,\beta}(z,\cdot),\psi_{0,\alpha}(z,\cdot))\nonumber\\
&= \bigg[ \psi_{R,\beta}^{(0)\p}(z,x)-z^{-1/2}\int_x^Rdx^{\p}\sin(z^{1/2}(x-x^{\p}))V(x^{\p})\psi_{R,\beta}(z,x^{\p}) \bigg]\no\\
&\quad \quad \times \bigg[\psi_{0,\alpha}^{(0)\p}(z,x)+\int_0^xdx^{\p} \cos(z^{1/2}(x-x^{\p}))V(x^{\p})\psi_{0,\alpha}(z,x^{\p}) \bigg]\no\\
&\quad -\bigg[ \psi_{R,\beta}^{(0)\p}(z,x)-\int_x^Rdx^{\p}\cos(z^{1/2}(x-x^{\p}))V(x^{\p})\psi_{R,\beta}(z,x^{\p}) \bigg]\no\\
&\qquad \quad \times \bigg[ \psi_{0,\alpha}^{(0)}(z,x)+z^{-1/2}\int_0^xdx^{\p}\sin(z^{1/2}(x-x^{\p}))V(x^{\p})\psi_{0,\alpha}(z,x^{\p}) \bigg],\label{03.2}\\
&\hspace*{9.3cm} x\in [0,R].\no
\end{align}
Evaluation of the r.h.s. of \eqref{03.2} at $x=0$ and $x=R$ yields (after straightforward manipulation)
\begin{align}
W(\psi_{R,\beta}(z,\cdot),\psi_{0,\alpha}(z,\cdot))&=W(\psi_{R,\beta}^{(0)},\psi_{0,\alpha}^{(0)})+\int_0^Rdx^{\p}V(x^{\p})\psi_{0,\alpha}^{(0)}(z,x^{\p})\psi_{R,\beta}(z,x^{\p})\no\\
&=W(\psi_{R,\beta}^{(0)},\psi_{0,\alpha}^{(0)})+\int_0^Rdx^{\p}V(x^{\p})\psi_{R,\beta}^{(0)}(z,x^{\p})\psi_{0,\alpha}(z,x^{\p}),\no
\end{align}
again making use of the abbreviation \eqref{03.12}, and it follows from \eqref{3.45c} and \eqref{3.45d} that
\begin{equation}\lb{03.3}
{\det}_{\C^2} (U_R (z,0))={\det}_{\C^2} (U_R (z,R))=\frac{W(\psi_{R,\beta}(z,\cdot),\psi_{0,\alpha}(z,\cdot))}{W(\psi_{R,\beta}^{(0)}(z,\cdot),\psi_{0,\alpha}^{(0)}(z,\cdot))}.
\end{equation}
The identity \eqref{03.4} then follows from \eqref{03.3} and either \eqref{3.45} or \eqref{3.45b}.
\end{proof}

The corresponding half-line result then reads as follows: 

\begin{theorem} \lb{t3.4}
Assume that $V \in L^1((0,\infty); dx)$ is real-valued and suppose that  
$z \in \rho \big(H_{(0,\infty),\alpha}^{(0)}\big)$. Then
\begin{equation}
\ol{u \big(H_{(0,\infty),\alpha}^{(0)} - z I_{(0,\infty)}\big)^{-1}v} 
\in \cB_1\big(L^2((0,\infty); dx)\big),      \lb{3.46}
\end{equation}
and the determinant 
${\det}_{L^2((0,\infty);dx)}\Big(I_{(0,\infty)} 
+ \ol{u \big(H_{(0,\infty),\alpha}^{(0)}-z I_{(0,\infty)}\big)^{-1}v}\Big)$ is a quotient of Wronskians,
\begin{align}
&{\det}_{L^2((0,\infty;dx)}\Big(I_{(0,\infty)} 
+ \ol{u \big(H_{(0,\infty),\alpha}^{(0)}-z I_{(0,\infty)}\big)^{-1}v}\Big)
=\frac{W(\psi_{+}(z,\cdot),\psi_{0,\alpha}(z,\cdot))}
{W(\psi_{+}^{(0)}(z,\cdot),\psi_{0,\alpha}^{(0)}(z,\cdot))}  \label{03.11} \\
&\quad=\frac{\sin(\alpha)\psi_+^{\p}(z,0) + \cos(\alpha)\psi_+(z,0)}
{\cos(\alpha) + iz^{1/2}\sin(\alpha)}.     \lb{3.47}
\end{align}
\end{theorem}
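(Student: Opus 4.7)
The plan is to mirror the proof of Theorem~\ref{t3.3} verbatim on the half-line, replacing $R$ by $+\infty$, the Dirichlet-type outer Weyl solution $\psi_{R,\beta}(z,\cdot)$ by the Jost/$L^2$-solution $\psi_+(z,\cdot)$, and the Wronskian identity \eqref{3.15}/\eqref{3.18} by \eqref{3.20}/\eqref{3.23}. The argument splits into the trace class claim \eqref{3.46} and the determinant/Wronskian identity \eqref{03.11}.

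First I would establish \eqref{3.46}. Since $V\in L^1((0,\infty);dx)$, we have $u,v\in L^2((0,\infty);dx)$. For general $\alpha\in[0,\pi)$, Krein's formula \eqref{B39} (Lemma~\ref{l3.2a}) with $\wti\alpha=\alpha$, $\alpha\rightsquigarrow 0$ expresses $\big(H_{(0,\infty),\alpha}^{(0)}-zI_{(0,\infty)}\big)^{-1}$ as $\big(H_{(0,\infty),0}^{(0)}-zI_{(0,\infty)}\big)^{-1}$ plus an explicit rank-one operator with kernel proportional to $\psi_+^{(0)}(z,x)\psi_+^{(0)}(z,x')$; conjugating by $u$ and $v$ yields a rank-one, hence trace class, contribution (since $u\psi_+^{(0)}(z,\cdot),\,v\psi_+^{(0)}(z,\cdot)\in L^2((0,\infty);dx)$ for $\Im(z^{1/2})>0$). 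It therefore suffices to treat $\alpha=0$. Writing
\begin{equation*}
\ol{u\big(H_{(0,\infty),0}^{(0)}-zI_{(0,\infty)}\big)^{-1}v}
=\bigl[\ol{u\,(H_{(0,\infty),0}^{(0)}+E)^{-1/2}}\bigr]\bigl[(H_{(0,\infty),0}^{(0)}+E)(H_{(0,\infty),0}^{(0)}-zI_{(0,\infty)})^{-1}\bigr]\bigl[\ol{(H_{(0,\infty),0}^{(0)}+E)^{-1/2}v}\bigr]
\end{equation*}
for $E>0$ with $-E\in\rho\big(H_{(0,\infty),0}^{(0)}\big)$, and using the explicit $K_0$-kernel from Lemma~\ref{l3.1}(ii), one estimates the Hilbert--Schmidt norm by
\begin{equation*}
\big\|\ol{(H_{(0,\infty),0}^{(0)}+E)^{-1/2}v}\big\|_{\cB_2}^2
\le \pi^{-2}\int_0^\infty|v(x')|^2\int_0^\infty K_0\bigl(E^{1/2}|x-x'|\bigr)^2\,dx\,dx'
\le C(E)\|v\|_{L^2}^2<\infty,
\end{equation*}
since $K_0$ is locally square-integrable (logarithmic singularity) and exponentially decaying at infinity; the factor $u(\cdot)$ is treated identically. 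Thus the outer factors are Hilbert--Schmidt and the middle factor is bounded, giving \eqref{3.46}.

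For the determinant identity, I would introduce the half-line analogs of \eqref{3.39},
\begin{align*}
f^{(1)}(z,x)&=-u(x)\psi_+^{(0)}(z,x), & f^{(2)}(z,x)&=\frac{-u(x)\psi_{0,\alpha}^{(0)}(z,x)}{W(\psi_+^{(0)},\psi_{0,\alpha}^{(0)})}, \\
g^{(1)}(z,x)&=\frac{v(x)\psi_{0,\alpha}^{(0)}(z,x)}{W(\psi_+^{(0)},\psi_{0,\alpha}^{(0)})}, & g^{(2)}(z,x)&=v(x)\psi_+^{(0)}(z,x),
\end{align*}
so that the Volterra kernel $H(z,x,x')=f^{(1)}(z,x)g^{(1)}(z,x')-f^{(2)}(z,x)g^{(2)}(z,x')=u(x)\,z^{-1/2}\sin(z^{1/2}(x-x'))v(x')$ is exactly the kernel of $u\,(\tau^{(0)}-z)^{-1}v$ (written via the free propagator). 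The Volterra integral equations
\begin{align*}
\wti f^{(1)}(z,x)&=f^{(1)}(z,x)-\int_x^\infty H(z,x,x')\wti f^{(1)}(z,x')\,dx', \\
\wti f^{(2)}(z,x)&=f^{(2)}(z,x)+\int_0^x H(z,x,x')\wti f^{(2)}(z,x')\,dx',
\end{align*}
have, by comparison with \eqref{3.21}, the explicit solutions $\wti f^{(1)}(z,x)=-u(x)\psi_+(z,x)$ and $\wti f^{(2)}(z,x)=-u(x)\psi_{0,\alpha}(z,x)/W(\psi_+^{(0)},\psi_{0,\alpha}^{(0)})$. Applying the Gesztesy--Makarov formula \cite[Theorem~3.2]{GM03} on the half-line $(0,\infty)$ (whose hypotheses are met because $V\in L^1$ provides the required summability of $f^{(j)}g^{(k)}$), one obtains
\begin{equation*}
{\det}_{L^2((0,\infty);dx)}\!\Big(I_{(0,\infty)}+\ol{u\,(H_{(0,\infty),\alpha}^{(0)}-z I_{(0,\infty)})^{-1}v}\Big)
=1+\int_0^\infty\!\frac{V(x')\psi_+^{(0)}(z,x')\psi_{0,\alpha}(z,x')}{W(\psi_+^{(0)},\psi_{0,\alpha}^{(0)})}\,dx',
\end{equation*}
and inserting the integral equation \eqref{3.21} for $\psi_+$ into the definition of the Wronskian $W(\psi_+,\psi_{0,\alpha})$ (as in \eqref{03.1}--\eqref{03.2}, but with the outer endpoint sent to $+\infty$, using the boundedness/decay of $\psi_+$ and $\psi_{0,\alpha}$ for $\Im(z^{1/2})>0$) shows that this equals $W(\psi_+(z,\cdot),\psi_{0,\alpha}(z,\cdot))/W(\psi_+^{(0)}(z,\cdot),\psi_{0,\alpha}^{(0)}(z,\cdot))$, proving \eqref{03.11}. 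Finally, \eqref{3.47} follows by evaluating the Wronskians at $x=0$ using \eqref{3.20} and \eqref{3.23}.

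The main obstacle is the trace class step: the half-line is non-compact, so one cannot appeal to any compactness of the interval as in Theorem~\ref{t3.3}; the argument genuinely relies on the explicit $K_0$-kernel of Lemma~\ref{l3.1}(ii) together with $v,u\in L^2((0,\infty);dx)$. A minor secondary point is justifying the use of \cite[Theorem~3.2]{GM03} on a semi-infinite interval, which requires checking the integrability of the relevant products $f^{(j)}(z,\cdot)g^{(k)}(z,\cdot)$ on $(0,\infty)$; this follows from $V\in L^1$ combined with the exponential decay of $\psi_+^{(0)}(z,\cdot)$ for $\Im(z^{1/2})>0$ (balanced against the exponential growth of $\psi_{0,\alpha}^{(0)}(z,\cdot)$, which is canceled by $W(\psi_+^{(0)},\psi_{0,\alpha}^{(0)})^{-1}$ only up to boundedness, so one separately uses $u,v\in L^2$ and Cauchy--Schwarz).
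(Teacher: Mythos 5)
Your algebraic skeleton (the choice of $f_j,g_j$, the Volterra equations, the identification $\widehat f_1=-u\psi_+$, the computation of $W(\psi_+,\psi_{0,\alpha})$ via \eqref{3.21}, and the evaluation at $x=0$ giving \eqref{3.47}) coincides with the paper's, and your separate proof of \eqref{3.46} by factoring through $\big(H_{(0,\infty),0}^{(0)}+E\big)^{-1/2}$ with the $K_0$-kernel of Lemma \ref{l3.1}\,(ii), plus Krein's formula for general $\alpha$, is a legitimate (and somewhat more self-contained) alternative to the paper, which obtains \eqref{3.46} only as a byproduct of the determinant machinery and a limiting argument.

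The genuine gap is in your direct application of \cite[Theorem 3.2]{GM03} on $(0,\infty)$ for arbitrary real-valued $V\in L^1((0,\infty);dx)$. The hypotheses of that theorem require integrability on the whole interval of \emph{all four} products $f^{(j)}g^{(k)}$, $j,k\in\{1,2\}$, and the offending one is
\begin{equation*}
f^{(2)}(z,x)\,g^{(1)}(z,x)
=-\,\frac{V(x)\,\psi_{0,\alpha}^{(0)}(z,x)^2}{W\big(\psi_+^{(0)}(z,\cdot),\psi_{0,\alpha}^{(0)}(z,\cdot)\big)^2}\,,
\end{equation*}
which grows like $|V(x)|\,e^{2\Im(z^{1/2})x}$ as $x\to\infty$ and is \emph{not} in $L^1((0,\infty);dx)$ for general $V\in L^1$ (take, e.g., $V(x)=e^{-x}(1+x^2)^{-1}$). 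Your proposed repair---``one separately uses $u,v\in L^2$ and Cauchy--Schwarz''---does not work, because $u\,\psi_{0,\alpha}^{(0)}(z,\cdot)$ need not lie in $L^2((0,\infty);dx)$ either: the exponential growth of $\psi_{0,\alpha}^{(0)}(z,\cdot)$ defeats any square-integrable decay of $u$, and the constant $W(\psi_+^{(0)},\psi_{0,\alpha}^{(0)})^{-1}$ cannot absorb an $x$-dependent growth. This is exactly why the paper first proves \eqref{03.11} under the \emph{temporary assumption} that $V$ has compact support (where all hypotheses of \cite[Theorem 3.2]{GM03} hold trivially) and then removes that assumption by a separate approximation argument: cutoffs $V_{(\varepsilon)}=V\chi_\varepsilon$, trace-norm convergence $K^{(0)}_{\alpha,(\varepsilon)}(z)\to K^{(0)}_\alpha(z)$ in $\cB_1\big(L^2((0,\infty);dx)\big)$ (hence convergence of the determinants), and pointwise convergence $\psi_{+,(\varepsilon)}(z,0)\to\psi_+(z,0)$, $\psi_{+,(\varepsilon)}'(z,0)\to\psi_+'(z,0)$ of the Jost data (cf.\ \eqref{130}--\eqref{131}). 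Your proposal contains no substitute for this limiting step, so as written it only establishes \eqref{03.11} for compactly supported (or sufficiently rapidly decaying) $V$.
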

\begin{proof}
The equality \eqref{3.47} is a trivial consequence of \eqref{3.20} and \eqref{3.23}.  Thus, we proceed to verify \eqref{3.11}.
Let $x, x' \in [0,\infty)$ and $z \in \rho\big(H_{(0,\infty),\alpha}^{(0)}\big)$, 
$\Im(z^{1/2}) \geq 0$. Define the functions
\begin{align}
\begin{split}
f_1(z,x)&=-u(x)\psi_+^{(0)}(z,x),  \qquad \qquad \quad 
f_2(z,x)=\frac{-u(x)\psi_{0,\alpha}^{(0)}(z,x)}
{W(\psi_+^{(0)}(z,\cdot),\psi_{0,\alpha}^{(0)}(z,\cdot))},     \\
g_1(z,x)&=\frac{v(x)\psi_{0,\alpha}^{(0)}(z,x)}{W(\psi_+^{(0)}(z,\cdot),\psi_{0,\alpha}^{(0)}(z,\cdot))},   \quad \;\; 
g_2(z,x)=v(x)\psi_+^{(0)}(z,x),     \lb{3.48}
\end{split} 
\end{align}
and introduce the Volterra integral kernel
\begin{align}
\begin{split} 
H(z,x,x^{\p})&=f_1(z,x)g_1(z,x^{\p})-f_2(z,x)g_2(z,x^{\p})     \\
&=u(x)\frac{\sin(z^{1/2}(x-x^{\p}))}{z^{1/2}}v(x^{\p}).   \lb{3.49}  
\end{split} 
\end{align}
Introducing $\widehat{f}_1$, $\widehat{f}_2$ as the solutions of the Volterra integral 
equations
\begin{align}
\begin{split} 
\widehat{f}_1(z,x)&=f_1(z,x)-\int_x^{\infty}dx^{\p}H(z,x,x^{\p})\widehat{f}_1(z,x^{\p}),    \\
\widehat{f}_2(z,x)&=f_2(z,x)+\int_0^xdx^{\p}H(z,x,x^{\p})\widehat{f}_2(z,x^{\p}),  \lb{3.50}
\end{split} 
\end{align}
a comparison with \eqref{3.21} yields
\begin{equation}\lb{3.51}
\widehat{f}_1(z,x)=-u(x)\psi_+(z,x).
\end{equation}
Assuming temporarily that $V$ has compact support, one concludes 
that $\widehat{f}_1\in L^2((0,\infty);dx)$ and applying \cite[Theorem 3.2]{GM03} results in 
\begin{equation}\lb{3.52}
{\det}_{L^2((0,\infty;dx)} \Big(I_{(0,\infty)}
+ \ol{u \big(H_{(0,\infty),\alpha}^{(0)}-z\big)^{-1}v}\Big)
= {\det}_{\C^2}(U(z,0)),
\end{equation}
where
\begin{equation}
U(z,x) = \begin{pmatrix}
1-\int_x^{\infty}dx^{\p}g_1(z,x^{\p})\widehat{f}_1(z,x^{\p}) & \int_0^xdx^{\p}g_1(z,x^{\p})\widehat{f}_2(z,x^{\p})\\
\int_x^{\infty}dx^{\p}g_2(z,x^{\p})\widehat{f}_1(z,x^{\p}) & 1-\int_0^xdx^{\p}g_2(z,x^{\p})\widehat{f}_2(z,x^{\p})\end{pmatrix}.    \lb{3.53}
\end{equation}

Using \eqref{3.48} and \eqref{3.51}, equations \eqref{3.52} and \eqref{3.53} immediately yield 
\begin{equation}\label{03.5}
{\det}_{\C^2}(U(z,0))=1+\int_0^{\infty}dx^{\p}\frac{V(x^{\p})\psi_{0,\alpha}^{(0)}(z,x^{\p})\psi_+(z,x^{\p})}{W(\psi_+^{(0)}(z,\cdot),\psi_{0,\alpha}^{(0)}(z,\cdot))}.
\end{equation}
By definition,
\begin{equation}\label{03.6}
W(\psi_+(z,\cdot),\psi_{0,\alpha}(z,\cdot))=\psi_+(z,x)\psi_{0,\alpha}^{\p}(z,x)-\psi_+^{\p}(z,x)\psi_{0,\alpha}(z,x), \qquad x\in [0,\infty).
\end{equation}
Substitution of \eqref{3.21} and \eqref{3.22} into the r.h.s. of \eqref{03.6} provides
\begin{align}
&W(\psi_+(z,\cdot),\psi_{0,\alpha}(z,\cdot))\nonumber\\
&= \bigg[\psi_+^{(0)}(z,x)-z^{-1/2}\int_x^{\infty}dx^{\p}\sin(z^{1/2}(x-x^{\p}))V(x^{\p})\psi_+(z,x^{\p}) \bigg] \nonumber\\
&\quad \quad  \times \bigg[\psi_{0,\alpha}^{(0)\p}(z,x)+\int_0^xdx^{\p}\cos(z^{1/2}(x-x^{\p}))V(x^{\p})\psi_{0,\alpha}(z,x^{\p}) \bigg]\nonumber\\
&\quad -\bigg[\psi_+^{(0)\p}(z,x)-\int_x^{\infty}dx^{\p}\cos(z^{1/2}(x-x^{\p}))V(x^{\p})\psi_+(z,x^{\p}) \bigg]\nonumber\\
&\qquad \quad \times\bigg[\psi_{0,\alpha}^{(0)}(z,x)+z^{-1/2}\int_0^{x}dx^{\p}\sin(z^{1/2}(x-x^{\p}))V(x^{\p})\psi_{0,\alpha}(z,x^{\p}) \bigg],\label{03.7}\\
&\hspace*{9.2cm} x\in [0,\infty).\nonumber
\end{align}
Evaluation of the r.h.s. of \eqref{03.7} at $x=0$ yields (after straightforward manipulation)
\begin{equation}\label{03.8}
W(\psi_+(z,\cdot),\psi_{0,\alpha}(z,\cdot))=W(\psi_+^{(0)}(z,\cdot),\psi_{0,\alpha}^{(0)}(z,\cdot))+\int_0^{\infty}dx^{\p}V(x^{\p})\psi_{0,\alpha}^{(0)}(z,x^{\p})\psi_+(z,x^{\p}),
\end{equation}
and it follows from \eqref{03.5} and \eqref{03.8} that
\begin{equation}\label{03.9}
{\det}_{\C^2}(U(z,0))=\frac{W(\psi_+(z,\cdot),\psi_{0,\alpha}(z,\cdot))}{W(\psi_+^{(0)}(z,\cdot),\psi_{0,\alpha}^{(0)}(z,\cdot))}.
\end{equation}
The identities \eqref{3.52} and \eqref{03.9} yield \eqref{03.11} under the additional assumption that $V$ has compact support.  
Next we now show how to remove this additional assumption.

Let $\chi_{\varepsilon}$ denote a smooth cutoff function of the type
\begin{equation}\lb{125}
0\leq \chi\leq 1, \quad \chi(x)=\begin{cases} 
1, & x\in [0,1], \\
0 & x\geq 2, \end{cases} 
\quad \chi_{\varepsilon}(x)=\chi(\varepsilon x), \ \varepsilon>0, 
\end{equation}
and
\begin{equation}
u_{(\varepsilon)}=u\chi_{\varepsilon}, \quad v_{(\varepsilon)}=v\chi_{\varepsilon}, 
\quad V_{(\varepsilon)}=V\chi_{\varepsilon}.
\end{equation}
For convenience, we denote
\begin{equation}
K_{\alpha,(\varepsilon)}^{(0)}(z) 
= \ol{u_{(\varepsilon)} \big(H_{(0,\infty),\alpha}^{(0)}-z I_{(0,\infty)}\big)^{-1}v_{(\varepsilon)}}, \quad 
K_{\alpha}^{(0)}(z)=\ol{u \big(H_{(0,\infty),\alpha}^{(0)} 
- z I_{(0,\infty)}\big)^{-1}v}.  
\end{equation}
The integral kernel $G_{(0,\infty),\alpha}^{(0)}(z,x,x^{\p})$
of $\big(H_{(0,\infty),\alpha}^{(0)}-z I_{(0,\infty)}\big)^{-1}$ can be expressed in terms of $G_{(0,\infty),0}^{(0)}(z,x,x^{\p})$, the integral kernel of  the resolvent of the free half-line Dirichlet  operator, that is, 
$\big(H_{(0,\infty),0}^{(0)}-z I_{(0,\infty)}\big)^{-1}$ as follows (cf., Lemma \ref{l3.2a}),
\begin{equation}\lb{126}
G_{(0,\infty),\alpha}^{(0)}(z,x,x^{\p})
=G_{(0,\infty),0}^{(0)}(z,x,x^{\p})
- \frac{\sin(\alpha)e^{iz^{1/2}x}e^{iz^{1/2}x^{\p}}}{\cos(\alpha)+iz^{1/2}\sin(\alpha)}, 
\quad x, x' \geq 0.
\end{equation}
Therefore,
\begin{align}
K_{\alpha}^{(0)}(z)&=\ol{u \big(H_{(0,\infty),\alpha}^{(0)}
- z I_{(0,\infty)}\big)^{-1}v}  \no \\
&=\ol{u \big(H_{(0,\infty),0}^{(0)}-z I_{(0,\infty)}\big)^{-1}v}    \no \\ 
& \quad - \frac{\sin(\alpha)}{\cos(\alpha)+iz^{1/2}\sin(\alpha)} 
\big(v \ol{e^{iz^{1/2}\cdot}},\cdot\big)_{L^2((0,\infty);dx)} 
ue^{iz^{1/2}\cdot},  
\end{align}
and 
\begin{align}
K_{\alpha,(\varepsilon)}^{(0)}(z)&
=\ol{u_{(\varepsilon)} \big(H_{(0,\infty),\alpha}^{(0)} 
-z I_{(0,\infty)}\big)^{-1}v_{(\varepsilon)}}  \no \\
&=\ol{u_{(\varepsilon)} \big(H_{(0,\infty),0}^{(0)}-z I_{(0,\infty)}\big)^{-1} 
v_{(\varepsilon)}}   \no \\
& \quad -\frac{\sin(\alpha)}{\cos(\alpha)+iz^{1/2}\sin(\alpha)} \big(v_{(\varepsilon)}\ol{e^{iz^{1/2}\cdot}},\cdot\big)_{L^2((0,\infty);dx)} 
u_{(\varepsilon)}e^{iz^{1/2}\cdot}.   
\end{align}
Thus,
\begin{align}
& \big\|K_{\alpha}^{(0)}(z)-K_{\alpha,(\varepsilon)}^{(0)}(z)\big\|_{\cB_1(L^2((0,\infty);dx))}  
\no \\
&\quad \leq \|K_0^{(0)}(z)-K_{0,\varepsilon}^{(0)}\|_{\cB_1(L^2((0,\infty);dx))}  
\no \\
& \qquad +|\eta(\alpha)|\Big\|\big(v \ol{e^{iz^{1/2}\cdot}},\cdot\big)_{L^2((0,\infty);dx)} 
ue^{iz^{1/2}\cdot}     \no \\
& \hspace*{2.2cm} 
- (v_{(\varepsilon)}\ol{e^{iz^{1/2}\cdot}},\cdot)_{L^2((0,\infty);dx)} 
u_{(\varepsilon)}e^{iz^{1/2}\cdot} \Big\|_{\cB_1(L^2((0,\infty);dx))}  \no \\
& \quad =\Big\|\ol{u \big(H_{(0,\infty),0}^{(0)}-z I_{(0,\infty)}\big)^{-1}v}
- \ol{u_{(\varepsilon)} \big(H_{(0,\infty),0}^{(0)}-z I_{(0,\infty)}\big)^{-1}v}  
\no \\
&\qquad \quad +\ol{u_{(\varepsilon)} 
\big(H_{(0,\infty),0}^{(0)}-z I_{(0,\infty)}\big)^{-1}v}   \no \\
&\qquad \quad - \ol{u_{(\varepsilon)} \big(H_{(0,\infty),0}^{(0)}
-zI_{(0,\infty)}\big)^{-1}v_{(\varepsilon)}}
\Big\|_{\cB_1(L^2((0,\infty);dx))}  \no \\
& \qquad + |\eta(\alpha)|\Big\|\big(v \ol{e^{iz^{1/2}\cdot}},\cdot\big)_{L^2((0,\infty);dx)} 
ue^{iz^{1/2}\cdot}     \no \\
& \hspace*{2.2cm}
- \big(v_{(\varepsilon)}\ol{e^{iz^{1/2}\cdot}},\cdot\big)_{L^2((0,\infty);dx)} 
u_{(\varepsilon)}e^{iz^{1/2}\cdot} 
\Big\|_{\cB_1(L^2((0,\infty);dx))}  \no \\
& \quad \leq C(z,\alpha)\big[\|u-u_{(\varepsilon)}\|_{L^2((0,\infty);dx)}
+ \|v-v_{(\varepsilon)}\|_{L^2((0,\infty);dx)} \big]  \no \\
&\quad =2C(z,\alpha)\|u-u_{(\varepsilon)}\|_{L^2((0,\infty);dx)}\lb{127},
\end{align}
where $C(z,\alpha)$ is an appropriate constant, and we abbreviated 
\begin{equation}
\eta(\alpha)=\frac{\sin(\alpha)}{\cos(\alpha)+iz^{1/2}\sin(\alpha)}.
\end{equation}
Since \eqref{127} converges to zero as $\varepsilon \downarrow 0$, one infers
\begin{equation}\lb{128}
\lim_{\varepsilon\downarrow 0}{\det}_{L^2((0,\infty);dx)}
\big(I_{(0,\infty)} + K_{\alpha,(\varepsilon)}^{(0)}(z)\big) 
= {\det}_{L^2((0,\infty);dx)} \big(I_{(0,\infty)} + K_{\alpha}^{(0)}(z)\big).
\end{equation}
Since $V_{(\varepsilon)}$ has compact support, one has 
\begin{equation}\lb{129}
{\det}_{L^2((0,\infty);dx)}
\big(I_{(0,\infty)} + K_{\alpha,(\varepsilon)}^{(0)}(z)\big)
= \frac{\cos(\alpha)\psi_{+,(\varepsilon)}(z,0)
+ \sin(\alpha)\psi_{+,(\varepsilon)}^{\p}(z,0)}{\cos(\alpha)+i z^{1/2} \sin(\alpha)},
\end{equation}
where $\psi_{+,(\varepsilon)}$ denotes the Jost solution corresponding to the 
potential $V_{(\varepsilon)}$.  Therefore,
\begin{equation}\lb{130}
{\det}_{L^2((0,\infty);dx)}\big(I_{(0,\infty)} + K_{\alpha}^{(0)}(z)\big) = 
\lim_{\varepsilon \downarrow 0}\frac{\cos(\alpha)
\psi_{+,(\varepsilon)}(z,0)+\sin(\alpha)
\psi_{+,(\varepsilon)}^{\p}(z,0)}{\cos(\alpha)+i z^{1/2} \sin(\alpha)}. 
\end{equation}
Iterating the Volterra integral equations \eqref{3.21} and \eqref{3.22}, with $V$ replaced 
by $V_{(\varepsilon)}$, and $\psi_+$ replaced by $\psi_{+,(\varepsilon)}$, using the estimates,
\begin{align}
& |\psi_+(z,x)| + |\psi_{+,(\varepsilon)}(z,x)| \leq Ce^{-\Im(z^{1/2})x}, 
\quad \Im(z^{1/2})\geq0, \; x\geq 0, \\
& |V_{(\varepsilon)}(x)| \leq |V(x)| \, \text{ for a.e.\ $x\geq 0$,} 
\end{align}
together with $V\in L^1((0,\infty);dx)$ and Lebesgue's dominated convergence theorem 
readily yield 
\begin{equation}\lb{131}
\lim_{\varepsilon\downarrow 0}\psi_{+,(\varepsilon)}(z,x)
=\psi_{+}(z,x), \quad \lim_{\varepsilon\downarrow0} 
\psi_{+,(\varepsilon)}^{\p}(z,x) = \psi_+^{\p}(z,x),  \quad x\geq 0.
\end{equation}
Thus, taking the limit $\varepsilon \downarrow 0$ on the right-hand side of \eqref{130} 
proves \eqref{3.47}.
\end{proof}

In the special Dirichlet case $\alpha=0$, \eqref{3.47} is due to Jost and Pais \cite{JP51} 
(see also \cite{BF60}).

\begin{remark} \lb{r3.7} 
Due to our hypotheses \eqref{3.1} and \eqref{3.8} on $V$ one can view 
$H_{(0,R),\alpha,\beta}$ as a quadratic form sum of 
$H_{(0,R),\alpha,\beta}^{(0)}$ and $V$ in $L^2((0,R);dx)$, 
\begin{equation}
H_{(0,R),\alpha,\beta} = H_{(0,R),\alpha,\beta}^{(0)} +_q V,
\end{equation}
and similarly, one can view $H_{(0,\infty),\alpha}$ as a quadratic form sum of 
$H_{(0,\infty),\alpha}^{(0)}$ and $V$ in $L^2((0,\infty);dx)$, 
\begin{equation}
H_{(0,\infty),\alpha} = H_{(0,\infty),\alpha}^{(0)} +_q V.
\end{equation}
This represents a special case of the perturbation theory approach used also in 
Appendix \ref{sA} (cf.\ \eqref{2.6}) and will play a role in our principal Section \ref{s5}. 
More precisely, the corresponding closed, symmetric, and densely defined 
sequilinear form, denoted by $Q_{H_{(0,R),\alpha,\beta}}$, associated with $H_{(0,R),\alpha,\beta}$ (cf.\ \cite[p.\ 312, 321, 327--328]{Ka80}), reads as follows:
\begin{align}
& Q_{H_{(0,R),\alpha,\beta}}(f,g) = \int_0^R dx \big[\ol{f'(x)} g'(x) + V(x) \ol{f(x)} g(x)\big]   \no \\
& \hspace*{2.65cm} - \cot(\alpha) \ol{f(0)} g(0) -  \cot(\beta) \ol{f(R)} g(R), \lb{2.2f} \\
& f, g \in \dom(Q_{H_{(0,R),\alpha,\beta}}) = H^1((0,R))  \no \\
& \quad = \big\{h\in L^2((0,R); dx) \,|\,
h \in AC ([0,R]); \, h' \in L^2((0,R); dx)\big\}, \quad \alpha, \beta \in (0,\pi),   \no \\
& Q_{H_{(0,R),0,\beta}}(f,g) = \int_0^R dx \big[\ol{f'(x)} g'(x) + V(x) \ol{f(x)} g(x)\big]
-  \cot(\beta) \ol{f(R)} g(R),  \lb{2.2g} \\
&  f, g \in \dom(Q_{H_{(0,R),0,\beta}})    \no \\
& \quad = \big\{h\in L^2((0,R); dx) \,|\,
h \in AC ([0,R]); \, h(0) =0; \, h' \in L^2((0,R); dx)\big\},  \no \\
& \hspace*{9.5cm}  \beta \in (0,\pi),   \no \\
& Q_{H_{(0,R),\alpha,0}}(f,g) = \int_0^R dx \big[\ol{f'(x)} g'(x) + V(x) \ol{f(x)} g(x)\big]
-  \cot(\alpha) \ol{f(0)} g(0),  \lb{2.2h} \\
&  f, g \in \dom(Q_{H_{(0,R),\alpha,0}})    \no \\
& \quad = \big\{h\in L^2((0,R); dx) \,|\,
h \in AC ([0,R]); \, h(R) =0; \, h' \in L^2((0,R); dx)\big\},   \no \\
& \hspace*{9.6cm}  \alpha \in (0,\pi),   \no \\
& Q_{H_{(0,R),0,0}}(f,g) = \int_0^R dx \big[\ol{f'(x)} g'(x) + V(x) \ol{f(x)} g(x)\big],  
\lb{2.2i} \\
& f, g \in \dom(Q_{H_{(0,R),0,0}}) = \dom\big(|H_{0,0}|^{1/2}\big) = H^1_0((0,R))  \no \\
& \quad = \big\{h\in L^2((0,R); dx) \,|\,
h \in AC([0,R]); \, h(0)=0, \, h(R) =0;    \no \\
& \hspace*{6.6cm}   h' \in L^2((0,R); dx)\big\}.   \no
\end{align}

Equations \eqref{2.2f}--\eqref{2.2i} follow from the fact that for any $\varepsilon>0$,
there exists $\eta(\varepsilon)>0$ such that for all $h \in H^1((0,R))$,
\begin{align}
& |h(x_0)| \leq \varepsilon \|h' \|_{L^2((0,R); dx)} +
\eta(\varepsilon) \|h \|_{L^2((0,R); dx)}, \quad x_0 \in [0,R],    \lb{2.123} \\
& \||V|^{1/2} h\|_{L^2((0,R); dx)} \leq \varepsilon \|h' \|_{L^2((0,R); dx)} +
\eta(\varepsilon) \|h \|_{L^2((0,R); dx)}    \lb{2.124}
\end{align}
(cf.\ \cite[p.\ 193, 345--346]{Ka80}). In particular, closely examining the inequalities 
in \cite[p.\ 193, 345--346]{Ka80} shows that $\eta(\varepsilon)$ can be chosen to be $2/\varepsilon$ and hence independently of $R>0$. Thus, 
for fixed $\alpha, \beta \in [0,\pi)$, $H_{(0,R),\alpha,\beta}$ are bounded from below uniformly with respect to 
$R>0$,
\begin{equation}
H_{(0,R),\alpha,\beta} \geq c_{\alpha,\beta} I_{(0,R)}    \lb{3.108}
\end{equation}
for some $c_{\alpha,\beta}\in\bbR$ independent of $R>0$, a fact to be re-examined 
in Lemma \ref{l4.6a} and used in Section \ref{s5}. 
\end{remark}

\begin{remark} \lb{r3.8}
We note in passing that in the special case of Dirichlet boundary conditions 
$\alpha = 0$ at $x=0$ in \eqref{3.3} and \eqref{3.5}, it is well-known (and discussed in great detail, e.g., in \cite[Sects.\ I.1--I.4]{CS89}) that one can actually accommodate fairly strongly singular potentials at $x=0$. More precisely, one can permit a.e.\ real-valued (singular)  potentials of the type 
\begin{equation}
V \in L^1((0,R); x dx) \, \text{ respectively, } \, 
V \in L^1((0,\infty); [x/(1+x)] dx),     \lb{3.100}
\end{equation}   
and all results in this paper extend to potentials satisfying \eqref{3.100} as long as the Dirichlet boundary condition $\alpha = 0$ is chosen at the left endpoint $x=0$. 
\end{remark}

\section{Some Basic Convergence Results}  \lb{s4}

In this section we prove some basic convergence results involving the 
Schr\"odinger operators $H_{(0,\infty),0,0}$ and $H_{(0,\infty),0}$ with Dirichlet boundary conditions. The case of general separated boundary conditions will 
subsequently be discussed with the help of Lemma \ref{l3.1a} and \ref{l3.2a}.

We start by decomposing $L^2((0,\infty); dx)$ into
\begin{equation}
L^2((0,\infty); dx) = L^2((0,R); dx) \oplus L^2((R,\infty); dx),   \lb{4.1} 
\end{equation}
and all direct operator sums $A = B \oplus C$ from now on are viewed with 
respect to the decomposition \eqref{4.1}.

\begin{lemma}\lb{l4.1} Let $z\in \C\backslash  [0,\infty)$. Then 
\begin{equation}
\slim_{R\rightarrow \infty} \big(\big[H_{(0,R),0,0}^{(0)}\oplus 0\big]-z I_{(0,\infty)}\big)^{-1} 
= \big(H_{(0,\infty),0}^{(0)}-z I_{(0,\infty)}\big)^{-1}.    \lb{4.2}
\end{equation}
and 
\begin{align}
\begin{split}
\lim_{R\rightarrow \infty} \Big[u_R\big(H_{(0,R),0,0}^{(0)}-zI_{(0,R)}\big)^{-1}\oplus 0\Big]=u \big(H_{(0,\infty),0}^{(0)}-z I_{(0,\infty)}\big)^{-1}&   \\  
\text{ in $\cB_2\big(L^2((0,\infty);dx)\big)$.}&    \lb{4.3}
\end{split}
\end{align}
as well as 
\begin{align}
\begin{split}
\lim_{R\rightarrow \infty} \Big[\ol{\big(H_{(0,R),0,0}^{(0)}-zI_{(0,R)}\big)^{-1}v_R} 
\oplus 0\Big] = \ol{\big(H_{(0,\infty),0}^{(0)}-z I_{(0,\infty)}\big)^{-1}v}&   \\  
\text{ in $\cB_2\big(L^2((0,\infty);dx)\big)$.}&    \lb{4.3a}
\end{split}
\end{align}
\end{lemma}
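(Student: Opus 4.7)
I will prove (i) first, then use it as a stepping stone to (ii), and finally deduce (iii) from (ii). Throughout, the idea is to reduce everything to a negative real spectral parameter $z = -E$, $E > 0$, where the explicit Dirichlet kernels \eqref{3.23a}--\eqref{3.23b} are real, nonnegative, and related by domain monotonicity \eqref{3.33a}.

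For (i), I exploit the explicit kernels directly. For $z = -E < 0$, inspection of \eqref{3.23a} and \eqref{3.23b} gives pointwise convergence $G_{(0,R),0,0}^{(0)}(-E,x,x') \to G_{(0,\infty),0}^{(0)}(-E,x,x')$ for every fixed $x, x' \in (0,\infty)$ as $R \to \infty$ (after extending the former by zero outside $(0,R)^2$), together with the domination
\[
0 \leq G_{(0,R),0,0}^{(0)}(-E,x,x')\chi_{(0,R)}(x)\chi_{(0,R)}(x') \leq G_{(0,\infty),0}^{(0)}(-E,x,x')
\]
from Remark \ref{r3.2}. For $f \in L^2((0,\infty);dx)$, the norm $\|\big[(H_{(0,R),0,0}^{(0)} + E)^{-1} \oplus E^{-1}\big]f - (H_{(0,\infty),0}^{(0)} + E)^{-1}f\|_{L^2}^2$ then tends to zero by dominated convergence. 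General $z \in \bbC\setminus [0,\infty)$ follows from the standard principle that strong resolvent convergence at one point propagates to the whole joint resolvent set (via the first resolvent identity and the uniform bound $\|(H_{(0,R),0,0}^{(0)} - z)^{-1}\| \leq \dist(z,[0,\infty))^{-1}$).

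For (ii), I first treat $z = -E$. The squared Hilbert--Schmidt norm of the difference equals
\[
\iint_{(0,\infty)^2} \big|u(x)\chi_{(0,R)}(x)G_{(0,R),0,0}^{(0)}(-E,x,x')\chi_{(0,R)}(x') - u(x)G_{(0,\infty),0}^{(0)}(-E,x,x')\big|^2\,dx\,dx'.
\]
The integrand converges to zero pointwise and is dominated by $4 |u(x)|^2 |G_{(0,\infty),0}^{(0)}(-E,x,x')|^2$, which lies in $L^1(dx\,dx')$: indeed, using \eqref{3.23b} one verifies $\sup_{x \geq 0} \int_0^\infty |G_{(0,\infty),0}^{(0)}(-E,x,x')|^2\,dx' \leq C(E) < \infty$, and $u \in L^2((0,\infty);dx)$ since $V \in L^1((0,\infty);dx)$. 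Dominated convergence finishes this case. For general $z \in \bbC \setminus [0,\infty)$, I use the first resolvent identity to write
\begin{align*}
& u_R\big(H_{(0,R),0,0}^{(0)} - zI_{(0,R)}\big)^{-1} \oplus 0 \\
& \quad = \big[u_R\big(H_{(0,R),0,0}^{(0)} + EI_{(0,R)}\big)^{-1} \oplus 0\big] \cdot \big[I_{(0,\infty)} + (z+E)\big((H_{(0,R),0,0}^{(0)} - zI_{(0,R)})^{-1} \oplus 0\big)\big].
\end{align*}
The first factor converges in $\cB_2(L^2((0,\infty);dx))$ by the $z = -E$ case; the second converges strongly by (i) and is uniformly norm-bounded. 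The upgrade of $\cB_2$-times-strong to $\cB_2$-convergence is the main technical point: I write $A_R B_R - AB = (A_R - A)B_R + A(B_R - B)$, control the first term by $\|A_R - A\|_{\cB_2}\|B_R\|$, and approximate $A$ by a finite-rank operator $A_K$ with $\|A - A_K\|_{\cB_2} < \varepsilon$; then $\|A_K(B_R - B)\|_{\cB_2} \to 0$ follows because strong convergence $B_R^* \to B^*$ (obtained by applying (i) at $\bar z$ and taking adjoints) implies convergence on the finitely many vectors defining $A_K^*$.

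Finally, (iii) is immediate from (ii) by taking adjoints and noting that $u$ and $v$ differ only by the bounded factor $\sgn(V(\cdot))$, so $\cB_2$-convergence of $u_R(H_{(0,R),0,0}^{(0)} - \bar z)^{-1} \oplus 0 \to u(H_{(0,\infty),0}^{(0)} - \bar z)^{-1}$ from (ii) yields the desired statement after multiplying by $\sgn(V)$ on the appropriate side. The most delicate step I anticipate is justifying the $\cB_2$-convergence of the operator product in the reduction from general $z$ to $z = -E$; everything else is a direct application of pointwise kernel convergence and dominated convergence.
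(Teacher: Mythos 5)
Your proposal is correct, and its core — pointwise convergence of the explicit Dirichlet kernels \eqref{3.23a}, \eqref{3.23b}, the domain-monotonicity majorant $0\leq G_{(0,R),0,0}^{(0)}(-E,x,x')\leq G_{(0,\infty),0}^{(0)}(-E,x,x')$, and dominated convergence — is exactly the engine of the paper's proof. The two routes diverge in packaging. For \eqref{4.2} the paper does not prove strong convergence directly: it invokes the equivalence of weak and strong resolvent convergence for self-adjoint operators and verifies only weak convergence tested against $f,g$ in the dense set $L^1\cap L^2$, where the uniform kernel bound $C(z)$ supplies the integrable majorant $C(z)|f(x)g(x')|$; your direct argument works too, but the one-line ``tends to zero by dominated convergence'' conceals the fact that the outer ($x$-)integral needs the majorant $\big(\big(H_{(0,\infty),0}^{(0)}+E\big)^{-1}|f|\big)(x)^2$ (obtained from $|K_R-K|\leq K$ and positivity of the kernel), not a square-integrable kernel bound — the free half-line resolvent is not Hilbert--Schmidt, so a crude $L^2(dx\,dx')$ domination of the kernel difference is unavailable at this step. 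For \eqref{4.3} your argument coincides with the paper's (your explicit verification that $\sup_x\int_0^\infty|G_{(0,\infty),0}^{(0)}(-E,x,x')|^2dx'<\infty$ together with $u\in L^2$ replaces the paper's appeal to \eqref{4.14}), and \eqref{4.3a} is obtained by adjoints in both treatments. Where your write-up genuinely adds value is the reduction from general $z\in\bbC\setminus[0,\infty)$ to $z=-E$: the paper disposes of this with ``without loss of generality,'' even though the positivity and monotonicity of the kernels are only available on the negative half-axis; your factorization via the first resolvent identity, combined with the standard upgrade of ($\cB_2$-convergent)\,$\times$\,(uniformly bounded, strongly convergent) to $\cB_2$-convergence of the product (finite-rank approximation plus strong convergence of adjoints at $\bar z$), is precisely the missing detail and is correctly executed.
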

\begin{proof}
Without loss of generality, it suffices to prove \eqref{4.2} and \eqref{4.3} for $z<0$. 

Since weak resolvent convergence in the context of self-adjoint operators is equivalent to strong resolvent convergence, in order to prove \eqref{4.2} it suffices to prove that 
\begin{align}\lb{39}
& \lim_{R\rightarrow \infty}\big(f,\big(\big[H_{(0,R),0,0}^{(0)}\oplus 0\big]
- z I_{(0,\infty)}\big)^{-1}g\big)_{L^2((0,\infty);dx)}   \no \\
& \quad 
= \big(f, \big(H_{(0,\infty),0}^{(0)}-z I_{(0,\infty)}\big)^{-1}g\big)_{L^2((0,\infty);dx)},    \quad  f,g\in L^2((0,\infty);dx)). 
\end{align}
In addition, it suffices to verify \eqref{39}) for all $f$ and $g$ belonging to a dense subset (see \cite[Theorem\ 4.26(b)]{We80}), which we choose to be 
\begin{equation}\lb{40}
L^1((0,\infty);dx)\cap L^2((0,\infty);dx).
\end{equation}
From now on let $f$ and $g$ belong to the set in \eqref{40}. Then 
\begin{align}
&\Big|\big( f,\big[\big(H_{(0,R),0,0}^{(0)}-z I_{(0,R)}\big)^{-1}\oplus 
(-z)^{-1}I_{(R,\infty)}\big]g
\big)_{L^2((0,\infty);dx)}   \no \\
& \; -\big(f, \big(H_{(0,\infty),0}^{(0)}-z I_{(0,\infty)}\big)^{-1}g 
\big)_{L^2((0,\infty);dx)}\Big|    \no \\
& \quad \leq  
\int_0^{\infty}\int_0^{\infty} dx^{\prime}dx \, 
\Big|\ol{f(x)}\big[\chi_{(0,R)}(x) \chi_{(0,R)}(x^{\prime}) 
G_{(0,R),0,0}^{(0)}(z,x,x^{\prime})     \no \\
& \hspace*{3.2cm} -G_{(0,\infty),0}^{(0)}(z,x,x^{\prime})\big]
g(x^{\prime})\Big|  
 +\bigg|\frac{1}{z}\int_R^{\infty} dx \, \ol{f(x)}g(x)\bigg|,   \lb{41}
\end{align}
where $G_{(0,R),0,0}^{(0)}(z,\cdot,\cdot)$ and 
$G_{(0,\infty),0}^{(0)}(z,\cdot,\cdot)$ are the integral kernels defined in \eqref{3.6} and \eqref{3.13}, respectively.

It is clear that the second term on the right-hand side  of \eqref{41} converges to zero as $R\rightarrow \infty$.  

In view of the identity
 \begin{equation}\lb{45}
\psi_{R,0}^{(0)}(z,x) = \frac{\sin(z^{1/2}(R-x))}{\sin(z^{1/2}R)}=e^{iz^{1/2}x}
 +2i\frac{e^{iz^{1/2}2R}}{e^{iz^{1/2}2R}-1}\sin(z^{1/2}x), 
 \quad \Im(z^{1/2})\geq0,
 \end{equation}
and noting that the second term on the right-hand side of \eqref{45} converges to zero for all $x>0$ since by hypothesis $\Im(z^{1/2})>0$,  it is clear from 
\eqref{3.6} that the integrand in 
\begin{equation}\lb{42}
 \int_0^{\infty}\int_0^{\infty} dx^{\prime}dx \, \Big|\ol{f(x)}\big[\chi_{(0,R)}(x)
 \chi_{(0,R)}(x^{\prime})G_{(0,\infty),0,0}^{(0)}(z,x,x^{\prime})
 - G_{(0,\infty),0}^{(0)}(z,x,x^{\prime}) \big]g(x^{\prime})\Big|
 \end{equation}
converges pointwise to zero as $R\rightarrow \infty$. 

Due to the explicit structure of 
$G_{(0,R),0,0}^{(0)}(z,\cdot,\cdot)$ and $G_{(0,\infty),0}^{(0)}(z,\cdot,\cdot)$
(cf.\ \eqref{3.6}, \eqref{3.13}, \eqref{3.23a}, and \eqref{3.23b}), there is a constant $C(z)>0$, depending only on $z$, such that 
 \begin{equation}\lb{43}
 \big|\chi_{(0,R)}(x)\chi_{(0,R)}(x^{\prime})G_{(0,\infty),0,0}^{(0)}(z,x,x^{\prime})
 - G_{(0,\infty),0}^{(0)}(z,x,x^{\prime}) \big|\leq C(z)
 \end{equation}
 uniformly for a.e.\ $x,x^{\prime}\in (0,\infty)$ and all $R>0$.  
 
 Thus, the integrand in \eqref{42} is majorized by the integrable function
 \begin{equation}\lb{44}
C(z)\big| f(x)g(x^{\prime})\big|.
 \end{equation}
for $R$ sufficiently large. (One recalls we asssumed $f,g\in L^1((0,\infty);dx)$). Consequently, convergence of \eqref{42} to zero as $R\rightarrow \infty$ follows from an application of Lebesgue's dominated convergence theorem, implying 
\eqref{4.2}.

To prove \eqref{4.3}, we first note that by \eqref{3.36} and \eqref{3.46}, 
\begin{align}
u_R\big(H_{(0,R),\alpha,\beta, }^{(0)}-z I_{(0,R)}\big)^{-1/2} &\in 
\cB_2\big(L^2((0,R); dx)\big),    \\
u \big(H_{(0,\infty),\alpha}^{(0)} - z I_{(0,\infty)}\big)^{-1/2} 
&\in \cB_2\big(L^2((0,\infty); dx)\big),
\end{align} 
in particular,
\begin{align}
u_R\big(H_{(0,R),\alpha,\beta, }^{(0)}-z I_{(0,R)}\big)^{-1} &\in 
\cB_2\big(L^2((0,R); dx)\big),    \lb{4.13} \\
u \big(H_{(0,\infty),\alpha}^{(0)} - z I_{(0,\infty)}\big)^{-1} 
&\in \cB_2\big(L^2((0,\infty); dx)\big).   \lb{4.14}
\end{align} 
Thus, it suffices to verify that 
\begin{align}\lb{46}
\begin{split}
& \lim_{R\rightarrow \infty} \int_{(0,\infty)} \int_{(0,\infty)} dx dx' \, 
\big| u(x)\chi_{(0,R)}(x)\chi_{(0,R)}(x^{\prime})G_{(0,R),0,0}^{(0)}(z,x,x^{\prime})
\\
& \hspace*{3.9cm}
-u(x)G_{(0,\infty),0}^{(0)}(z,x,x^{\prime})\big|^2 = 0.
\end{split}
\end{align}
As in the case of \eqref{4.2}, the integrand in \eqref{46} converges pointwise to 
zero as $R\rightarrow \infty$.

Employing the domain monotonicity \eqref{3.33a} for Dirichlet Green's functions one obtains the desired $R$-independent integrable majorant (cf.\ \eqref{4.14})
\begin{align}
\begin{split} 
& \big| u(x)\chi_{(0,R)}(x)\chi_{(0,R)}(x^{\prime})G_{(0,R),0,0}^{(0)}(z,x,x^{\prime})
- u(x)G_{(0,\infty),0}^{(0)}(z,x,x^{\prime})\big|^2    \\
& \quad \leq 2 |u(x)|^2 \big|G_{(0,\infty),0}^{(0)}(z,x,x^{\prime})\big|^2
\quad \text{for a.e.\ $x, x' \in (0,\infty)$.} 
\end{split} 
\end{align}
An application of Lebesgue's dominated convergence theorem then completes 
the proof of \eqref{4.3}. 

Relation \eqref{4.3a} follows after taking adjoints in \eqref{4.3} (replacing 
$u_R, u$ by $v_R, v$, resp.). 
\end{proof}

Next, recalling our notation $u_R, v_R, V_R$ in \eqref{3.35} and the trace class properties proved in \eqref{3.36} and \eqref{3.46}, we now state the following result:

\begin{lemma}\lb{l4.2}
Assume \eqref{3.8} and let $z\in \C\backslash  [0,\infty)$. Then  
\begin{align}\lb{11}
& \lim_{R\to \infty}\Big\|\Big[\ol{u_R \big(H_{(0,R),0,0}^{(0)} 
-z I_{(0,R)}\big)^{-1} v_R} \oplus 0\Big] 
\no \\
& \qquad \;\;\, -  
\ol{u \big(H_{(0,\infty),0}^{(0)}-z I_{(0,\infty)}\big)^{-1}v}\Big\|_{\cB_1(L^2((0,\infty);dx))} = 0
\end{align}
and 
\begin{align}
\begin{split} 
& \lim_{R\to\infty}{\det}_{L^2((0,R);dx)} \Big(I_{(0,R)} 
+ \ol{u_R \big(H_{(0,R),0,0} - z I_{(0,R)}\big)^{-1} v_R}\Big)     \\
& \quad = {\det}_{L^2((0,\infty);dx)} \Big(I_{(0,\infty)} 
+ \ol{u \big(H_{(0,\infty),0} - z I_{(0,\infty)}\big)^{-1} v}\Big).   \lb{12}
\end{split} 
\end{align}
\end{lemma}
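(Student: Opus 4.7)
The plan is to prove \eqref{11} by a Hilbert--Schmidt factorization of $u_R(H^{(0)}_{(0,R),0,0}-z)^{-1}v_R$ combined with Lebesgue dominated convergence, and then to derive \eqref{12} from \eqref{11} via a Birman--Schwinger algebraic identity together with continuity of the Fredholm determinant on $I+\cB_1$.

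For \eqref{11}, by analyticity in $z$ it suffices to work at $z=-E$ with $E>0$ sufficiently large. We factor
\[
u_R \big(H_{(0,R),0,0}^{(0)} + E\big)^{-1} v_R = A_R B_R,
\]
where $A_R = u_R\big(H_{(0,R),0,0}^{(0)}+E\big)^{-1/2}$ and $B_R=\big(H_{(0,R),0,0}^{(0)}+E\big)^{-1/2} v_R$, with an analogous factorization $u(H_{(0,\infty),0}^{(0)}+E)^{-1}v=AB$ on the half-line. By Lemma \ref{l3.1}, all four factors are Hilbert--Schmidt with explicit integral kernels, so that the standard inequality
\[
\|A_R B_R \oplus 0 - AB\|_{\cB_1} \leq \|A_R \oplus 0 - A\|_{\cB_2}\|B_R \oplus 0\|_{\cB_2} + \|A\|_{\cB_2}\|B_R \oplus 0 - B\|_{\cB_2}
\]
reduces the claim to HS convergence $A_R \oplus 0 \to A$, the statement $B_R \oplus 0 \to B$ following by passage to adjoints. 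The integral kernel of $A_R \oplus 0$ on $L^2((0,\infty);dx)$ is $\chi_{(0,R)}(x) u(x) R_{(0,R),0,0}^{(0),1/2}(-E,x,x')\chi_{(0,R)}(x')$; the explicit decomposition \eqref{3.31} yields pointwise a.e.\ convergence to the kernel of $A$, and the domain monotonicity bound \eqref{3.33} from Remark \ref{r3.2} supplies the $R$-independent majorant $|u(x)|R_{(0,\infty),0}^{(0),1/2}(-E,x,x')$, square-integrable on $(0,\infty)^2$ by \eqref{4.14}. Dominated convergence in $L^2((0,\infty)^2; dx\,dx')$ then yields \eqref{11}.

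For \eqref{12}, set $K_{0,R}=\ol{u_R(H_{(0,R),0,0}^{(0)}-z)^{-1}v_R}$, $K_R=\ol{u_R(H_{(0,R),0,0}-z)^{-1}v_R}$, and define $K_0,K$ analogously on the half-line. Since $V=uv=vu$ as multiplication operators, the second resolvent identity yields $K_R=(I_{(0,R)}+K_{0,R})^{-1}K_{0,R}$ and $K=(I_{(0,\infty)}+K_0)^{-1}K_0$, hence in block form
\[
K_R \oplus 0 = \big(I_{(0,\infty)}+K_{0,R}\oplus 0\big)^{-1} \big(K_{0,R}\oplus 0\big).
\]
By \eqref{11}, $K_{0,R}\oplus 0 \to K_0$ in $\cB_1$, and therefore in operator norm. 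Using the uniform lower bound of Remark \ref{r3.7} to extract a common resolvent set, and the Birman--Schwinger principle to identify invertibility of $I+K_0$ with $z\in\rho(H_{(0,\infty),0})$, a Neumann-series argument gives $(I_{(0,\infty)}+K_{0,R}\oplus 0)^{-1}\to(I_{(0,\infty)}+K_0)^{-1}$ in operator norm. Consequently $K_R \oplus 0 \to K$ in $\cB_1$, and the continuity of the Fredholm determinant on $I+\cB_1$ combined with the block-diagonal identity ${\det}_{L^2((0,\infty);dx)}(I_{(0,\infty)}+K_R\oplus 0) = {\det}_{L^2((0,R);dx)}(I_{(0,R)}+K_R)$ completes the proof of \eqref{12}.

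The main obstacle is the Hilbert--Schmidt convergence $A_R \oplus 0 \to A$: the explicit kernel formula of Lemma \ref{l3.1} is essential, and the domain monotonicity bound \eqref{3.33} is the critical input that produces an $R$-independent, $L^2(dx\,dx')$-integrable majorant. Without such a bound, pointwise convergence alone would not upgrade to HS convergence, and the entire passage to the perturbed statement \eqref{12} would collapse.
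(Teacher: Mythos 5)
Your argument for \eqref{11} is essentially the proof in the paper: both reduce the trace-norm claim to Hilbert--Schmidt convergence of $u_R\big(H^{(0)}_{(0,R),0,0}+E\big)^{-1/2}\oplus 0$ (the second factor being handled by the symmetric argument with $u$ replaced by $v$), and both obtain that from the explicit kernels of Lemma \ref{l3.1}, pointwise a.e.\ convergence via \eqref{3.31}--\eqref{3.32}, the $R$-independent majorant supplied by the domain monotonicity \eqref{3.33}, and dominated convergence. (The square-integrability of the majorant comes from the Bessel-function bound \eqref{3.25}, \eqref{14a}, not from \eqref{4.14}, which concerns the full resolvent rather than its square root; this is a citation slip, not a gap.) Where you genuinely diverge is \eqref{12}. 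The paper obtains \eqref{12} in one step from \eqref{11}, using the block-sum determinant identity together with trace-norm continuity of $A\mapsto{\det}(I+A)$; note, however, that the identities displayed in the paper's proof carry the superscript $(0)$, so the paper is in effect reading \eqref{12} with the \emph{free} resolvents $\big(H^{(0)}_{(0,R),0,0}-z\big)^{-1}$, $\big(H^{(0)}_{(0,\infty),0}-z\big)^{-1}$ inside the determinants. You instead prove \eqref{12} for the \emph{perturbed} resolvents as literally written, passing through the second resolvent identity (your $K=(I+K_0)^{-1}K_0$), norm convergence of the inverses --- which, as you correctly note, requires $z\in\rho(H_{(0,\infty),0})$ so that $I+K_0$ is boundedly invertible and hence $I+K_{0,R}\oplus 0$ is eventually invertible as well --- and only then determinant continuity. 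Your route costs an extra algebraic step but delivers the statement in the stronger, literal form, which is the form that is actually invoked later (cf.\ \eqref{4.41} and Lemma \ref{l5.1}); the paper's route is shorter but, as printed, only covers the free-resolvent version. Both arguments are correct.
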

\begin{proof}
It suffices to verify \eqref{11} for real energies $z<0$. Convergence for 
$z\in \C\backslash \R$ then follows from convergence for real energies via 
the first resolvent identity.  Therefore, let $z = - E <0$ be fixed for the remainder 
of this proof.  In order to establish \eqref{11} it will be sufficient to show that 
\begin{align}
& \lim_{R\to\infty} \Big\|\Big[u_R  
\big(H_{(0,R),0,0}^{(0)} + E I_{(0,R)}\big)^{-1/2}
\oplus 0\Big]   \no \\
& \qquad \quad - u\big(H_{(0,\infty),0}^{(0)} 
+ E I_{(0,\infty)}\big)^{-1/2}\Big\|_{\cB_2(L^2((0,\infty); dx))}^2    
\no \\
& \quad = \lim_{R\to\infty} \int_0^\infty \int_0^\infty dx dx' \, 
\big|\cK_R(-E,x,x')-\cK(-E,x,x')\big|^2 = 0,     \lb{13}
\end{align}
where $\cK_R(-E,\cdot,\cdot)$ and $\cK(-E,\cdot,\cdot)$ denote the integral 
kernels for the Hilbert--Schmidt 
operators $u_R \big[\big(H_{(0,R),0,0}^{(0)} + E I_{(0,R)}\big)^{-1/2} 
\oplus 0\big]$ and 
$u \big(H_{(0,\infty),0}^{(0)} + E I_{(0,\infty)}\big)^{-1/2}$, respectively.  Specifically, 
\begin{align}
\cK_R(-E,x,x^{\p})&=\begin{cases}
u_R(x)R_{(0,R),0,0}^{(0), 1/2} (-E,x,x^{\p}) & \text{for a.e.\ $x,x^{\p}\in (0,R)$,} \\
0, & \text{for a.e.\ $x\geq R$ or $x'\geq R$}, \end{cases}     \\
\cK(-E,x,x^{\p})&=u(x) R_{(0,\infty),0}^{(0), 1/2}(-E,x,x^{\p}) \;\;  
\text{ for a.e.\ $x, x' \in (0,\infty)$.}   
\end{align}
An application of \eqref{3.25} and \eqref{3.33} then yields the $R$-independent majorant
\begin{align}
& |\cK_R(-E,x,x^{\p})-\cK(-E,x,x^{\p})|^2  \leq  4 |\cK(-E,x,x^{\p})|^2     \no\\
& \quad \leq 4|u(x)|^2 \big|R_{(0,R),0,0}^{(0), 1/2}(-E,x,x^{\p}))\big|^2\no\\
& \quad \leq 4 \pi^{-2} |u(x)|^2 \big[|K_0(E^{1/2}|x-x^{\p}|)|^2  
+ |K_0(E^{1/2}(x+x^{\p}))|^2\big]      \lb{14}
\end{align}
for a.e.\ $x,x' \in (0,\infty)$. 
The zeroth order irregular modified Bessel function $K_0(\cdot)$ satisfies the estimate 
\begin{equation}
0 \leq K_0(x)\leq C \, \ln\bigg(\frac{ex}{1+x}\bigg) \frac{e^{-x}}{2\pi x^{1/2}+1}, 
\quad  x>0,    \lb{14a}
\end{equation}
 for a suitable constant $C>0$ (cf.\ \cite[Sect.\ 9.6]{AS72} for the proper asymptotic 
 relations as $x\downarrow 0$ and $x \to\infty$, implying \eqref{14a}).
Thus, the right-hand side of \eqref{14} represents an $R$-independent integrable majorant of $|\cK_R(-E,\cdot,\cdot)-\cK(-E,\cdot,\cdot)|^2$ a.e.\ on $(0,\infty)^2$.

Applying Lebesgue's dominated convergence theorem to \eqref{3.28}, employing the integrable majorant \eqref{3.32} (equivalently, applying the domain monotonicity \eqref{3.33a}) one infers for a.e.\ $x,x^{\p} \in (0,\infty)$ that 
\begin{align}
\lim_{R\rightarrow \infty}\cK_R(z,x,x^{\p}) 
& = \lim_{\substack{R\rightarrow \infty \\ R\geq \max\{x,x^{\p}\}}}
u_R(x) R_{(0,R),0,0}^{(0),1/2}(z,x,x^{\p})\no\\
&=u(x)  R_{(0,\infty),0}^{(0),1/2}(z,x,x^{\p})   \no \\
&=\cK(z,x,x^{\p}),   \lb{17}
\end{align}
that is, $\cK_R(z,\cdot,\cdot)$ converges pointwise almost everywhere on 
$(0,\infty)^2$ to $\cK(z,\cdot,\cdot)$.

An application of Lebesgue's dominated convergence theorem to \eqref{13}, using the $R$-independent integrable majorant in \eqref{14} then completes the proof of \eqref{11}.

In view of the identities,
\begin{align}
&{\det}_{L^2((0,\infty);dx)}\Big(I_{(0,\infty)}
+ \ol{u \big[\big(H_{(0,R),0,0}^{(0)}-z I_{(0,R)}\big)^{-1}\oplus 0\big]v}\Big)     \no \\
&\quad={\det}_{L^2((0,\infty);dx)}\Big(I_{(0,R)}\oplus I_{(R,\infty)}
+ \ol{u_R\big(H_{(0,R),0,0}^{(0)} - z I_{(0,R)}\big)^{-1}v_R}\oplus 0\Big)    \no \\
&\quad = {\det}_{L^2((0,R);dx)}\Big(I_{(0,R)}
+ \ol{u_R\big(H_{(0,R),0,0}^{(0)} - z I_{(0,R)}\big)^{-1}v_R}\Big).
\end{align}
and the fact that the mapping 
\begin{equation}
\begin{cases} \cB_1(\cH) \to \bbC, \\
A \mapsto {\det}_{\cH} (I_{\cH} + A), 
\end{cases}
\end{equation}
is continuous in $A$ with respect to the trace norm on $\cB_1(\cH)$, \eqref{12} follows from \eqref{11}. 
\end{proof}

\begin{lemma}\lb{l4.3}
Assume \eqref{3.8} and let $z\in \C\backslash  [0,\infty)$. Then 
\begin{equation}
\slim_{R\rightarrow \infty} \big([H_{(0,R),0,0} \oplus 0]-z I_{(0,\infty)}\big)^{-1}
= (H_{(0,\infty),0}-z I_{(0,\infty)})^{-1}.     \lb{4.27}
\end{equation} 
\end{lemma}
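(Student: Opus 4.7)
The plan is to combine the Konno--Kuroda (symmetrized second) resolvent formula with the convergence results already established in Lemmas \ref{l4.1} and \ref{l4.2}. First, since weak and strong resolvent convergence coincide for self-adjoint operators, and strong convergence at a single point $z_0$ of the common resolvent set extends to every $z \in \bbC \backslash [0,\infty)$ via the first resolvent identity, it suffices to work at one $z_0 = -E$ with $E>0$ chosen sufficiently large. The uniform lower bound \eqref{3.108} from Remark \ref{r3.7} guarantees the existence of such $E$ belonging to $\rho(H_{(0,R),0,0}) \cap \rho(H_{(0,\infty),0})$ for every $R>0$.

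Next, I would apply the Konno--Kuroda formula on $L^2((0,R); dx)$ and then direct-sum with $(-E)^{-1} I_{(R,\infty)}$ to obtain
\begin{align*}
& \bigl([H_{(0,R),0,0} \oplus 0] + E I_{(0,\infty)}\bigr)^{-1} - \bigl([H_{(0,R),0,0}^{(0)} \oplus 0] + E I_{(0,\infty)}\bigr)^{-1} \\
& \quad = - \widetilde B_R \bigl[I_{(0,\infty)} + \widetilde A_R \bigr]^{-1} \widetilde C_R,
\end{align*}
where
\begin{align*}
\widetilde A_R &= \ol{u_R \bigl(H_{(0,R),0,0}^{(0)} + E I_{(0,R)}\bigr)^{-1} v_R} \oplus 0, \\
\widetilde B_R &= \ol{\bigl(H_{(0,R),0,0}^{(0)} + E I_{(0,R)}\bigr)^{-1} v_R} \oplus 0, \\
\widetilde C_R &= \ol{u_R \bigl(H_{(0,R),0,0}^{(0)} + E I_{(0,R)}\bigr)^{-1}} \oplus 0,
\end{align*}
with the analogous identity for $H_{(0,\infty),0}$. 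This identity applies in the present generality because $V$ is form-bounded relative to the unperturbed operator with relative form bound zero (Remark \ref{r3.7}) and the relevant Birman--Schwinger kernel is trace class (Theorem \ref{t3.3}).

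The remaining steps assemble the known convergences. Lemma \ref{l4.2} gives $\cB_1$-norm convergence $\widetilde A_R \to A$, with $A$ the corresponding half-line object; combined with invertibility of $I_{(0,\infty)} + A$, this produces operator-norm convergence $[I_{(0,\infty)} + \widetilde A_R]^{-1} \to [I_{(0,\infty)} + A]^{-1}$ for all $R$ sufficiently large. Relation \eqref{4.3} of Lemma \ref{l4.1} together with \eqref{4.3a} yields $\cB_2$-norm convergence $\widetilde C_R \to C$ and $\widetilde B_R \to B$. Since a product of factors converging in $\cB_2$, operator norm, and $\cB_2$, respectively, converges in $\cB_1$-norm, the full perturbation term converges in $\cB_1$-norm; combined with the strong convergence of the unperturbed resolvents from \eqref{4.2}, this yields strong convergence of $\bigl([H_{(0,R),0,0} \oplus 0] - z I_{(0,\infty)}\bigr)^{-1}$ to $(H_{(0,\infty),0} - z I_{(0,\infty)})^{-1}$.

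The main obstacle is the justification of the Konno--Kuroda identity in the form-bounded setting together with uniform invertibility of $I_{(0,\infty)} + \widetilde A_R$ for all sufficiently large $R$. The first issue is handled by the form-sum interpretation recorded in Remark \ref{r3.7}; the second reduces, via the $\cB_1$-norm convergence in Lemma \ref{l4.2} and norm continuity of inversion near $I_{(0,\infty)} + A$, to choosing $E$ sufficiently large in light of \eqref{3.108}.
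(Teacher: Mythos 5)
Your proposal is correct and follows essentially the same route as the paper: the paper's proof of Lemma \ref{l4.3} likewise reduces matters (via the direct-sum decomposition) to the symmetrized second resolvent identity \eqref{63}--\eqref{60} and then combines the $\cB_1$-norm convergence of the Birman--Schwinger kernel from Lemma \ref{l4.2} (giving norm convergence of the inverses), the $\cB_2$-convergences \eqref{4.3}, \eqref{4.3a} from Lemma \ref{l4.1}, and the strong convergence \eqref{4.2} of the free resolvents. Your extra care in fixing a single sufficiently negative $z_0=-E$ via \eqref{3.108} and propagating by the first resolvent identity is a harmless (indeed slightly tidier) variant of the paper's treatment.
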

\begin{proof}
Since 
\begin{align}
& \begin{pmatrix} H_{(0,R),0,0} - z I_{(0,R)} & 0 \\ 0 & -z I_{(R,\infty)} 
\end{pmatrix}^{-1} 
= \begin{pmatrix} (H_{(0,R),0,0} - z I_{(0,R)})^{-1} & 0 \\ 0 & - z^{-1} I_{(R,\infty)} 
\end{pmatrix}   \no \\
& \quad = \big[(H_{(0,R),0,0} - z I_{(0,R)})^{-1} \oplus 0\big] 
- \big[0 \oplus z^{-1} I_{(R,\infty)}\big]
\end{align}
with respect to the decomposition \eqref{4.1}, and since 
$\slim_{R\to\infty} \big[0\oplus z^{-1}I_{(R,\infty)}\big] = 0$, it suffices to show that 
\begin{equation}\lb{62}
\slim_{R\to\infty}\big[(H_{(0,R),0,0}-z I_{(0,R)})^{-1}\oplus 0\big] 
= (H_{(0,\infty),0}-z I_{(0,\infty)})^{-1}. 
\end{equation}
  To verify \eqref{62}, we use Lemma \ref{l4.1}\,$(i), (ii)$ and \eqref{11} together with the resolvent identities
\begin{align}
& (H_{(0,R),0,0}-z I_{(0,R)})^{-1}
= \big(H_{(0,R),0,0}^{(0)}-z I_{(0,R)}\big)^{-1}    \no \\
& \quad - \ol{\big(H_{(0,R),0,0}^{(0)}-z I_{(0,R)}\big)^{-1}v_R}\Big[I_{(0,R)}
+\ol{u_R \big(H_{(0,R),0,0}^{(0)}-z I_{(0,R)}\big)^{-1}v_R} \Big]^{-1}   \no \\
& \qquad \times u_R \big(H_{(0,R),0,0}^{(0)}-z I_{(0,R)}\big)^{-1},    \lb{63} \\
& (H_{(0,\infty),0}-z I_{(0,\infty)})^{-1}
= \big(H_{(0,\infty),0}^{(0)}-z I_{(0,\infty)}\big)^{-1}    \no \\
& \quad -\ol{\big(H_{(0,\infty),0}^{(0)}-z I_{(0,\infty)}\big)^{-1}v}\bigg[I_{(0,\infty)}
+\ol{u \big(H_{(0,\infty),0}^{(0)}-z I_{(0,\infty)}\big)^{-1}v} \bigg]^{-1}   \no \\
& \qquad \times u \big(H_{(0,\infty),0}^{(0)}-z I_{(0,\infty)}\big)^{-1}.     \lb{60}
\end{align}
Using
\begin{align}
&\Big[I_{(0,R)}+\ol{u_R \big(H_{(0,R),0,0}^{(0)}-z I_{(0,R)}\big)^{-1}v_R}
\Big]^{-1}\oplus 0   \no\\
&\quad = \Big[I_{(0,\infty)}
+ \Big[\ol{u_R \big(H_{(0,R),0,0}^{(0)}-z I_{(0,R)}\big)^{-1}v_R}
\oplus 0\Big]\Big]^{-1}-\big(0\oplus I_{(R,\infty)}\big)\no
\end{align}
together with \eqref{63} yields
\begin{align}
&(H_{(0,R),0,0}-z I_{(0,R)})^{-1}\oplus 0 
= \big[\big(H_{(0,R),0,0}^{(0)}-z I_{(0,R)}\big)^{-1}\oplus 0\big]   \no \\
& \quad -\Big(\ol{(H_{(0,R),0,0}^{(0)}-z I_{(0,R)})^{-1}v_R}\oplus0\Big) \no\\
&\qquad \times \bigg(\Big[I_{(0,\infty)}
+\big(\ol{u_R(H_{(0,R),0,0}^{(0)}-z I_{(0,R)})^{-1}v_R} 
\oplus 0\big) \Big]^{-1}-\big(0\oplus I_{(R,\infty)} \big)\bigg)    \no\\
& \qquad \times \Big( u_R \big(H_{(0,R),0,0}^{(0)}-z I_{(0,R)}\big)^{-1}\oplus 0\Big). 
\lb{63b}
\end{align}
Based on the following convergence results,
\begin{align}
& \nlim_{R\rightarrow \infty}\Big[I_{(0,\infty)}  
+\ol{u_R(H_{(0,R),0,0}^{(0)}-z I_{(0,R)})^{-1}v_R}\oplus 0 \Big]^{-1}  \no \\
& \quad = \Big[I_{(0,\infty)}
+\ol{u \big(H_{(0,\infty),0}^{(0)} -z I_{(0,\infty)}\big)^{-1}v}\Big]^{-1},  \lb{64}\\
& \slim_{R\rightarrow \infty}\big(0\oplus I_{(R,\infty)}\big) = 0,   \lb{64d}\\
& \slim_{R\rightarrow \infty}
\Big(\big(H_{(0,R),0,0}^{(0)}-z I_{(0,R)}\big)^{-1}\oplus0\Big) 
= \big(H_{(0,\infty),0}^{(0)}-z I_{(0,\infty)}\big)^{-1},   \lb{64b}\\
& \lim_{R\rightarrow \infty}\Big\|\Big[u_R \big(H_{(0,R),0,0}^{(0)} 
- z I_{(0,R)}\big)^{-1}\oplus0\Big]    \no \\
& \qquad \quad - u \big(H_{(0,\infty),0}^{(0)}-z I_{(0,\infty)}\big)^{-1}
\Big\|_{\cB_2(L^2((0,\infty); dx))} = 0,   
\lb{64c}\\
& \lim_{R\rightarrow \infty} \Big\|
\Big[\ol{\big(H_{(0,R),0,0}^{(0)}-z I_{(0,R)}\big)^{-1}v_R}\oplus0\Big]    \no \\
& \qquad \quad - \ol{\big(H_{(0,\infty),0}^{(0)}-z I_{(0,\infty)}\big)^{-1}v}
\Big\|_{\cB_2(L^2((0,\infty); dx))} = 0,    \lb{65}
\end{align}
the right-hand side  of \eqref{63b} converges strongly to the right-hand side  of \eqref{60}.  We note that \eqref{64} relies on \eqref{11}, while \eqref{64b}, \eqref{64c}, and \eqref{65} follow from Lemma \ref{l4.1}. 
\end{proof}

Next we extend Lemmas \ref{l4.1}--\ref{l4.3} to the case of general separated boundary conditions applying Lemmas \ref{l3.1a} and \ref{l3.2a}.

\begin{lemma}\lb{l4.4} 
Assume \eqref{3.8} and let $z\in \C\backslash  [0,\infty)$ and 
$\alpha, \beta \in [0,\pi)$. Then 
\begin{equation}
\slim_{R\rightarrow \infty} \big(\big[H_{(0,R),\alpha,\beta}^{(0)}\oplus 0\big]-z I_{(0,\infty)}\big)^{-1} 
= \big(H_{(0,\infty),\alpha}^{(0)}-z I_{(0,\infty)}\big)^{-1}.    \lb{4.38}
\end{equation}
and 
\begin{align}
\begin{split}
\lim_{R\rightarrow \infty} \Big[u_R\big(H_{(0,R),\alpha,\beta}^{(0)}-zI_{(0,R)}\big)^{-1}\oplus 0\Big]=u \big(H_{(0,\infty),\alpha}^{(0)}-z I_{(0,\infty)}\big)^{-1}&   \\  
\text{ in $\cB_2\big(L^2((0,\infty);dx)\big)$.}&    \lb{4.39}
\end{split}
\end{align}
as well as
\begin{align}
\begin{split}
\lim_{R\rightarrow \infty} \Big[\ol{\big(H_{(0,R),\alpha,\beta}^{(0)}-zI_{(0,R)}\big)^{-1} v_R} \oplus 0\Big]= \ol{\big(H_{(0,\infty),\alpha}^{(0)}-z I_{(0,\infty)}\big)^{-1} v}&   \\  
\text{ in $\cB_2\big(L^2((0,\infty);dx)\big)$.}&    \lb{4.39a}
\end{split}
\end{align}

Moreover, 
\begin{align}\lb{4.40}
& \lim_{R\to \infty}\Big\|\Big[\ol{u_R \big(H_{(0,R),\alpha,\beta}^{(0)} 
-z I_{(0,R)}\big)^{-1} v_R} \oplus 0\Big] 
\no \\
& \qquad \;\;\, -  
\ol{u \big(H_{(0,\infty),\alpha}^{(0)}-z I_{(0,\infty)}\big)^{-1}v}
\Big\|_{\cB_1(L^2((0,\infty);dx))} = 0
\end{align}
and 
\begin{align}
\begin{split} 
& \lim_{R\to\infty}{\det}_{L^2((0,R);dx)} \Big(I_{(0,R)} 
+ \ol{u_R \big(H_{(0,R),\alpha,\beta} - z I_{(0,R)}\big)^{-1} v_R}\Big)     \\
& \quad = {\det}_{L^2((0,\infty);dx)} \Big(I_{(0,\infty)} 
+ \ol{u \big(H_{(0,\infty),\alpha} - z I_{(0,\infty)}\big)^{-1} v}\Big).   \lb{4.41}
\end{split} 
\end{align}
In addition, assume \eqref{3.8}. Then 
\begin{equation}
\slim_{R\rightarrow \infty} \big([H_{(0,R),\alpha,\beta} \oplus 0]-z I_{(0,\infty)}\big)^{-1}
= (H_{(0,\infty),\alpha}-z I_{(0,\infty)})^{-1}.    \lb{4.42}
\end{equation} 
\end{lemma}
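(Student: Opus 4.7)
The plan is to extend the Dirichlet results of Lemmas~\ref{l4.1}--\ref{l4.3} to arbitrary separated boundary conditions $\alpha,\beta\in[0,\pi)$ by applying the Krein-type comparison of Lemma~\ref{l3.1a} with $V\equiv 0$ to identify the difference of the free finite-interval and free half-line Green's functions as an explicit rank-one kernel, and then propagating the conclusions to the perturbed resolvents via the Birman--Schwinger-type resolvent identities used in the proof of Lemma~\ref{l4.3}. Specializing Lemma~\ref{l3.1a} to $V=0$ yields, for $0\le x,x'\le R$ and $z\in\bbC\setminus[0,\infty)$,
\[
G^{(0)}_{(0,R),\alpha,\beta}(z,x,x') = G^{(0)}_{(0,\infty),\alpha}(z,x,x') + \mathcal{C}_R(z)\,\psi^{(0)}_{0,\alpha}(z,x)\,\psi^{(0)}_{0,\alpha}(z,x'),
\]
with $\mathcal{C}_R(z)$ the scalar coefficient appearing in \eqref{BT} (with $\psi_+,\psi_{0,\alpha}$ replaced by their $V=0$ counterparts $\psi_+^{(0)},\psi_{0,\alpha}^{(0)}$). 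Setting $\kappa:=\Im(z^{1/2})>0$, the explicit formulas \eqref{3.14}, \eqref{3.19} yield $|\mathcal{C}_R(z)|=O(e^{-2\kappa R})$ and $\|\psi^{(0)}_{0,\alpha}(z,\cdot)\|^2_{L^2((0,R))}=O(e^{2\kappa R})$ as $R\to\infty$. The rank-one correction applied to any compactly supported $f\in L^2((0,\infty);dx)$ then has $L^2$-norm $O(e^{-\kappa R})\to 0$, so it vanishes on a dense subset of $L^2((0,\infty);dx)$; combined with the dominated-convergence argument used in Lemma~\ref{l4.1} (controlling the contribution of $G^{(0)}_{(0,\infty),\alpha}$ outside $(0,R)^2$ via the $R$-independent majorant \eqref{43}, \eqref{44}) this establishes \eqref{4.38}.

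For the Hilbert--Schmidt convergence \eqref{4.39}, I multiply the above identity by $u(x)$ and decompose the kernel difference between $[u_R(H^{(0)}_{(0,R),\alpha,\beta}-zI_{(0,R)})^{-1}\oplus 0]$ and $u(H^{(0)}_{(0,\infty),\alpha}-zI_{(0,\infty)})^{-1}$ into a rank-one piece supported on $(0,R)^2$ and a ``tail'' piece supported on $(0,\infty)^2\setminus(0,R)^2$. The tail piece vanishes in $\cB_2$ by dominated convergence, using $u(H^{(0)}_{(0,\infty),\alpha}-zI_{(0,\infty)})^{-1}\in\cB_2(L^2((0,\infty);dx))$ (cf.\ \eqref{4.14}) to supply an $R$-independent integrable majorant. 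The rank-one piece has squared $\cB_2$-norm $|\mathcal{C}_R(z)|^2\,\|u\,\psi^{(0)}_{0,\alpha}(z,\cdot)\|^2_{L^2((0,R))}\,\|\psi^{(0)}_{0,\alpha}(z,\cdot)\|^2_{L^2((0,R))}$, and this is where the principal obstacle arises: the crude pointwise bound $|\psi^{(0)}_{0,\alpha}(z,x)|^2\le C e^{2\kappa x}$ yields only $\|u\,\psi^{(0)}_{0,\alpha}\|^2_{L^2((0,R))}\le C\|V\|_{L^1((0,\infty))}e^{2\kappa R}$, which combines with the previous estimates to give merely an $O(1)$ bound rather than $o(1)$. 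The necessary upgrade follows from
\[
e^{-2\kappa R}\int_0^R|V(x)|\,e^{2\kappa x}\,dx = \int_0^R|V(x)|\,e^{-2\kappa(R-x)}\,dx\longrightarrow 0 \text{ as }R\to\infty,
\]
verified by splitting the integral at $R-M$, bounding the contribution on $(0,R-M)$ by $e^{-2\kappa M}\|V\|_{L^1((0,\infty))}$ and that on $(R-M,R)$ by $\|V\|_{L^1((R-M,R))}\to 0$ (for fixed $M$, since $V\in L^1((0,\infty);dx)$), and then letting $M\to\infty$. This improves $\|u\,\psi^{(0)}_{0,\alpha}\|^2_{L^2((0,R))}$ to $o(e^{2\kappa R})$, so that the rank-one piece has $\cB_2$-norm $o(1)$, proving \eqref{4.39}; \eqref{4.39a} follows by the identical argument with $u_R,u$ replaced by $v_R,v$.

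For the trace-norm convergence \eqref{4.40}, the kernel of $[u_R(H^{(0)}_{(0,R),\alpha,\beta}-zI_{(0,R)})^{-1}v_R\oplus 0]-\overline{u(H^{(0)}_{(0,\infty),\alpha}-zI_{(0,\infty)})^{-1}v}$ splits analogously into a rank-one piece of trace norm $|\mathcal{C}_R(z)|\,\|u\,\psi^{(0)}_{0,\alpha}\|_{L^2((0,R))}\,\|v\,\psi^{(0)}_{0,\alpha}\|_{L^2((0,R))}=o(1)$, together with a tail piece of the form $A-P_RAP_R=(I_{(0,\infty)}-P_R)A+P_RA(I_{(0,\infty)}-P_R)$, where $A:=\overline{u(H^{(0)}_{(0,\infty),\alpha}-zI_{(0,\infty)})^{-1}v}\in\cB_1$ by \eqref{3.46} and $P_R$ denotes the orthogonal projection from $L^2((0,\infty);dx)$ onto $L^2((0,R);dx)$. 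Strong convergence $P_R\to I_{(0,\infty)}$ together with $A\in\cB_1$ force $\|(I_{(0,\infty)}-P_R)A\|_{\cB_1}+\|A(I_{(0,\infty)}-P_R)\|_{\cB_1}\to 0$ via the singular-value expansion of $A$ and dominated convergence on $\ell^1(\bbN)$. Then \eqref{4.41} is immediate from the continuity of ${\det}_{L^2((0,\infty);dx)}(I_{(0,\infty)}+\cdot)$ on $\cB_1(L^2((0,\infty);dx))$, and \eqref{4.42} follows from \eqref{4.38} and \eqref{4.40} via the Birman--Schwinger-type resolvent identities \eqref{63}, \eqref{60} exactly as in the proof of Lemma~\ref{l4.3}.
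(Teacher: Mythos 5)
Your proposal is correct and rests on the same key idea as the paper's proof -- the Krein-type rank-one comparison of Green's functions from Lemma \ref{l3.1a} -- but the execution differs in several genuine ways. The paper chains Lemmas \ref{l3.1a} and \ref{l3.2a} to write $G^{(0)}_{(0,R),\alpha,\beta}$ as the Dirichlet kernel $G^{(0)}_{(0,R),0,0}$ plus three rank-one corrections (cf.\ \eqref{4.43}--\eqref{4.44}), then reduces everything to Lemmas \ref{l4.1}--\ref{l4.3}, using the abstract Lemma \ref{l4.5} for the one correction term that is common to the finite-interval and half-line decompositions and the Volterra bounds \eqref{4.45}--\eqref{4.50} for the two that decay. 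You instead compare $G^{(0)}_{(0,R),\alpha,\beta}$ directly with $G^{(0)}_{(0,\infty),\alpha}$ (a single rank-one term) and dispose of the ``tail'' $A-P_RAP_R$ for $A=\ol{u\big(H^{(0)}_{(0,\infty),\alpha}-zI_{(0,\infty)}\big)^{-1}v}\in\cB_1$ by the singular-value expansion; this bypasses Lemma \ref{l4.5} entirely and, for the trace-norm statement \eqref{4.40}, also bypasses the square-root-of-resolvent and Bessel-kernel machinery of Lemma \ref{l4.2}, which is a real simplification. More importantly, you correctly identify that the crude bounds $|\mathcal{C}_R(z)|=O(e^{-2\kappa R})$ and $\|\psi^{(0)}_{0,\alpha}(z,\cdot)\|^2_{L^2((0,R))}=O(e^{2\kappa R})$ give only an $O(1)$ bound on the rank-one pieces in $\cB_2$ and $\cB_1$, and you supply the needed refinement $\int_0^R|V(x)|e^{2\kappa x}dx=o(e^{2\kappa R})$; the paper's assertion that the rank-one operators have norms of order $e^{-2\Im(z^{1/2})R}$ is, taken literally, an overstatement that implicitly relies on exactly this refinement, so your treatment is the more careful one. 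Two small caveats: for real $z<0$ and certain $\beta\in(\pi/2,\pi)$ the leading coefficient of $\sin(\beta)\psi^{(0)\prime}_{0,\alpha}(z,R)+\cos(\beta)\psi^{(0)}_{0,\alpha}(z,R)$ can degenerate, so you should (as the paper does in Lemmas \ref{l4.1} and \ref{l4.2}) first establish the limits for nonreal $z$, or for $z$ sufficiently negative, and extend via the resolvent identity; and for the density argument in \eqref{4.38} you should note the uniform bound on the resolvents (automatic for $z\in\bbC\setminus\bbR$, and supplied by \eqref{3.108} for $z$ sufficiently negative). Neither affects the substance of your argument.
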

\begin{proof}
Consecutively employing Lemmas \ref{l3.1a} and \ref{l3.2a}, we will reduce the proofs for Lemma \ref{l4.4} to those of Lemmas \ref{l4.1}--{l4.3} as follows:
\begin{align}
& G_{(0,R),\alpha,\beta} (z,x,x') = G_{(0,\infty),\alpha}^{(0)} (z,x,x')    \no \\
& \qquad + \f{\sin(\beta) \psi_+'(z,R) + \cos(\beta) \psi_+(z,R)}{\sin(\beta) 
\psi_{0,\alpha}'(z,R) + \cos(\beta) \psi_{0,\alpha}(z,R)} 
\psi_{0,\alpha}(z,x) \psi_{0,\alpha}(z,x')    \no \\
& \quad = G_{(0,\infty),0}^{(0)} (z,x,x')     \no \\
& \qquad + \f{\sin(\alpha) \psi_{0,0}'(z,0) + \cos(\alpha) \psi_{0,0}(z,0)}
{W(\psi_{0,0}(z,\cdot),\psi_+(z,\cdot))[\sin(\alpha) \psi_+'(z,0) + \cos(\beta) 
\psi_+(z,0)]} \psi_{+}(z,x) \psi_{+}(z,x')    \no \\
& \qquad + \f{\sin(\beta) \psi_+'(z,R) + \cos(\beta) \psi_+(z,R)}{\sin(\beta) 
\psi_{0,\alpha}'(z,R) + \cos(\beta) \psi_{0,\alpha}(z,R)} 
\psi_{0,\alpha}(z,x) \psi_{0,\alpha}(z,x')    \no \\
& \quad = G_{(0,R),0,0} (z,x,x')    \no \\
& \qquad + \f{\sin(\alpha) \psi_{0,0}'(z,0) + \cos(\alpha) \psi_{0,0}(z,0)}
{W(\psi_{0,0}(z,\cdot),\psi_+(z,\cdot))[\sin(\alpha) \psi_+'(z,0) + \cos(\beta) 
\psi_+(z,0)]} \psi_{+}(z,x) \psi_{+}(z,x')    \no \\
& \qquad - \f{\psi_+(z,R)}{W(\psi_{0,0}(z,\cdot),\psi_+(z,\cdot)) \psi_{0,0}(z,R)}
\psi_{0,0}(z,x) \psi_{0,0}(z,x')    \no \\
& \qquad + \f{\sin(\beta) \psi_+'(z,R) + \cos(\beta) \psi_+(z,R)}{\sin(\beta) 
\psi_{0,\alpha}'(z,R) + \cos(\beta) \psi_{0,\alpha}(z,R)} 
\psi_{0,\alpha}(z,x) \psi_{0,\alpha}(z,x'),  \quad x, x' \in (0,R).   \lb{4.43}
\end{align}
Similarly, one has 
\begin{align}
& G_{(0,\infty),\alpha}^{(0)} (z,x,x') = G_{(0,\infty),0}^{(0)} (z,x,x')     \no \\
& \qquad + \f{\sin(\alpha) \psi_{0,0}'(z,0) + \cos(\alpha) \psi_{0,0}(z,0)}
{W(\psi_{0,0}(z,\cdot),\psi_+(z,\cdot))[\sin(\alpha) \psi_+'(z,0) + \cos(\beta) 
\psi_+(z,0)]} \psi_{+}(z,x) \psi_{+}(z,x'),    \no \\ 
& \hspace*{9cm}   x, x' \in (0,\infty).    \lb{4.44}
\end{align} 
One notes that the second terms on the right-hand sides in \eqref{4.43} and \eqref{4.44} are identical except for the underlying interval $(0,R)$ versus $(0,\infty)$. In this context we refer to the subsequent, elementary result 
\eqref{4.59} in Lemma \ref{l4.5}, which deals with precisely such situations abstractly.

Next, one observes that iterating the Volterra integral equations \eqref{3.16} 
and \eqref{3.21} yields the bounds (cf.\ also \cite[Sects.\ I.2, I.3]{CS89}), 
\begin{align}
& |\psi_{0,0}(z,R)| \leq C \f{R}{1 + |z^{1/2}| R} e^{\Im(z^{1/2}) R},   
\lb{4.45} \\
& \big|\psi_{0,0}(z,R) - \psi_{(0,0}^{(0)}(z,R)\big| \leq C \f{R e^{\Im(z^{1/2}) R}}
{1 + |z^{1/2}| R} \int_0^R dx \, \f{x}{1 + |z^{1/2}| x} |V(x)|,   \\
& |\psi_+ (z,R)| \leq C e^{-\Im(z^{1/2}) R},   \\
& |\psi_+' (z,R)| \leq C |z|^{1/2} e^{-\Im(z^{1/2}) R},   \\
& \big|\psi_+ (z,R) - \psi_+^{(0)}(z,R)\big| \leq C \f{e^{\Im(z^{1/2}) R}}
{|z^{1/2}|} \int_R^{\infty} dx \, |V(x)|,  \quad z \in \bbC\backslash\{0\},    \\
& \big|\psi_+' (z,R) - \psi_+^{(0),\prime}(z,R)\big| \leq C e^{\Im(z^{1/2}) R} 
\int_R^{\infty} dx \, |V(x)|,   \lb{4.50}
\end{align}
where we recall our convention of $\Im(z^{1/2}) \geq 0$, $z\in\bbC$. 
In addition one notes that $W(\psi_{0,0}(z,\cdot),\psi_+(z,\cdot))$ is 
$x$-independent and hence just a function of $z$.  

Due to the estimates \eqref{4.45}--\eqref{4.50}, the third and fourth terms on the right-hand side of \eqref{4.43} generate rank-one operators whose 
norms exhibit a decay of the order $e^{-2\Im(z^{1/2}) R}$ as $R\to\infty$. 

At this point one can multiply \eqref{4.43} and \eqref{4.44} on the left with 
$u_R(x)$ and $u(x)$ and reduce the proof of \eqref{4.38} and \eqref{4.39} to  
\eqref{4.2} and \eqref{4.3} in Lemma \ref{l4.1} together with the exponential decay of the two rank-one operators generated by the third and fourth term on the right-hand side of \eqref{4.43}. Analogous observations apply to 
\eqref{4.39a}.

Similarly, multiplying \eqref{4.43} and \eqref{4.44} on the left with 
$u_R(x)$ and $u(x)$ and on the right with $v_R(x')$ and $v(x')$, respectively, 
one can prove \eqref{4.40} and \eqref{4.41} by reducing them to 
\eqref{11} and \eqref{12} in Lemma \ref{l4.2}. 

Finally, \eqref{4.42} follows in the same vain from \eqref{4.27} in 
Lemma \ref{l4.3}. 
\end{proof}

\begin{remark}
For subsequent purposes, we decided to provide a purely operator theoretic proof of the convergence of Fredholm determinants in \eqref{4.41}. An elementary alternative proof based on Jost functions and the Wronski relations \eqref{03.4} and \eqref{03.11} is also possible, see \cite{LS11}.
\end{remark}

In order to deal with the the second terms on the right-hand sides in 
\eqref{4.43} and \eqref{4.44} we formulate the following elementary abstract result: Let $\cH$ be a complex, separable Hilbert space and $\{P_n\}_{n\in \bbN}$ a sequence of orthogonal projections in $\cH$ converging strongly to the identity as $n\to\infty$,
\begin{equation}
P_n=P_n^2 = P_n^*, \; n \in\bbN, \quad \slim_{n\to\infty} P_n = I_{\cH}. 
\lb{4.51}
\end{equation}
We also introduce 
\begin{equation}
P_n^\bot = I_{\cH} - P_n, \quad \cH_n = P_n \cH, \quad 
\cH_n^\bot = P_n^\bot \cH, \quad n\in\bbN, 
\end{equation}
and write for any $h \in \cH$, and $n \in\bbN$, 
\begin{equation}
h = \langle h_n, h_n^\bot \rangle = h_n \oplus h_n^\bot\, \text{ with respect to } \, \cH = \cH_n \oplus \cH_n^\bot, 
\end{equation}
with the standard convention for the inner product,
\begin{equation}
(h,k)_{\cH} = \big(\langle h_n,h_n^\bot \rangle, 
\langle k_n, k_n^\bot \rangle\big)_{\cH} = 
(h_n,k_n)_{\cH_n} + (h_n^\bot, k_n^\bot)_{\cH^\bot}, \quad 
h, k \in \cH.    \lb{4.54}
\end{equation} 
Finally, for $f = \langle f_n, f_n^\bot\rangle, g = \langle g_n, g_n^\bot \rangle \in \cH$, we introduce the rank-one operator $T$ in $\cH$ by
\begin{equation} 
T h = (f, h)_{\cH} g, \quad h = (h_n, h_n^\bot) \in\cH,    \lb{4.55}
\end{equation}
and for each $n\in\bbN$, the rank-one operator $T_n$ in $\cH_n$ by
\begin{equation}
T_n h_n = (f_n, h_n) g_n, \quad n \in\bbN.   \lb{4.56}
\end{equation}

\begin{lemma} \lb{l4.5}
With the notation introduced in \eqref{4.51}--\eqref{4.56}, one has
\begin{equation}
\big[T-(T_n\oplus 0)\big] h= ((I-P_n)g,h)_{\cH}g + (P_nf, h)_{\cH}(I-P_n)g, 
\quad h \in\cH,
\end{equation}
and
\begin{align}
\begin{split} 
\|T-(T_n\oplus 0)\|_{\cB_1(\cH)} & \leq \|(I-P_n)f\|_{\cH}\|g\|_{\cH}
+ \|P_nf\|_{\cH}\|(I-P_n)g\|_{\cH}    \\
& \leq 2 \|f\|_{\cH} \|g\|_{\cH}.
\end{split} 
\end{align}
In particular, 
\begin{equation}
\lim_{n\rightarrow \infty}\|T_n\oplus 0 - T\|_{\cB_1(\cH)} = 0.   \lb{4.59}
\end{equation}
\end{lemma}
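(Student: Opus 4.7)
The plan is to compute $(T_n \oplus 0)h$ explicitly and reorganize the difference $Th - (T_n \oplus 0)h$ into two rank-one pieces whose trace norms are controlled by $\|(I - P_n)f\|_\cH$ and $\|(I - P_n)g\|_\cH$ respectively; then \eqref{4.59} follows from strong convergence $P_n \to I_\cH$.

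First, I would unwind the definitions. For $h \in \cH$, writing $h = P_n h + P_n^{\bot} h$ and using \eqref{4.56} together with the self-adjointness and idempotence of $P_n$, one has
\begin{equation*}
(T_n \oplus 0) h = T_n (P_n h) = (P_n f, P_n h)_{\cH_n} \, (P_n g) = (P_n f, h)_{\cH}\, P_n g.
\end{equation*}
Subtracting this from $Th = (f,h)_\cH g$ and adding and subtracting $(P_n f, h)_\cH g$ yields
\begin{equation*}
[T - (T_n \oplus 0)] h = ((I - P_n) f, h)_\cH \, g + (P_n f, h)_\cH \, (I - P_n) g,
\end{equation*}
which is the asserted identity (modulo an evident typo of $g$ for $f$ in the first inner product).

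Next, each summand on the right is a rank-one operator of the form $h \mapsto (\varphi,h)_\cH \psi$, whose trace norm is precisely $\|\varphi\|_\cH \|\psi\|_\cH$. The triangle inequality in $\cB_1(\cH)$ therefore gives
\begin{equation*}
\|T - (T_n \oplus 0)\|_{\cB_1(\cH)} \leq \|(I - P_n)f\|_\cH \|g\|_\cH + \|P_n f\|_\cH \|(I - P_n) g\|_\cH,
\end{equation*}
and the cruder estimate $\|(I - P_n)f\|_\cH \leq \|f\|_\cH$ and $\|P_n f\|_\cH \leq \|f\|_\cH$ give the second line, $2\|f\|_\cH \|g\|_\cH$.

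Finally, the strong convergence $P_n \to I_\cH$ from \eqref{4.51} implies $\|(I - P_n)f\|_\cH \to 0$ and $\|(I - P_n)g\|_\cH \to 0$ as $n \to \infty$. Combined with the uniform bound $\|P_n f\|_\cH \leq \|f\|_\cH$, the right-hand side of the trace norm estimate tends to zero, establishing \eqref{4.59}. There is no real obstacle here: the argument is pure bookkeeping for rank-one operators, with the only conceptual point being the splitting trick that isolates the two ``directions'' in which $(T_n \oplus 0)$ fails to match $T$, namely the missing $P_n^\bot$-component of $f$ on the input side and of $g$ on the output side.
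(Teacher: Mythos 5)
Your proposal is correct and follows essentially the same route as the paper: identify $(T_n\oplus 0)h=(P_nf,h)_{\cH}P_ng$, split the difference into the two rank-one pieces $((I-P_n)f,\cdot)_{\cH}g$ and $(P_nf,\cdot)_{\cH}(I-P_n)g$, bound each trace norm by the product of the vector norms, and conclude via $\slim_{n\to\infty}P_n=I_{\cH}$. You also correctly spotted that the first display of the lemma should read $((I-P_n)f,h)_{\cH}g$ rather than $((I-P_n)g,h)_{\cH}g$, consistent with the subsequent line of the paper's own proof.
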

\begin{proof}
One computes for $h=h_n\oplus h_n^{\bot}\in \cH$
\begin{align}
\big[T-(T_n\oplus 0)\big]h&=Th-(T_n\oplus 0)h   \no \\
&=(f, h )_{\cH}g-\big[(f_n, h_n )_{\cH_n}g_n\oplus 0 \big]   \no \\
&=\big[(f_n, h_n)_{\cH_n}+(f_n^{\bot},h_n^{\prime})_{\cH_n^{\bot}} \big]g-\big[(f_n, h_n)_{\cH_n}\oplus 0 \big]   \no \\
&=(f_n^{\bot},h_n^{\bot})_{\cH_n^{\bot}}g_n \oplus \big[(f_n, h_n )_{\cH_n}+(f_n^{\bot},h_n^{\bot} )_{\cH_n^{\bot}} \big]g_n^{\bot}    \no \\
&=(f_n^{\bot}, h_n^{\bot})_{\cH_n^{\bot}}g+\big[0\oplus (f_n, h_n)_{\cH_n}g_n^{\bot} \big]    \no \\
&=((I-P_n)g,h)_{\cH}g+(P_nf,h)_{\cH}(I-P_n)g.
\end{align}
Thus,
\begin{equation}
T-(T_n\oplus 0)=((I-P_n)f,\cdot)_{\cH}g+(P_nf, \cdot )_{\cH}(I-P_n)g,
\end{equation}
implying 
\begin{align}
\|T-(T_n\oplus0)\|_{\cB_1(\cH)}&\leq \|((I-P_n)f,\cdot)_{\cH}g \|_{\cB_1(\cH)} +\no\\
&\quad+ \|(P_nf, \cdot )_{\cH}(I-P_n)g \|_{\cB_1(\cH)}   \no \\
&\leq \|(I-P_n)f\|_{\cH}\|g\|_{\cH}+\|P_nf\|_{\cH}\|(I-P_n)g\|_{\cH}.\lb{C1a}
\end{align}
The two terms in \eqref{C1a} go to zero since $\slim_{n\to\infty} P_n=I_{\cH}$ 
and $\|P_nf\|_{\cB(\cH)}=1$, $n\in\bbN$. 
\end{proof}

\begin{remark} \lb{r4.6}
The convergence results of Lemma \ref{l4.1}\,(i) and Lemma \ref{l4.3} and similarly those of \eqref{4.38} and \eqref{4.42} in Lemma \ref{l4.4} 
are similar in spirit to \cite[Theorem\ 6]{SW93} (c.f. also \cite{BEWZ93}, 
\cite[Theorem\ 5]{SW93}, \cite[Theorem\ 4]{SW95}, and \cite[Lemma\ 4]{Te08}).  More specifically, the authors show that $A_n\rightarrow A$ in the sense of 
\textit{generalized strong convergence}, that is, 
\begin{equation} 
\slim_{n\to\infty}(A_n - z I_{(a_n,b_n)})^{-1}P_n = (A - z I_{(a,b)})^{-1}, 
\quad z\in \C\backslash  \R, 
\end{equation} 
where $A_n$ (resp., $A$) is the self-adjoint realization of a (rather general) 
Sturm--Liouville differential expression in $L^2((a_n,b_n);r(x)dx)$ (resp., 
$L^2((a,b);r(x)dx)$), with $a_n\downarrow a$ (or $a_n=a$ identically), 
$b_n\uparrow b$, and $P_n$ denotes the orthogonal projection in 
$L^2((a,b);r(x)dx)$ onto $L^2((a_n,b_n);r(x)dx)$,  
with the latter identified with a closed subspace of $L^2((a,b);r(x)dx)$. 
\end{remark}

Next, we briefly re-examine the uniform boundedness from below of 
$H_{(0,R),\alpha,\beta}$ with respect to $R$ (cf.\ \eqref{3.108}).

\begin{lemma} \lb{l4.6a}
Assume \eqref{3.8}. \\
$(i)$ There exists $E(\alpha,\beta) < 0$, such that 
for all $z \leq E(\alpha,\beta)$ and all $R>0$, 
\begin{equation}
\Big\|\ol{u_R \big(H_{(0,R),\alpha,\beta}^{(0)} - z I_{(0,R)}\big)^{-1} v_R} 
\oplus 0\Big\|_{\cB_1 (L^2((0,\infty); dx))} \leq C_{\alpha,\beta} |z|^{-1/2}.   \lb{4.66} 
\end{equation}
for some constant $C_{\alpha, \beta}>0$, independent of $R>0$. In particular,
\begin{equation}
H_{(0,R),\alpha,\beta} \geq c_{\alpha,\beta} I_{(0,R)}    \lb{4.67}
\end{equation} 
for some $c_{\alpha,\beta} \leq 0$ independent of $R>0$. \\ 
$(ii)$ There exists $E(\alpha) < 0$, such that for all $z \leq E(\alpha)$, 
\begin{equation}
\Big\|\ol{u \big(H_{(0,\infty),\alpha}^{(0)} - z I_{(0,R)}\big)^{-1} v}
\Big\|_{\cB_1 (L^2((0,\infty); dx))} \leq C_{\alpha} |z|^{-1/2}.     \lb{4.68}
\end{equation}
for some constant $C(\alpha)>0$. In particular,
\begin{equation}
H_{(0,\infty),\alpha} \geq c_{\alpha} I_{(0,\infty)}    \lb{4.69}
\end{equation} 
for some $c_{\alpha} \leq 0$. 
\end{lemma}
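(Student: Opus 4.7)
The plan is to establish part $(i)$ first, starting from the Dirichlet case $\a=\b=0$ and extending to general separated boundary conditions via rank-one correction formulas, then to handle part $(ii)$ by an analogous but simpler single-correction argument, and finally to deduce the uniform lower bounds \eqref{4.67}, \eqref{4.69} from the trace-norm estimates via the Birman--Schwinger principle.

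For the Dirichlet case in part $(i)$, I would write $z=-E$ with $E>0$ large and factor the Birman--Schwinger operator as
\begin{equation*}
\ol{u_R\bigl(H^{(0)}_{(0,R),0,0}+E I_{(0,R)}\bigr)^{-1}v_R}
= \bigl[u_R \bigl(H^{(0)}_{(0,R),0,0}+E I_{(0,R)}\bigr)^{-1/2}\bigr]
\bigl[\bigl(H^{(0)}_{(0,R),0,0}+E I_{(0,R)}\bigr)^{-1/2}v_R\bigr],
\end{equation*}
bounding the $\cB_1$ norm by the product of the two Hilbert--Schmidt norms. The squared $\cB_2$ norm of the first factor equals $\int_0^R|V(x)|G^{(0)}_{(0,R),0,0}(-E,x,x)\,dx$, and by the domain monotonicity \eqref{3.33a} combined with the explicit formula \eqref{3.23b} one has $G^{(0)}_{(0,R),0,0}(-E,x,x)\le G^{(0)}_{(0,\infty),0}(-E,x,x)=(1-e^{-2E^{1/2}x})/(2E^{1/2})\le 1/(2E^{1/2})$, so the HS norm squared is at most $\|V\|_{L^1((0,\infty);dx)}/(2E^{1/2})$, independent of $R$. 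The same bound holds for the second factor, yielding \eqref{4.66} in the Dirichlet case with $C_{0,0}=\|V\|_{L^1}/2$.

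To pass to general $\a,\b\in[0,\pi)$, I would apply Lemmas \ref{l3.1a} and \ref{l3.2a} with $V=0$ (as in the derivation of \eqref{4.43}) to decompose
\begin{equation*}
G^{(0)}_{(0,R),\a,\b}(z,x,x')=G^{(0)}_{(0,R),0,0}(z,x,x')+\sum_{j=1}^{2}c_j(z,R)\,\phi_j(z,x)\,\phi_j(z,x'),
\end{equation*}
with each $\phi_j(z,\cdot)\in\{\psi^{(0)}_{0,\a}(z,\cdot),\psi^{(0)}_+(z,\cdot)\}$. The sandwiched rank-one term has $\cB_1$ norm $|c_j(-E,R)|\int_0^R|V(x)||\phi_j(-E,x)|^2\,dx$, and a direct inspection of \eqref{3.14}, \eqref{3.15}, \eqref{3.19}, \eqref{3.20} at $z=-E$ shows that $|c_j(-E,R)|$ decays like $e^{-2E^{1/2}R}$ with prefactor polynomial in $E^{1/2}$, while $|\phi_j(-E,x)|^2\le C_\a e^{2E^{1/2}x}$ on $(0,R)$. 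The exponentials balance, giving an $R$-independent bound of the form $C_{\a,\b}\|V\|_{L^1}/E^{1/2}$, which together with the Dirichlet contribution proves \eqref{4.66}. For part $(ii)$, I would apply Lemma \ref{l3.2a} with $V=0$ (equivalently \eqref{126}) to decompose $\ol{u(H^{(0)}_{(0,\infty),\a}+E)^{-1}v}$ as the Dirichlet half-line Birman--Schwinger operator plus a single rank-one correction with coefficient $\sin(\a)/(\cos(\a)-E^{1/2}\sin(\a))$, which is $O(E^{-1/2})$ for $\a\neq 0$ and vanishes for $\a=0$; the Dirichlet term is bounded directly via \eqref{3.23b} (bypassing the domain-monotonicity step), and the rank-one correction has $\cB_1$ norm at most $C_\a\|V\|_{L^1}/E^{1/2}$, giving \eqref{4.68}.

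Finally, \eqref{4.67} and \eqref{4.69} would follow from the Birman--Schwinger principle: once $E$ is large enough that the trace-norm bound in \eqref{4.66} or \eqref{4.68} is $<1$, the operator $I_{(0,R)}+\ol{u_R(H^{(0)}+E)^{-1}v_R}$ is invertible, and the second resolvent identities \eqref{63}, \eqref{60} show $-E\in\rho(H_{(0,R),\a,\b})\cap\rho(H_{(0,\infty),\a})$; since the same bound holds for every $E'\ge E$, we obtain $\sigma(H)\cap(-\infty,-E]=\emptyset$, hence $H\ge -E\cdot I$ with threshold depending only on $\a,\b$, and $\|V\|_{L^1}$. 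The main technical obstacle is the exponential cancellation in the general-$\a,\b$ step: the growth $|\phi_j(-E,x)|^2\sim e^{2E^{1/2}x}$ on $(0,R)$ must be exactly offset by the $e^{-2E^{1/2}R}$ decay of $|c_j(-E,R)|$, with leading-order prefactors that depend sensitively on whether $\a$ and/or $\b$ equal zero, so the four sub-cases need to be verified separately.
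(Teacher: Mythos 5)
Your argument is correct, and its endgame --- once the $\cB_1$-bound in \eqref{4.66} is $<1$, invert $I_{(0,R)}+\ol{u_R(\cdots)v_R}$ and read off membership in the resolvent set from the resolvent identity \eqref{63} --- is exactly the paper's. The route to the trace-norm estimate, however, is genuinely different in two places. For the trace norm itself you factor through $(H^{(0)}+E)^{-1/2}$ and use Cauchy--Schwarz in $\cB_2$; the paper instead notes that $u_R=\sgn(V_R)v_R$ with $\sgn(V_R)$ unitary, so the $\cB_1$-norm equals that of the \emph{nonnegative} operator $v_R\big(H^{(0)}_{(0,R),\alpha,\beta}-z\big)^{-1}v_R$ (for $z$ below the $R$-uniform bottom of the free spectrum), whence trace norm $=$ trace $=\int_0^R|V(x)|\,G^{(0)}_{(0,R),\alpha,\beta}(z,x,x)\,dx$ --- no square-root kernels needed. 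More substantively, the paper estimates the diagonal $G^{(0)}_{(0,R),\alpha,\beta}(z,x,x)$ for general $(\alpha,\beta)$ \emph{directly} from \eqref{3.6}, \eqref{3.14}, \eqref{3.15}: the growth $e^{E^{1/2}x}\cdot e^{E^{1/2}(R-x)}$ of $\psi^{(0)}_{0,\alpha}\psi^{(0)}_{R,\beta}$ cancels against the $e^{E^{1/2}R}$ of the Wronskian for every $x$ simultaneously, leaving a ratio of prefactors that is $O(|z|^{-1/2})$ uniformly in $x\in[0,R]$ and $R>0$. You instead isolate the Dirichlet piece (where domain monotonicity \eqref{3.33a} settles everything) and treat the boundary conditions as rank-one corrections, balancing the $e^{-2E^{1/2}R}$ decay of the coefficients against the $e^{2E^{1/2}R}$ growth of $\int|V|\,|\phi_j|^2$; the two cancellations are the same phenomenon in different clothing. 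The paper's packaging absorbs your four sub-cases into a single constant $C(\alpha,\beta)$ and is shorter, while yours makes explicit where the boundary-condition dependence enters and recycles Lemmas \ref{l3.1a} and \ref{l3.2a}. One bookkeeping caveat: chaining those lemmas from $(\alpha,\beta)$ down to $(0,0)$ on $(0,R)$ produces three rank-one terms (as in \eqref{4.43}), not two, unless you invoke a genuine two-dimensional Krein formula with a full $2\times 2$ coefficient matrix; this changes nothing in the estimate.
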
 
\begin{proof}
Combining \eqref{3.6}, \eqref{3.13}, \eqref{3.14}, \eqref{3.15}, \eqref{3.19}, and 
\eqref{3.20}, one infers that 
\begin{align}
& \Big|G_{(0,R),\alpha,\beta}^{(0)} (z,x,x)\Big| \leq C(\alpha,\beta) |z|^{-1/2}, \quad 
x \in [0,R], \; R>0, \; z < E(\alpha,\beta),    \\[1mm] 
& \Big|G_{(0,\infty),\alpha}^{(0)} (z,x,x)\Big| \leq C(\alpha) |z|^{-1/2}, \quad 
x \in [0,\infty), \; z < E(\alpha) 
\end{align}
for some constants $C(\alpha,\beta)>0$, $C(\alpha) > 0$. 

Thus, using $u = \sgn(V) v$, with $\sgn(V(x) = 1$ if $V(x) \geq 0$ a.e., and $\sgn(V(x)) = -1$ 
if $V(x) < 0$ a.e., rendering $\sgn(V)$ a unitary operator of multiplication in 
$L^2((0,\infty); dx)$, and analogously for $u_R = \sgn(V_R) v_R$, one infers for $z<0$ 
sufficiently negative, that 
\begin{align}
& \Big\|u_R \big(H_{(0,R),\alpha,\beta}^{(0)} - z I_{(0,R)})^{-1} v_R
\Big\|_{\cB_1(L^2((0,R); dx))}    \no \\
& \quad = \Big\|v_R \big(H_{(0,R),\alpha,\beta}^{(0)} 
- z I_{(0,R)})^{-1} v_R\Big\|_{\cB_1(L^2((0,R); dx))}    \no \\
& \quad = {\tr}_{L^2((0,R); dx)} \Big(v_R \big(H_{(0,R),\alpha,\beta}^{(0)} 
- z I_{(0,R)}\big)^{-1} v_R\Big)   \no \\
& \quad = \int_0^R dx \, |V(x)| G_{(0,R),\alpha,\beta}^{(0)} (z,x,x)   \no \\
& \quad \leq C(\alpha,\beta) \int_0^R dx \, |V(x)| \, |z|^{-1/2}, 
\end{align}
and analogously, for $z<0$ sufficiently negative,
\begin{equation}
\Big\|u \big(H_{(0,\infty),\alpha}^{(0)} - z I_{(0,\infty)})^{-1} v
\Big\|_{\cB_1(L^2((0,\infty); dx))}  \leq C(\alpha) \int_0^\infty dx \, |V(x)| \, |z|^{-1/2}, 
\end{equation}
proving \eqref{4.66} and \eqref{4.68}. 

Thus, for $z \leq c_{\alpha,\beta}$, for some $c_{\alpha,\beta} \leq 0$ independent 
of $R>0$,
\begin{equation}
\Big[I_{(0,\infty)}  
+\ol{u_R(H_{(0,R),0,0}^{(0)}-z I_{(0,R)})^{-1}v_R}\oplus 0 \Big]^{-1} \in 
\cB \big(L^2((0,\infty); dx)\big), 
\end{equation}
proving \eqref{4.67}, employing the resolvent equation \eqref{63}. The lower bound 
\eqref{4.69} is proved analogously (but also follows from the estimate \eqref{2.124} 
with $(0,R)$ replaced by $(0,\infty)$, cf., e.g., \cite[eq.\ (2.7.8)]{Sc81}). 
\end{proof}

We conclude this section with the following observation that will play an important role in Section \ref{s5} later on:

\begin{remark} \lb{r4.7}
We emphasize that our choice of $H_{(0,R),0,0}^{(0)}$ and 
$H_{(0,\infty),0}^{(0)}$ in Lemmas \ref{l4.1} and \ref{l4.2} is of no importance.
In fact, using the general inequalities \eqref{3.33a} and \eqref{3.33b} (for 
$q = 1/2$), and iterating the corresponding Volterra integral equations 
\eqref{3.16} and \eqref{3.21} to obtain standard bounds on 
$\psi_{0,0}(z,\cdot)$, $\psi_{R,0}(z,\cdot)$, and $\psi_+(z,\cdot)$ in terms of those of $\psi_{0,0}^{(0)}(z,\cdot)$, 
$\psi_{R,0}^{(0)}(z,\cdot)$, and $\psi_+^{(0)}(z,\cdot)$ with certain $z$-dependent constants shows that Lemmas \ref{l4.1} and \ref{l4.2} extend 
to the case where $H_{(0,R),0,0}^{(0)}$ and $H_{(0,\infty),0}^{(0)}$ are replaced by general Schr\"odinger operators $H_{(0,R),0,0}$ and 
$H_{(0,\infty),0}$ as long as their underlying potentials $W$ satisfy  
$W \in L^1((0,R); dx)$ and $W \in L^1((0,\infty); dx)$, respectively. 

Using again \eqref{4.43} and \eqref{4.44} this observation extends to 
$H_{(0,R),\alpha,\beta}^{(0)}$ and $H_{(0,\infty),\alpha}^{(0)}$
in Lemma \ref{l4.4}. Hence adding a potential $W$ satisfying   
$W \in L^1((0,R); dx)$ and $W \in L^1((0,\infty); dx)$, respectively, 
to $H_{(0,R),\alpha,\beta}^{(0)}$ and $H_{(0,\infty),\alpha}^{(0)}$ will not 
alter the conclusions in Lemma \ref{l4.4}. Put differently, our methods immediately apply to the case where the ``unperturbed'' operator contains a background potential $W$ (with 
$W \in L^1((0,\infty); dx)$). 
\end{remark}

\section{Weak Convergence of Spectral Shift Functions}  \lb{s5}

In this section we prove our principal new results, the weak convergence 
of the absolutely continuous measures 
$\xi\big(\lambda; H_{(0,R),\alpha,\beta}, 
H_{(0,R),\alpha,\beta}^{(0)}\big) d\lambda$, associated 
with the pair of self-adjoint operators 
$\big(H_{(0,R),\alpha,\beta}, H_{(0,R),\alpha,\beta}^{(0)}\big)$, to the 
absolutely continuous measure  
$\xi\big(\lambda; H_{(0,\infty),\alpha}, 
H_{(0,\infty),\alpha}^{(0)}\big) d\lambda$, associated with the pair 
$\big(H_{(0,\infty),\alpha}, H_{(0,\infty),\alpha}^{(0)}\big)$. Our approach will be 
based on convergence of underlying Fredholm determinants combined with certain measure theoretic facts. 

We will freely use the facts recorded (and notations employed) on 
spectral shift functions in Appendix \ref{sA} and, in particular, note that 
\eqref{2.7}, \eqref{2.8} imply
\begin{align}
& - \f{d}{dz} {\det}_{L^2((0,\infty);dx)} \Big(I_{(0,\infty)} 
+ \ol{u \big(H_{(0,\infty),\alpha} - z I_{(0,\infty)}\big)^{-1} v}\Big)   \no \\
& \quad = {\tr}_{L^2((0,\infty);dx)}\big ((H_{(0,\infty),\alpha}-z)^{-1} 
- \big(H_{(0,\infty),\alpha}^{(0)}-z\big)^{-1}\big)    \lb{5.0}  \\
& \quad = - \int_{\R}\frac{\xi\big(\lambda; H_{(0,\infty),\alpha}, 
H_{(0,\infty),\alpha}^{(0)}\big) d\lambda}{(\lambda-z)^2},    \quad 
z\in \C\backslash \R.       \no 
\end{align}
Combined with \eqref{3.47}, this represents a result due to Buslaev and Faddeev 
\cite{BF60} in the Dirichlet case $\alpha = 0$. We will also use the analog of 
\eqref{5.0} with $(0,\infty)$ replaced by $(0,R)$ and $u, v$ replaced by $u_R, v_R$, etc.  

We start with the following basic result:

\begin{lemma}\lb{l5.1}
Assume \eqref{3.8} and let $a,z\in \bbC \backslash \bbR$. Then 
\begin{align}
& \lim_{R\rightarrow \infty}\ln \left(\frac{{\det}_{L^2((0,R);dx)} \Big(I_{(0,R)} 
+ \ol{u_R \big(H_{(0,R),\alpha,\beta} - z I_{(0,R)}\big)^{-1} v_R}\Big)}{{\det}_{L^2((0,R);dx)} \Big(I_{(0,R)} 
+ \ol{u_R \big(H_{(0,R),\alpha,\beta} - a I_{(0,R)}\big)^{-1} v_R}\Big)} \right)
\no \\
& \quad =\ln \left(\frac{{\det}_{L^2((0,\infty);dx)} \Big(I_{(0,\infty)} 
+ \ol{u \big(H_{(0,\infty),\alpha} - z I_{(0,\infty)}\big)^{-1} v}\Big)}{{\det}_{L^2((0,\infty);dx)} \Big(I_{(0,\infty)} 
+ \ol{u \big(H_{(0,\infty),\alpha} - a I_{(0,\infty)}\big)^{-1} v}\Big)}\right),    \lb{30} \\
& \lim_{R\rightarrow \infty}\int_{\R}\frac{\xi\big(\lambda; H_{(0,R),\alpha,\beta}, 
H_{(0,R),\alpha,\beta}^{(0)}\big) d\lambda}{(\lambda -a )(\lambda-z)^n}=\int_{\R}\frac{\xi\big(\lambda; H_{(0,\infty),\alpha}, 
H_{(0,\infty),\alpha}^{(0)}\big) d\lambda}{(\lambda-a)(\lambda-z)^n},\quad n\in \N.    \lb{31}
\end{align}
\end{lemma}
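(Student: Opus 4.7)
Let $D_R(z)$ and $D_\infty(z)$ denote the finite- and infinite-interval Fredholm determinants on the left-hand side of \eqref{30}. By Theorems \ref{t3.3} and \ref{t3.4}, $D_R$ and $D_\infty$ are ratios of Wronskians, hence entire in $z$ and nonvanishing on the resolvent sets $\bbC\backslash\sigma(H_{(0,R),\alpha,\beta})$ and $\bbC\backslash\sigma(H_{(0,\infty),\alpha})$, which in particular both contain $\bbC\backslash\bbR$. Lemma \ref{l4.4} (eq.\ \eqref{4.41}) supplies the pointwise convergence $D_R(z) \to D_\infty(z)$ on $\bbC\backslash[0,\infty)$.

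The plan is to reduce both \eqref{30} and \eqref{31} to this determinant convergence by means of the identity (obtained by integrating \eqref{5.0} along a contour in $\bbC\backslash\bbR$ from $a$ to $z$, with the branch of $\ln$ pinned by $\ln 1=0$ at $z=a$)
\begin{equation*}
\ln\!\left(\frac{D_R(z)}{D_R(a)}\right) = (z-a)\!\int_{\R}\!\frac{\xi_R(\lambda)\,d\lambda}{(\lambda-a)(\lambda-z)}, \qquad z,a\in\bbC\backslash\bbR,
\end{equation*}
where $\xi_R := \xi(\,\cdot\,;H_{(0,R),\alpha,\beta},H_{(0,R),\alpha,\beta}^{(0)})$, and its half-line analog. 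Writing $\Psi_R(z) := \int_{\R}\xi_R(\lambda)\,d\lambda/[(\lambda-a)(\lambda-z)]$ and $\Psi_\infty$ analogously, the $n=1$ case of \eqref{31} is precisely $\Psi_R(z)\to\Psi_\infty(z)$, and multiplication by $z-a\neq 0$ then yields \eqref{30}. For $n\geq 2$, differentiation in $z$ gives $\Psi_R^{(n-1)}(z) = (n-1)!\int_{\R}\xi_R(\lambda)\,d\lambda/[(\lambda-a)(\lambda-z)^n]$, so the Weierstrass/Cauchy principle (uniform convergence of analytic functions on compacta forces the same for all derivatives) reduces \eqref{31} to locally uniform convergence $\Psi_R\to\Psi_\infty$ on $\bbC\backslash\bbR$.

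The latter is to be established by Vitali's theorem. For pointwise convergence near $a$, choose a small disk $U$ about $a$ (in one half-plane) on which both $D_R(z)/D_R(a)$, for all large $R$, and $D_\infty(z)/D_\infty(a)$ lie in $\{w:\Re w>0\}$; the principal branch of $\log$ is then continuous on $U$ and the displayed identity transfers the convergence of determinants to $\Psi_R(z)\to\Psi_\infty(z)$ on $U$. For local uniform boundedness one estimates
\begin{equation*}
|\Psi_R(z)|\leq C(K)\,\|\xi_R\|_{L^1(\R;\,d\lambda/(1+\lambda^2))}, \qquad z\in K\Subset\bbC\backslash\bbR,
\end{equation*}
and combines this with the standard bound $\|\xi(\,\cdot\,;A,B)\|_{L^1(d\lambda/(1+\lambda^2))}\leq \|(A-i)^{-1}-(B-i)^{-1}\|_{\cB_1}$; the right-hand side with $A=H_{(0,R),\alpha,\beta}$, $B=H_{(0,R),\alpha,\beta}^{(0)}$ is uniformly bounded in $R$ via the resolvent identity and the Birman--Schwinger convergence \eqref{4.40} of Lemma \ref{l4.4} (see also Remark \ref{r4.7}).

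The main expected obstacle is this last, uniform-in-$R$ trace-norm bound; every other ingredient (Vitali's theorem, the principal-branch argument near $a$, the Cauchy estimates for $n\geq 2$, and the Fredholm determinant convergence) is either routine or already supplied by Sections \ref{s3}--\ref{s4}. A secondary subtlety is the global branch of $\ln$ in the displayed identity across the two simply-connected components of $\bbC\backslash\bbR$, which is handled by fixing $\ln 1=0$ at $z=a$ and continuing analytically along paths within the same half-plane, thereby avoiding the real eigenvalues of $H_{(0,R),\alpha,\beta}$ altogether.
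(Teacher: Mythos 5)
Your proposal is correct, and for \eqref{30} and the $n=1$ case of \eqref{31} it coincides with the paper's argument: both deduce \eqref{30} directly from the determinant convergence \eqref{4.41} and then divide by $z-a$ using the identity \eqref{5-1}. For $n\geq 2$, however, you take a genuinely different route. The paper stays on the operator side: it invokes the trace identities \eqref{5-4}--\eqref{5-5} for powers of resolvents, proves the $\cB_1$-convergence \eqref{5-10} of the resolvent differences by combining the $\cB_2$-convergences of Lemma \ref{l4.4} with Gr\"umm's theorem \cite{Gr73}, and then passes from the case $a=z$ to general $a\neq z$ via the partial-fraction recursion \eqref{5-16}--\eqref{5-17} and induction on $n$. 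You instead treat $\Psi_R(z)=\int_{\R}\xi_R(\lambda)\,d\lambda/[(\lambda-a)(\lambda-z)]$ as an analytic family and upgrade pointwise convergence to convergence of all $z$-derivatives by Vitali/Weierstrass, which yields every $n$ simultaneously and dispenses with both Gr\"umm's theorem and the induction. The price is the a priori bound $\sup_{R}\|\xi_R\|_{L^1(\R;(1+\lambda^2)^{-1}d\lambda)}<\infty$, which you correctly single out as the main obstacle; it is in fact available, e.g.\ from the invariance principle combined with the uniform estimate \eqref{4.66} and the uniform lower bounds \eqref{4.67}, or as a byproduct of the $\cB_1$-convergence \eqref{5-10} that the paper proves anyway. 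So your route is sound and arguably leaner, at the cost of importing one quantitative input that the paper's scheme produces as an output.

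One point needs repair: your branch-of-logarithm bookkeeping (``continuing analytically along paths within the same half-plane'') does not cover the case $\Im(a)\,\Im(z)<0$, which the lemma permits and which is precisely the configuration used afterwards (Theorem \ref{t5.3} takes $a=\mp i$, $z=\pm i$). Since $H_{(0,R),\alpha,\beta}$ and $H_{(0,R),\alpha,\beta}^{(0)}$ are bounded from below uniformly in $R$ by \eqref{4.67} and \eqref{3.108}, all the determinants are analytic and nonvanishing on a common half-line $(-\infty,c)$, so the path from $a$ to $z$ may cross the real axis there; with that modification the identity $\ln(D_R(z)/D_R(a))=(z-a)\Psi_R(z)$ holds for all $a,z\in\C\backslash\R$, and your Vitali argument, seeded by pointwise convergence in each half-plane, goes through.
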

\begin{proof} 
Convergence in \eqref{30} is a direct consequence of \eqref{4.41}.  Moreover, since
\begin{align}
&\ln \left(\frac{{\det}_{L^2((0,R);dx)} \Big(I_{(0,R)} 
+ \ol{u_R \big(H_{(0,R),\alpha,\beta} - z I_{(0,R)}\big)^{-1} v_R}\Big)}{{\det}_{L^2((0,R);dx)} \Big(I_{(0,R)} 
+ \ol{u_R \big(H_{(0,R),\alpha,\beta} - a I_{(0,R)}\big)^{-1} v_R}\Big)} \right)   \no \\
& \quad =(z-a)\int_{\R}\frac{\xi\big(\lambda; H_{(0,R),\alpha,\beta}, 
H_{(0,R),\alpha,\beta}^{(0)}\big) d\lambda}{(\lambda-a)(\lambda-z)},     \lb{5-1}\\
& \ln\left(\frac{{\det}_{L^2((0,\infty);dx)} \Big(I_{(0,\infty)} 
+ \ol{u \big(H_{(0,\infty),\alpha} - z I_{(0,\infty)}\big)^{-1} v}\Big)}{{\det}_{L^2((0,\infty);dx)} \Big(I_{(0,\infty)}  
+ \ol{u \big(H_{(0,\infty),\alpha} - a I_{(0,\infty)}\big)^{-1} v}\Big)} \right)   \no \\
& \quad =(z-a)\int_{\R}\frac{\xi\big(\lambda; H_{(0,\infty),\alpha}, 
H_{(0,\infty),\alpha}^{(0)}\big) d\lambda}{(\lambda-a)(\lambda-z)}       \lb{5-2}
\end{align}
(cf.\ \eqref{5.0} and its half-line analog), one immediately gets
\begin{align}
\begin{split}
\lim_{R\rightarrow \infty}\int_{\R}\frac{\xi\big(\lambda; H_{(0,R),\alpha}, 
H_{(0,R),\alpha}^{(0)}\big) d\lambda}{(\lambda-a)(\lambda-z)} = 
\int_{\R}\frac{\xi\big(\lambda; H_{(0,\infty),\alpha}, 
H_{(0,\infty),\alpha}^{(0)}\big) d\lambda}{(\lambda-a)(\lambda-z)},    \\
z\neq a, \quad z,a\in \C\backslash \R.&     \lb{5-3}
\end{split}
\end{align}

To verify \eqref{31}, we start with the basic identities (see, e.g., \cite[Ch.\ 8]{Ya92})
\begin{align}
& {\tr}_{L^2((0,R);dx)}\big (\big(H_{(0,R),\alpha,\beta}^{(0)}-z I_{(0,R)}\big)^{-n} 
- (H_{(0,R),\alpha,\beta}-z I_{(0,R)})^{-n}\big)   \no \\
& \quad =n\int_{\R}\frac{\xi\big(\lambda; H_{(0,R),\alpha,\beta}, 
H_{(0,R),\alpha,\beta}^{(0)}\big) d\lambda}{(\lambda-z)^{n+1}},   \lb{5-4} \\
& {\tr}_{L^2((0,\infty);dx)}\big(\big(H_{(0,\infty),\alpha}^{(0)} -z I_{(0,\infty)}\big)^{-n}
-(H_{(0,\infty),\alpha} -z I_{(0,\infty)})^{-n}\big)    \no \\
& \quad =n\int_{\R}\frac{\xi\big(\lambda; H_{(0,\infty),\alpha}, 
H_{(0,\infty),\alpha}^{(0)}\big) d\lambda}{(\lambda-z)^{n+1}},       \lb{5-5} \\
&\hspace*{3.2cm} n\in \N, \; z\in \C\backslash \R.     \no 
\end{align}
Next we claim that 
\begin{align}
&\lim_{R\rightarrow \infty} {\tr}_{L^2((0,R);dx)}\big(\big(H_{(0,R),\alpha,\beta}^{(0)}
-z I_{(0,R)}\big)^{-n}
-(H_{(0,R),\alpha,\beta}-z I_{(0,R)})^{-n} \big)    \no \\
& \quad = {\tr}_{L^2((0,\infty);dx)}\big((H_{(0,\infty),\alpha}^{(0)} -z I_{(0,\infty)})^{-n}
-(H_{(0,\infty),\alpha} -z I_{(0,\infty)})^{-n}\big),    \lb{5-6} \\
& \hspace*{8.1cm}  n\in \N, \; z\in \C\backslash \R.    \no 
\end{align}
To see this, one notes that
\begin{align}
& {\tr}_{L^2((0,R);dx)}\big(\big(H_{(0,R),\alpha,\beta}^{(0)}-z I_{(0,R)}\big)^{-n}
- (H_{(0,R),\alpha,\beta}-z I_{(0,R)})^{-n} \big)   \no \\
& \quad = {\tr}_{L^2((0,\infty);dx)}\big(\big(\big(H_{(0,R),\alpha,\beta}^{(0)}
- z I_{(0,R)}\big)^{-n}-(H_{(0,R),\alpha,\beta}-z I_{(0,R)})^{-n}\big)\oplus 0 \big)\no\\
 &\quad = {\tr}_{L^2((0,\infty);dx)}\bigg[\bigg(\big(H_{(0,R),\alpha,\beta}^{(0)}
 - z I_{(0,R)}\big)^{-1}\oplus \frac{-1}{z} I_{(R,\infty)}\bigg)^n   \\
 & \qquad  -  \bigg((H_{(0,R),\alpha,\beta}-z I_{(0,R)})^{-1}
 \oplus \frac{-1}{z} I_{(R,\infty)}\bigg)^n  \bigg],   \quad 
n\in \N, \; z\in \C\backslash \R.    \lb{5-7}
\end{align}
Since the trace functional is continuous with respect to the 
$\cB_1\big(L^2((0,\infty);dx)\big)$-norm, to verify \eqref{5-6}, it suffices to prove that
\begin{align}
& \sum_{k=1}^n \bigg(\big(H_{(0,R),\alpha,\beta}^{(0)}-z I_{(0,R)}\big)^{-1} 
\oplus\frac{-1}{z} I_{(R,\infty)}\bigg)^{n-k} 
\bigg[\big(H_{(0,R),\alpha,\beta}^{(0)}-z I_{(0,R)}\big)^{-1}    \no \\
& \quad - (H_{(0,R),\alpha,\beta}-z I_{(0,R)})^{-1}\oplus 0 \bigg]    
\bigg((H_{(0,R),\alpha,\beta} -z I_{(0,R)})^{-1}
\oplus\frac{-1}{z} I_{(R,\infty)}\bigg)^{k-1}  \lb{5-8}
\end{align}
converges to 
\begin{align}
& \sum_{k=1}^n \big(H_{(0,\infty),\alpha}^{(0)} -z I_{(0,\infty)}\big)^{k-n}
\big[\big(H_{(0,\infty),\alpha}^{(0)} -z I_{(0,\infty)}\big)^{-1}
- (H_{(0,\infty),\alpha} -z I_{(0,\infty)})^{-1} \big]    \no \\
& \qquad \times (H_{(0,\infty),\alpha} -z I_{(0,\infty)})^{1-k}   \lb{5-9}
\end{align}
in $\cB_1\big(L^2((0,\infty);dx)\big)$ as $R\rightarrow \infty$, since \eqref{5-8} 
is the operator under the trace on the right-hand side  of \eqref{5-7} and \eqref{5-9} is the operator under the trace on the right-hand side  of \eqref{5-6}.\footnote{Here we have made use of the identity $A^n-B^n=\sum_{k=1}^nA^{n-k}(A-B)B^{k-1}$ for $A, B \in \cB(\cH)$.}

By \eqref{4.38} , one concludes that 
\begin{align}
\slim_{R\rightarrow \infty} \bigg(\big(H_{(0,R),\alpha,\beta}^{(0)}-z I_{(0,R)}\big)^{-1}
\oplus\frac{-1}{z} I_{(R,\infty)}\bigg)^{n-k}
&=\big(H_{(0,\infty),\alpha}^{(0)} -z I_{(0,\infty)}\big)^{k-n},    \\
\slim_{R\rightarrow \infty} \bigg((H_{(0,R),\alpha,\beta} -z I_{(0,R)})^{-1}
\oplus\frac{-1}{z} I_{(R,\infty)}\bigg)^{k-1}&=(H_{(0,\infty),\alpha}-z I_{(0,\infty)})^{1-k}.
\end{align}
Thus, convergence of \eqref{5-8} to \eqref{5-9}, will follow from Gr\"{u}mm's Theorem \cite{Gr73} (see also the discussion in \cite[Ch.\ 2]{Si05}) if we can show that 
\begin{align}
\begin{split}
& \lim_{R\rightarrow \infty}\big[\big(\big(H_{(0,R),\alpha,\beta}^{(0)}-z I_{(0,R)}\big)^{-1}
- (H_{(0,R),\alpha,\beta}-z I_{(0,R)})^{-1}\big)\oplus 0\big]    \lb{5-10} \\
& \quad =\big(H_{(0,\infty),\alpha}^{(0)} -z I_{(0,\infty)}\big)^{-1}
- (H_{(0,\infty),\alpha} -z I_{(0,\infty)})^{-1}
\, \text{ in $\cB_1\big(L^2((0,\infty);dx)\big)$.}
\end{split}
\end{align}
 The $\cB_1\big(L^2((0,\infty);dx)\big)$-convergence in \eqref{5-10} follows readily from the identities 
\begin{align}
&\big((H_{(0,R),\alpha,\beta}-z I_{(0,R)})^{-1}
- \big(H_{(0,R),\alpha,\beta}^{(0)}-z I_{(0,R)}\big)^{-1}\big)\oplus 0     \no \\
& \quad =\big(\overline{\big(H_{(0,R),\alpha,\beta}^{(0)}-z I_{(0,R)}\big)^{-1}v_R}
\oplus0\big)    \no\\
&\qquad \times \bigg(\Big[I_{(0,\infty)}
+\big(\overline{u_R\big(H_{(0,R),\alpha,\beta}^{(0)}-z I_{(0,R)}\big)^{-1}v_R}\oplus 0\big) \Big]^{-1}-\big(0 \oplus I_{(R,\infty)} \big)\bigg)    \no\\
&\qquad \times 
\big( u_R\big(H_{(0,R),\alpha,\beta}^{(0)}-z I_{(0,R)}\big)^{-1}\oplus 0\big),  \lb{5-11}\\
&\big(H_{(0,\infty),\alpha}^{(0)} -z I_{(0,\infty)}\big)^{-1}
- (H_{(0,\infty),\alpha} -z I_{(0,\infty)})^{-1}   \no\\
&\quad = \overline{\big(H_{(0,\infty),\alpha}^{(0)} -z I_{(0,\infty)}\big)^{-1}v}
\Big[I_{(0,\infty)}+\overline{u\big(H_{(0,\infty),\alpha}^{(0)} -z I_{(0,\infty)}\big)^{-1}v} 
\Big]^{-1}   \no \\
& \qquad \times u\big(H_{(0,\infty),\alpha}^{(0)} -z I_{(0,\infty)}\big)^{-1}.   \lb{5-12}
\end{align}
Applying \eqref{64} and \eqref{64d} yields the strong convergence
\begin{align}
\begin{split} \lb{5-13}
& \slim_{R\rightarrow \infty}\bigg(\big[I_{(0,\infty)}
+\big(\overline{u_R(H_{(0,R),\alpha,\beta}^{(0)}-z I_{(0,R)})^{-1}v_R}
\oplus 0\big) \big]^{-1}-\big(0 \oplus I_{(R,\infty)} \big)\bigg)   \\
& \quad = \bigg[I_{(0,\infty)}+\overline{u(H_{(0,\infty),\alpha}^{(0)} -z I_{(0,\infty)})^{-1}v}\bigg]^{-1}.
\end{split}
\end{align}
Therefore, \eqref{4.39} and \eqref{5-13} together with Gr\"{u}mm's Theorem \cite{Gr73} yield
\begin{align}
&\lim_{R\rightarrow \infty}\bigg(\Big[I_{(0,\infty)}+\big(\overline{u_R\big(H_{(0,R),\alpha,\beta}^{(0)}-z I_{(0,R)}\big)^{-1}v_R}\oplus 0\big) \Big]^{-1}-\big(0 \oplus I_{(R,\infty)} \big)\bigg)    \no \\
& \qquad \times \big(u_R\big(H_{(0,R),\alpha,\beta}^{(0)}-z I_{(0,R)}\big)^{-1}\oplus 0\big)  \no\\
&\quad =\Big[I_{(0,\infty)}
+\overline{v\big(H_{(0,\infty),\alpha}^{(0)} -z I_{(0,\infty)}\big)^{-1}v}\Big]^{-1}u\big(H_{(0,\infty),\alpha}^{(0)} -z I_{(0,\infty)}\big)^{-1}\lb{5-14}
\end{align}
in $\cB_2\big(L^2((0,\infty);dx)\big)$.  The convergence in \eqref{5-14} and \eqref{4.39a} yields convergence of the right-hand side  of \eqref{5-11} to 
\eqref{5-12} in $\cB_1\big(L^2((0,\infty);dx)\big)$, implying \eqref{5-10}.

Employing \eqref{5-4}, \eqref{5-5}, and \eqref{5-10}, we have shown that
\begin{equation}\lb{5-15}
\lim_{R\rightarrow \infty}\int_{\R}\frac{\xi\big(\lambda; H_{(0,R),\alpha,\beta}, 
H_{(0,R),\alpha,\beta}^{(0)}\big) d\lambda}{(\lambda-z)^{n+1}} 
= \int_{\R}\frac{\xi\big(\lambda; H_{(0,\infty),\alpha}, 
H_{(0,\infty),\alpha}^{(0)}\big) d\lambda}{(\lambda-z)^{n+1}}, \quad n\in \N,
\end{equation}
so that \eqref{31} holds in the special case $a=z$.  Thus, it remains to settle the case $z\neq a$.

For $z\neq a$ one notes that 
\begin{align}
\int_{\R}\frac{\xi\big(\lambda; H_{(0,R),\alpha,\beta}, 
H_{(0,R),\alpha,\beta}^{(0)}\big) d\lambda}{(\lambda-a)(\lambda-z)^{n+1}} 
&=\frac{1}{z-a}\Bigg[\int_{\R}\frac{\xi\big(\lambda; H_{(0,R),\alpha,\beta}, 
H_{(0,R),\alpha,\beta}^{(0)}\big) d\lambda}{(\lambda - z)^{n+1}}    \no \\
& \hspace*{.7cm} -\int_{\R}\frac{\xi\big(\lambda; H_{(0,R),\alpha,\beta}, 
H_{(0,R),\alpha,\beta}^{(0)}\big) d\lambda}{(\lambda-a)(\lambda-z)^n}\Bigg],  
\lb{5-16}\\
\int_{\R}\frac{\xi\big(\lambda; H_{(0,\infty),\alpha}, 
H_{(0,\infty),\alpha}^{(0)}\big)d\lambda}{(\lambda-a)(\lambda-z)^{n+1}} 
&=\frac{1}{z-a}\Bigg[\int_{\R}\frac{\xi\big(\lambda; H_{(0,\infty),\alpha}, 
H_{(0,\infty),\alpha}^{(0)}\big) d\lambda}{(\lambda - z)^{n+1}}     \no \\
& \hspace*{.7cm} -\int_{\R}\frac{\xi\big(\lambda; H_{(0,\infty),\alpha}, 
H_{(0,\infty),\alpha}^{(0)}\big) d\lambda}{(\lambda-a)(\lambda-z)^n}\Bigg]. 
\lb{5-17}
\end{align}
Convergence in \eqref{31} now follows from \eqref{5-15} and the two 
identities \eqref{5-16} and \eqref{5-17} via a simple induction on $n\in\bbN$.  
We emphasize that \eqref{5-3} yields the crucial first induction step, $n=1$, since \eqref{5-3} implies, via \eqref{5-16} and \eqref{5-17}, that
\begin{equation}
\lim_{R\rightarrow \infty}\int_{\R}\frac{\xi\big(\lambda; H_{(0,R),\alpha,\beta}, 
H_{(0,R),\alpha,\beta}^{(0)}\big) d\lambda}{(\lambda-a)(\lambda-z)} 
=\int_{\R}\frac{\xi\big(\lambda; H_{(0,\infty),\alpha}, 
H_{(0,\infty),\alpha}^{(0)}\big) d\lambda}{(\lambda-a)(\lambda-z)}  
\end{equation}
for $z\neq a$.
\end{proof}

In the following we denote by $C_{\infty}(\R)$ the space of continuous functions on $\bbR$ vanishing at infinity.

\begin{lemma} \lb{l5.2}
Let $f, f_n\in L^1(\bbR; d\lambda)$ and suppose that for some fixed $M>0$, 
$\|f_n\|_{L^1(\bbR;d\lambda)}\leq M$, $n\in\bbN$.  If
\begin{equation} \lb{A2}
\lim_{n\rightarrow \infty} \int_{\bbR} f_n(\lambda) d\lambda \, 
P((\lambda+i)^{-1},(\lambda-i)^{-1}) = \int_{\bbR}  
f(\lambda) d\lambda \, P((\lambda+i)^{-1},(\lambda-i)^{-1})
\end{equation}
for all polynomials $P(\cdot,\cdot)$ in two variables, then
\begin{equation}
\lim_{n\rightarrow \infty}\int_{\bbR} 
f_n(\lambda) d\lambda \, g(\lambda)=\int_{\bbR}  
f(\lambda) d\lambda \,g(\lambda), \quad g\in C_{\infty}(\bbR). 
\end{equation}
\end{lemma}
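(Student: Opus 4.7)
The plan is to invoke the Stone--Weierstrass theorem on the one-point compactification $\widehat{\bbR} = \bbR \cup \{\infty\}$. The two rational functions $\lambda \mapsto (\lambda \pm i)^{-1}$ extend continuously to $\widehat\bbR$ by sending $\infty \mapsto 0$. Let $\cA \subset C(\widehat\bbR)$ denote the unital algebra they generate, whose elements are precisely the functions $\lambda \mapsto P((\lambda+i)^{-1},(\lambda-i)^{-1})$ for $P$ an arbitrary polynomial in two variables. I would verify the three hypotheses of Stone--Weierstrass: $\cA$ is self-adjoint because $\overline{(\lambda+i)^{-1}} = (\lambda-i)^{-1}$ on $\bbR$; $\cA$ contains the constants by construction; and $\cA$ separates the points of $\widehat\bbR$, since $(\lambda+i)^{-1}$ already takes distinct values at any two distinct finite points of $\bbR$ and also distinguishes each finite point from $\infty$. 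Consequently, $\cA$ is uniformly dense in $C(\widehat\bbR)$.

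Given $g \in C_\infty(\bbR)$, I extend $g$ to $\widehat\bbR$ by declaring $g(\infty) := 0$, obtaining an element of $C(\widehat\bbR)$. For each $\epsilon > 0$, density provides a polynomial $P_\epsilon$ in two variables such that
\begin{equation*}
Q_\epsilon(\lambda) := P_\epsilon\bigl((\lambda+i)^{-1},(\lambda-i)^{-1}\bigr), \qquad
\sup_{\lambda \in \bbR} |g(\lambda) - Q_\epsilon(\lambda)| < \epsilon.
\end{equation*}
In particular $Q_\epsilon$ is bounded on $\bbR$, hence integrable against any $L^1$-function. A standard three-$\epsilon$ decomposition, using both the uniform bound $\|f_n\|_{L^1(\bbR;d\lambda)} \leq M$ and $f \in L^1(\bbR;d\lambda)$, gives
\begin{align*}
\biggl|\int_\bbR f_n(\lambda) g(\lambda)\, d\lambda - \int_\bbR f(\lambda) g(\lambda)\, d\lambda\biggr|
&\leq \bigl(M + \|f\|_{L^1(\bbR;d\lambda)}\bigr)\, \epsilon \\
&\quad + \biggl|\int_\bbR \bigl(f_n(\lambda)-f(\lambda)\bigr) Q_\epsilon(\lambda)\, d\lambda\biggr|.
\end{align*}
Hypothesis \eqref{A2}, applied to the particular polynomial $P_\epsilon$, forces the remaining integral to tend to zero as $n \to \infty$. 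Letting $\epsilon \downarrow 0$ then concludes the argument.

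No serious obstacle is anticipated. The only mild subtlety is the need to work on the compact space $\widehat\bbR$ rather than on $\bbR$ in order to legitimately invoke Stone--Weierstrass; equivalently, one must allow polynomials $P_\epsilon$ with nonzero constant term (so that the approximants $Q_\epsilon$ need not themselves lie in $C_\infty(\bbR)$). This causes no difficulty since the hypothesis \eqref{A2} is stated for \emph{all} polynomials in two variables. The uniform $L^1$-bound $M$ on the sequence $\{f_n\}_{n \in \bbN}$ is essential and is used precisely once, to control the approximation error of $g$ by $Q_\epsilon$ simultaneously in $n$.
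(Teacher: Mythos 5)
Your proposal is correct and follows essentially the same route as the paper: Stone--Weierstrass density of the functions $P((\lambda+i)^{-1},(\lambda-i)^{-1})$ in the uniform norm, followed by the standard three-$\varepsilon$ estimate using the uniform bound $\|f_n\|_{L^1(\bbR;d\lambda)}\leq M$ and $f\in L^1(\bbR;d\lambda)$. Your remark about passing to the one-point compactification (so that approximants may carry a nonzero constant term) is a harmless, slightly more careful packaging of the same density argument the paper invokes.
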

\begin{proof}
Let $\varepsilon>0$ and $g\in C_{\infty}(\bbR)$.  Since by a 
Stone--Weierstrass argument, polynomials in 
$(\lambda \pm i)^{-1}$ are dense in $C_{\infty}(\bbR)$, there is a polynomial $P(\cdot,\cdot)$ in two variables such that writing 
\begin{equation}
\cP(\lambda)=P((\lambda+i)^{-1},(\lambda-i)^{-1}), 
\quad \lambda \in\bbR,
\end{equation}
one concludes that  
\begin{equation}
\|g-\cP\|_{L^\infty(\bbR; d\lambda)} \leq 
\frac{\varepsilon}{2[M+\|f\|_{L^1(\bbR;d\lambda)}]}. 
\end{equation}
By \eqref{A2}, there exists an $N(\varepsilon) \in \bbN$ such that
\begin{equation}
\bigg|\int_{\bbR} f_n(\lambda) d\lambda \, \cP(\lambda) -\int_{\bbR} 
f(\lambda) d\lambda \, \cP(\lambda) \bigg| \leq \frac{\varepsilon}{2}
\, \text{ for all $n \geq N(\varepsilon)$.} 
\end{equation}
Therefore, if $n \geq N(\varepsilon)$, 
\begin{align}
& \bigg| \int_{\bbR} f_n(\lambda)d\lambda \, g(\lambda)  
- \int_{\bbR} f(\lambda) d\lambda \, g(\lambda) \bigg| 
\leq \big[\|f_n\|_{L^1(\bbR;d\lambda)}+\|f\|_{L^1(\bbR;d\lambda)}\big] 
\|g-\cP\|_{L^\infty(\bbR; d\lambda)}   \no \\
& \quad + \bigg|\int_{\bbR} f_n(\lambda) d\lambda \, \cP(\lambda) 
-\int_{\bbR} f(\lambda) d\lambda \, \cP(\lambda)\bigg| 
\leq \varepsilon.  
\end{align}
\end{proof}

Next, we continue with some preparations needed to prove the principal results of this section. We start by recalling some basic notions regarding the convergence of positive measures (essentially following Bauer 
\cite[\&\ 30]{Ba01}). Denoting by $\mathscr{M}_+(E)$ the set of all {\it positive Radon measures} on a locally compact space $E$, and by 
\begin{equation}
\mathscr{M}_+^b(E)=\{\mu \in \mathscr{M}_+(E)\,|\, \mu(E)<+\infty\}, 
\end{equation}
the set of all \textit{finite} positive Radon measures on $E$, we note that in the special case $E=\R^n$, $n\in\bbN$, $\mathscr{M}_+^b(\R^n)$ represents the set of all finite positive Borel measures on $\R^n$.  

If $\mu$ is a Radon measure, a point $x\in E$ is called an \textit{atom} of $\mu$ if $\mu(\{x\})>0$.  

In the following, $C_0(E)$ denotes the continuous functions on $E$ with compact support, and $C_b(E)$ represents the bounded continuous functions on $E$.

\begin{definition} \lb{d5.3}
Let $E$ be a locally compact space. \\
$(i)$ A sequence $\{\mu_n\}_{n\in\bbN} \subset \mathscr{M}_+(E)$ is said to be \textit{vaguely convergent} to a Radon measure $\mu \in \mathscr{M}_+(E)$ if
\begin{equation}
\lim_{n\to \infty}\int_E d\mu_n\, g = \int_E d\mu \, g, \quad g \in C_0(E). 
\end{equation}
$(ii)$ A sequence $\{\mu_n\}_{n\in\bbN} \subset \mathscr{M}_+^b(E)$ is said to be \textit{weakly convergent} to $\mu \in \mathscr{M}_+^b(E)$ if 
\begin{equation}
\lim_{n\to \infty}\int_E d\mu_n \, f=\int_E d\mu \, f, \quad f\in C_b(E). 
\end{equation}
$(iii)$ A Borel set $B \subset E$ is called \textit{boundaryless} with respect to the measure $\mu \in \mathscr{M}_+^b(E)$  
$($in short, $\mu$-\textit{boundaryless}$)$, if the boundary $\partial B$ of $B$ has $\mu$-measure equal to zero, $\mu(\partial B) = 0$.
\end{definition} 

\begin{theorem}[\cite{HLMW01}, Proposition 4.3] \lb{t5.4}
Let $\mu, \mu_n$, $n\in \N$, be positive $($not necessarily finite$)$ Borel measures on $\R$.  Then the following two items are equivalent: \\
$(i)$ The finiteness $\mu((-\infty,\lambda))<\infty$ holds for all $\lambda\in \R$ and the relation 
\begin{equation}\lb{176}
\lim_{n\rightarrow \infty}\mu_n((-\infty,\lambda))=\mu((-\infty,\lambda))
\end{equation}
holds for all $\lambda\in \R$ except for at most countably many $\lambda \in \bbR$ with $\mu(\{\lambda\})\neq 0$. \\
$(ii)$ The sequence $\{\mu_n\}_{n\in\bbN}$ converges vaguely to $\mu$ as 
$n\to \infty$ and the relation
\begin{equation}\lb{177}
\lim_{\lambda\downarrow -\infty}\Big(\limsup_{n\to \infty}
\big(\mu_n((-\infty,\lambda))\big) \Big)= 0
\end{equation}
holds. 
\end{theorem}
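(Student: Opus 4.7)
The plan is to prove both implications via Portmanteau-type arguments for vague convergence, working with the distribution functions $F_n(\lambda) := \mu_n((-\infty, \lambda))$ and $F(\lambda) := \mu((-\infty, \lambda))$, and exploiting \eqref{177} as a substitute for uniform tightness at $-\infty$.

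For $(i) \Rightarrow (ii)$: Since $F$ is nondecreasing and left-continuous, its set of discontinuity points coincides with the atoms of $\mu$ and is at most countable. By hypothesis, $F_n(\lambda) \to F(\lambda)$ at every continuity point of $F$. To prove vague convergence, I would fix $g \in C_0(\R)$ with $\supp(g) \subset [a,b]$, where $a, b$ are chosen to be continuity points of $F$. Partitioning $[a,b]$ by continuity points $a = a_0 < a_1 < \cdots < a_N = b$, uniformly approximate $g$ by step functions on this partition; the relation $\mu_n([a_{j-1}, a_j)) = F_n(a_j) - F_n(a_{j-1}) \to F(a_j) - F(a_{j-1}) = \mu([a_{j-1}, a_j))$, combined with a standard $\varepsilon/3$-argument, yields $\int g \, d\mu_n \to \int g \, d\mu$. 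For \eqref{177}, I would choose $\lambda_k \downarrow -\infty$ through continuity points of $F$; then $\limsup_n F_n(\lambda_k) = F(\lambda_k)$, and continuity of $\mu$ from above (using $F(\lambda_0) < \infty$ for some fixed $\lambda_0$) gives $F(\lambda_k) \downarrow 0$.

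For $(ii) \Rightarrow (i)$: First I would establish finiteness $\mu((-\infty, \lambda)) < \infty$ for all $\lambda \in \R$. Using \eqref{177}, pick $M > 0$ so large that $\limsup_n \mu_n((-\infty, -M)) \leq 1$. The standard Portmanteau inequality for open sets under vague convergence of Radon measures yields $\mu((-\infty, -M)) \leq \liminf_n \mu_n((-\infty, -M)) \leq 1$, while $\mu([-M, \lambda]) < \infty$ because $\mu$ is Radon and $[-M,\lambda]$ compact. Hence $\mu((-\infty, \lambda)) \leq 1 + \mu([-M, \lambda]) < \infty$. For \eqref{176} at points $\lambda$ with $\mu(\{\lambda\}) = 0$: given $\varepsilon > 0$, pick $M$ (also a non-atom of $\mu$, possible since atoms are countable) so that both $\limsup_n \mu_n((-\infty, -M)) \leq \varepsilon/3$ and $\mu((-\infty, -M)) \leq \varepsilon/3$. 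The bounded Borel set $B = [-M, \lambda)$ has boundary contained in the set of non-atoms $\{-M, \lambda\}$, hence $\mu(\partial B) = 0$; a sandwich argument using $\mu(\operatorname{int}(B)) \leq \liminf_n \mu_n(B) \leq \limsup_n \mu_n(B) \leq \mu(\ol{B})$ (via the open/compact Portmanteau inequalities) delivers $\mu_n([-M, \lambda)) \to \mu([-M, \lambda))$. Combining with the tail bounds on $(-\infty,-M)$ yields $|\mu_n((-\infty,\lambda)) - \mu((-\infty,\lambda))| < \varepsilon$ for all sufficiently large $n$.

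The principal technical obstacle lies in the direction $(ii) \Rightarrow (i)$: the half-line $(-\infty, \lambda)$ is neither open nor compact, so the direct open/compact Portmanteau inequalities are not immediately applicable. The remedy is precisely to exploit \eqref{177} as a substitute for uniform tightness at $-\infty$, cutting off at a non-atom $-M$ of $\mu$ to reduce the problem to a bounded $\mu$-boundaryless interval on which the standard Portmanteau sandwich argument succeeds. The exceptional at-most-countable set of atoms in \eqref{176} is intrinsic, since failure of the boundaryless property at an atom $\lambda$ would spoil the convergence of $\mu_n([-M, \lambda))$.
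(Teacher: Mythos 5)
The paper does not prove Theorem \ref{t5.4}; it is imported verbatim from \cite{HLMW01}, Proposition 4.3, so there is no in-paper argument to compare yours against. Judged on its own, your proof is correct and follows the natural route: both directions are standard portmanteau-type arguments, with \eqref{177} serving exactly the role you describe, namely as a tightness substitute at $-\infty$ that lets you truncate the half-line $(-\infty,\lambda)$ (which is neither open nor compact) to a bounded $\mu$-boundaryless interval $[-M,\lambda)$ with $-M$ a non-atom. Two small points you leave implicit but should be aware of: in $(i)\Rightarrow(ii)$ the $\varepsilon/3$-argument needs $\sup_n \mu_n([a,b])<\infty$, which follows from $\mu_n((-\infty,c))\to\mu((-\infty,c))<\infty$ at a continuity point $c>b$ together with monotonicity; and the open-set and compact-set portmanteau inequalities you invoke require inner and outer regularity of $\mu$, which hold because the hypotheses force $\mu$ (and, for large $n$, the $\mu_n$) to be locally finite, hence Radon, on $\R$. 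With those remarks supplied, the argument is complete.
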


\begin{theorem}[\cite{Ba01}, Theorem 30.8] \lb{Bauer30.8}
Suppose that the sequence $\{\mu_n\}_{n\in\bbN} \subset \mathscr{M}_+^b(E)$ converges vaguely to the measure $\mu \in \mathscr{M}_+^b(E)$.  Then the following statements are equivalent: \\
$(i)$ The sequence $\mu_n$ converges weakly to $\mu$ as $n\to\infty$.  \\
$(ii)$ $\lim_{n\rightarrow \infty} \mu_n (E) = \mu (E)$.   \\
$(iii)$ For every $\varepsilon>0$ there exists a compact set $K_{\varepsilon}$  of $E$ such that 
\begin{equation}
\mu_n(E \backslash K_{\varepsilon}) \leq \varepsilon, \quad n\in \N.
\end{equation}
\end{theorem}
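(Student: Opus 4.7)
My plan is to prove the equivalences by the cycle $(i)\Rightarrow(ii)\Rightarrow(iii)\Rightarrow(i)$, using the hypothesis of vague convergence throughout to bridge vague and weak notions.

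The implication $(i)\Rightarrow(ii)$ is immediate: the constant function $\mathbf{1}$ lies in $C_b(E)$, so weak convergence applied to it gives $\mu_n(E)=\int_E d\mu_n\,\mathbf{1}\to\int_E d\mu\,\mathbf{1}=\mu(E)$. For $(iii)\Rightarrow(i)$ I would let $f\in C_b(E)$ with $M:=\|f\|_{L^\infty}$, fix $\varepsilon>0$, and combine tightness with inner regularity of $\mu$ to find a compact $K\subset E$ with $\mu_n(E\setminus K)\le\varepsilon/(4M)$ for all $n$ and $\mu(E\setminus K)\le\varepsilon/(4M)$. By Urysohn's lemma there exists $\chi\in C_0(E)$ with $0\le\chi\le 1$ and $\chi\equiv 1$ on $K$. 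Then $f\chi\in C_0(E)$, so vague convergence handles the "interior" contribution, while the tails are controlled by the estimate
\begin{equation}
\Big|\int_E f\,(1-\chi)\,d\nu\Big|\le M\,\nu(E\setminus K),
\end{equation}
applied to $\nu=\mu_n$ and $\nu=\mu$. A standard three-term splitting then yields $\limsup_{n\to\infty}|\int f\,d\mu_n-\int f\,d\mu|\le\varepsilon/2<\varepsilon$, giving weak convergence.

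The main work is $(ii)\Rightarrow(iii)$. I would first use inner regularity of $\mu$ to pick, for given $\varepsilon>0$, a compact $K_0\subset E$ with $\mu(E\setminus K_0)<\varepsilon/3$, and then (using local compactness) a slightly larger compact $K'$ with $K_0$ in its interior, together with $\chi\in C_0(E)$ satisfying $0\le\chi\le 1$, $\chi\equiv 1$ on $K_0$, $\mathrm{supp}(\chi)\subset K'$. From $\int_E\chi\,d\mu_n\le\mu_n(K')$ one gets
\begin{equation}
\mu_n(E\setminus K')\le\mu_n(E)-\int_E\chi\,d\mu_n.
\end{equation}
Passing to the limit and using $(ii)$ together with the vague convergence hypothesis yields $\limsup_{n\to\infty}\mu_n(E\setminus K')\le\mu(E)-\int_E\chi\,d\mu=\int_E(1-\chi)\,d\mu\le\mu(E\setminus K_0)<\varepsilon/3$. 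Hence there is $N\in\bbN$ such that $\mu_n(E\setminus K')<\varepsilon/2$ for $n\ge N$.

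The only subtlety is that this uniform bound is only valid for large $n$; for the finitely many indices $n<N$ I would invoke inner regularity of each individual $\mu_n\in\mathscr{M}_+^b(E)$ to choose compact sets $L_1,\dots,L_{N-1}$ with $\mu_n(E\setminus L_n)\le\varepsilon$, and then take $K_\varepsilon:=K'\cup L_1\cup\cdots\cup L_{N-1}$, which is compact and satisfies $\mu_n(E\setminus K_\varepsilon)\le\varepsilon$ for every $n\in\bbN$. The hardest step is thus $(ii)\Rightarrow(iii)$, where the delicate point is controlling the tails uniformly in $n$ while having only vague convergence plus total-mass convergence to work with; the trick is that these two conditions together force the cutoff-error $\int(1-\chi)\,d\mu_n=\mu_n(E)-\int\chi\,d\mu_n$ to converge to a quantity that is itself small by the initial choice of $K_0$.
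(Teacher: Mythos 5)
The paper does not prove this statement at all: it is imported verbatim as Theorem~30.8 of Bauer's textbook \cite{Ba01}, so there is no in-paper argument to compare against. Your cyclic proof $(i)\Rightarrow(ii)\Rightarrow(iii)\Rightarrow(i)$ is correct and is essentially the standard argument (and the one in Bauer): testing weak convergence against $\mathbf{1}\in C_b(E)$, the Urysohn cutoff plus three-term splitting for $(iii)\Rightarrow(i)$, and for $(ii)\Rightarrow(iii)$ the observation that $\mu_n(E\setminus K')\le\mu_n(E)-\int_E\chi\,d\mu_n$ converges to $\int_E(1-\chi)\,d\mu\le\mu(E\setminus K_0)$, with the finitely many small indices handled by inner regularity of each $\mu_n$. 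The only implicit hypothesis you should flag is that $E$ is locally compact \emph{Hausdorff} (as in Bauer), which is what licenses the Urysohn-type cutoffs and inner regularity of Radon measures.
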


\begin{theorem}[\cite{Ba01}, Theorem 30.12]\lb{Bauer30.12}
Suppose that the sequence $\{\mu_n\}_{n\in\bbN} \subset \mathscr{M}_+^b(E)$ converges weakly to $\mu \in \mathscr{M}_+^b(E)$. Then
\begin{equation}
\lim_{n\rightarrow \infty}\int_E d\mu_n \, f = \int_E d\mu \, f
\end{equation}
holds for every bounded Borel measurable function $f$ that is $\mu$-almost everywhere continuous on $E$.  In particular, 
\begin{equation}
\lim_{n\rightarrow \infty}\mu_n(B)=\mu(B)
\end{equation}
holds for every $\mu$-boundaryless Borel set $B$.
\end{theorem}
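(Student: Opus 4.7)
The plan is to deduce the statement from standard Portmanteau-type characterizations of weak convergence, together with the weak convergence hypothesis $\mu_n \to \mu$.

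First I would handle the second assertion first, since it is a direct corollary of the first: if $B$ is a Borel set with $\mu(\partial B) = 0$, then $f = \chi_B$ is a bounded Borel function whose set of discontinuities is exactly $\partial B$, which has $\mu$-measure zero. So it suffices to prove the first claim, namely $\int_E f\, d\mu_n \to \int_E f\, d\mu$ for every bounded Borel function $f$ that is continuous $\mu$-almost everywhere.

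By linearity and by adding a constant, I may reduce to the case $0 \leq f \leq M$ for some $M < \infty$. The key step is to introduce the upper and lower envelopes
\begin{equation}
\bar{f}(x) = \limsup_{y \to x} f(y), \qquad \underline{f}(x) = \liminf_{y \to x} f(y), \quad x \in E,
\end{equation}
which are upper, respectively lower, semicontinuous and satisfy $\underline{f} \leq f \leq \bar{f}$ everywhere, with equality at every continuity point of $f$. The $\mu$-a.e.\ continuity hypothesis then gives $\int_E \underline{f}\, d\mu = \int_E f\, d\mu = \int_E \bar{f}\, d\mu$. The crux is now to sandwich the integrals against $\mu_n$: one approximates $\bar{f}$ from above by a decreasing sequence $g_k \in C_b(E)$ with $g_k \downarrow \bar{f}$ pointwise, and $\underline{f}$ from below by an increasing sequence $h_k \in C_b(E)$ with $h_k \uparrow \underline{f}$ pointwise (this step uses that $E$ is locally compact, together with Urysohn-type constructions applied to the open level sets). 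Weak convergence yields $\int h_k\, d\mu_n \to \int h_k\, d\mu$ and $\int g_k\, d\mu_n \to \int g_k\, d\mu$ for each fixed $k$, so that
\begin{equation}
\int_E h_k\, d\mu \;\leq\; \liminf_{n\to\infty}\int_E f\, d\mu_n \;\leq\; \limsup_{n\to\infty}\int_E f\, d\mu_n \;\leq\; \int_E g_k\, d\mu.
\end{equation}
Passing $k \to \infty$ and invoking monotone convergence (using $\mu(E) < \infty$ to bound the tails) pinches both outer quantities to $\int_E f\, d\mu$.

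I expect the main obstacle to be the construction of the continuous sandwiching sequences $g_k$ and $h_k$: this is where the topology of $E$ enters and requires either local compactness (for a Urysohn-lemma approach on open sets) or a metric structure (for a Lipschitz-approximation approach). In Bauer's setting $E$ is a locally compact Hausdorff space, which provides just enough regularity to carry out this approximation. Everything else—the reduction to nonnegative bounded $f$, the envelope argument, and the deduction of the boundaryless-set statement—is routine once the sandwiching sequences are in hand.
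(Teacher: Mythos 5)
This theorem is not proved in the paper at all: it is quoted verbatim from Bauer \cite[Theorem 30.12]{Ba01} and used as a black box, so there is no internal proof to compare against. Judged on its own merits, your argument is the standard one (envelope functions plus a Portmanteau-type sandwich), and the overall architecture is sound: the reduction of the boundaryless-set statement to the first claim via $f=\chi_B$ with discontinuity set $\partial B$ is correct, the normalization to $0\leq f\leq M$ is harmless, the identity $\int_E \underline{f}\,d\mu=\int_E f\,d\mu=\int_E \bar{f}\,d\mu$ follows from the $\mu$-a.e.\ continuity, and the interchange of the limits in $k$ and $n$ is handled correctly by fixing $k$, passing $n\to\infty$ via weak convergence, and then letting $k\to\infty$ by monotone/dominated convergence (finiteness of $\mu$ is indeed what makes this last step work). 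One small correction: the envelopes must be taken in the non-deleted sense, $\bar{f}(x)=\inf_{U\ni x}\sup_{y\in U}f(y)$ with the infimum over open neighborhoods $U$ containing $x$ itself; with the deleted $\limsup_{y\to x}$ as you wrote it, the inequality $f\leq \bar{f}$ can fail (e.g.\ $f=\chi_{\{0\}}$ on $\bbR$), which would break the sandwich.

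The one step that does not survive in the stated generality is the claim that a bounded upper semicontinuous function on a locally compact Hausdorff space is the pointwise limit of a \emph{decreasing sequence} of bounded continuous functions. This is true on metric spaces (via the inf-convolution $g_k(x)=\sup_y[\bar f(y)-k\,d(x,y)]$), but fails on general locally compact Hausdorff spaces: on $E=\omega_1+1$ the function $\chi_{\{\omega_1\}}$ is u.s.c., yet any countable decreasing family of continuous majorants has infimum $\geq 1/2$ on an entire tail $[\alpha,\omega_1]$, since continuous functions on $\omega_1+1$ are eventually constant. So as a proof of Bauer's theorem for arbitrary locally compact $E$ your argument has a gap precisely where you anticipated the "main obstacle." Two standard repairs: either work with the downward-filtering \emph{family} of all continuous majorants of $\bar f$ and use the Radon property of $\mu$ and the $\mu_n$ to pass the infimum through the integrals, or bypass continuous approximation entirely by writing $\int \bar f\,d\nu=\int_0^M\nu(\{\bar f\geq t\})\,dt$, applying the Portmanteau inequality $\limsup_n\mu_n(F)\leq\mu(F)$ for closed $F$, and invoking reverse Fatou with the uniform bound $\sup_n\mu_n(E)<\infty$. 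For the paper's actual use of the theorem, where $E=\bbR$, your metric-space argument is perfectly adequate.
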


As usual, finite signed Radon measures are viewed as differences of finite positive Radon measures in the following. 

Given these preparations, we now return to our concrete case at hand: Decomposing $V$ as 
\begin{equation}
V = V_+ - V_-, \quad V_{\pm} = [|V| \pm V]/2, 
\end{equation} 
and analogously for $V_R$, that is, $V_R = V_{R,+} - V_{R,-}$, we now decompose the spectral shift functions 
$\xi\big(\cdot; H_{(0,R);\alpha,\beta}, H_{(0,R);\alpha,\beta}^{(0)}\big)$ and 
$\xi\big(\cdot; H_{(0,\infty);\alpha}, H_{(0,\infty);\alpha}^{(0)}\big)$ according to 
\eqref{2.11a}, \eqref{2.11c} into a positive and negative contribution as follows,
\begin{align}
\xi\big(\cdot; H_{(0,R);\alpha,\beta}, H_{(0,R);\alpha,\beta}^{(0)}\big) &=
\xi\big(\cdot; H_{(0,R);\alpha,\beta}^{(0)} +_q V_{R,+} -_q V_{R,-}, 
H_{(0,R);\alpha,\beta}^{(0)} +_q V_{R,+}\big)    \no \\
& \quad + 
\xi\big(\cdot; H_{(0,R);\alpha,\beta}^{(0)} +_q V_{R,+}, 
H_{(0,R);\alpha,\beta}^{(0)}\big), \\
\xi\big(\cdot; H_{(0,\infty);\alpha}, H_{(0,\infty);\alpha}^{(0)}\big) &=
\xi\big(\cdot; H_{(0,\infty);\alpha}^{(0)} +_q V_+ -_q V_-, 
H_{(0,\infty);\alpha}^{(0)} +_q V_+\big)    \no \\
& \quad 
+ \xi\big(\cdot; H_{(0,\infty);\alpha}^{(0)} +_q V_+, H_{(0,\infty);\alpha}^{(0)}\big) 
\end{align} 
where 
\begin{align}
& \xi\big(\cdot; H_{(0,R);\alpha,\beta}^{(0)} +_q V_{R,+}, 
H_{(0,R);\alpha,\beta}^{(0)}\big) \geq 0,    \\
& \xi\big(\cdot; H_{(0,R);\alpha,\beta}^{(0)} +_q V_{R,+} -_q V_{R,-}, 
H_{(0,R);\alpha,\beta}^{(0)} +_q V_{R,+}\big) \leq 0,   \\
& \xi\big(\cdot; H_{(0,\infty);\alpha}^{(0)} +_q V_+, H_{(0,\infty);\alpha}^{(0)}\big) \geq 0,    \\
& \xi\big(\cdot; H_{(0,\infty);\alpha}^{(0)} +_q V_+ -_q V_-, 
H_{(0,\infty);\alpha}^{(0)} +_q V_+\big) \leq 0. 
\end{align}

\begin{theorem}\lb{t5.3}
Assume \eqref{3.8} and let $g\in C_{\infty}(\R)$. Then
\begin{equation}\lb{32}
\lim_{R\rightarrow \infty}\int_{\R}\frac{\xi\big(\lambda; H_{(0,R),\alpha,\beta}, 
H_{(0,R),\alpha,\beta}^{(0)}\big) d\lambda}{\lambda^2 + 1}g(\lambda) 
= \int_{\R}\frac{\xi\big(\lambda; H_{(0,\infty),\alpha}, 
H_{(0,\infty),\alpha}^{(0)}\big) d\lambda}{\lambda^2 + 1}g(\lambda).
\end{equation}
\end{theorem}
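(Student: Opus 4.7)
The plan is to apply Lemma \ref{l5.2} with
\[
f_R(\lambda)=\frac{\xi\big(\lambda;H_{(0,R),\alpha,\beta},H_{(0,R),\alpha,\beta}^{(0)}\big)}{\lambda^2+1},\qquad
f(\lambda)=\frac{\xi\big(\lambda;H_{(0,\infty),\alpha},H_{(0,\infty),\alpha}^{(0)}\big)}{\lambda^2+1},
\]
which reduces the theorem to verifying two facts: (i) a uniform bound $\|f_R\|_{L^1(\bbR;d\lambda)}\leq M$ independent of $R>0$, and (ii) convergence of $\int_\bbR f_R(\lambda)\,P((\lambda+i)^{-1},(\lambda-i)^{-1})\,d\lambda$ as $R\to\infty$ for every polynomial $P$ in two variables.

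For step (i), I would first note that the uniform semiboundedness $H_{(0,R),\alpha,\beta}\geq c_{\alpha,\beta}I_{(0,R)}$ from Lemma \ref{l4.6a}\,$(i)$ confines the support of $\xi_R$ to $[c_{\alpha,\beta},\infty)$. Writing $V=V_+-V_-$ with $V_\pm\geq 0$ and invoking the splitting displayed just before the statement of the theorem gives $\xi(\cdot;H_{(0,R),\alpha,\beta},H_{(0,R),\alpha,\beta}^{(0)})=\xi_{R,1}+\xi_{R,2}$, with $\xi_{R,1}\leq 0$ and $\xi_{R,2}\geq 0$ corresponding to monotone perturbations. On each monotone piece, fixed sign converts the trace identity \eqref{5-4} (with $n=1$, $z=-E$) into a trace-norm identity
\[
\int_{\bbR}\frac{|\xi_{R,j}(\lambda)|\,d\lambda}{(\lambda+E)^2}=\Big\|(H_1+E I_{(0,R)})^{-1}-(H_2+E I_{(0,R)})^{-1}\Big\|_{\cB_1(L^2((0,R);dx))},
\]
with $(H_1,H_2)$ the appropriate monotonically ordered pair. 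For $E$ sufficiently large (independent of $R$), a Krein-type resolvent identity bounds the right-hand side by a bounded multiple of $\big\|\ol{u_R(H_{(0,R),\alpha,\beta}^{(0)}+E)^{-1}v_R}\big\|_{\cB_1}$, respectively by an analogous Birman--Schwinger trace norm with $V_{R,+}$ absorbed into the background (compare Remark \ref{r4.7}). The uniform trace ideal estimates in Lemma \ref{l4.6a}\,$(i)$, together with the extension outlined in Remark \ref{r4.7}, yield an upper bound $\leq C(\alpha,\beta,V,E)$ that is uniform in $R$. Combining this with the elementary inequality $(\lambda+E)^2\leq 2E^2(1+\lambda^2)$ valid for $E\geq 1$ and all $\lambda\in\bbR$ produces $\|f_R\|_{L^1(\bbR;d\lambda)}\leq M$ uniformly in $R$; the half-line analog via Lemma \ref{l4.6a}\,$(ii)$ gives $f\in L^1(\bbR;d\lambda)$.

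For step (ii), expanding $P(x,y)=\sum_{j,k\geq 0}c_{j,k}x^jy^k$ yields
\[
\frac{P((\lambda+i)^{-1},(\lambda-i)^{-1})}{\lambda^2+1}=\sum_{j,k\geq 0}c_{j,k}(\lambda+i)^{-(j+1)}(\lambda-i)^{-(k+1)}.
\]
Iterating the identity $(\lambda+i)^{-1}(\lambda-i)^{-1}=(2i)^{-1}[(\lambda-i)^{-1}-(\lambda+i)^{-1}]$ (only on terms where both exponents exceed $1$) expresses each $(\lambda+i)^{-J}(\lambda-i)^{-K}$ with $J,K\geq 1$ as a finite linear combination of integrands of the form $[(\lambda-a)(\lambda-z)^n]^{-1}$ with $a,z\in\{i,-i\}$ and $n\in\bbN$; the induction on $J+K$ terminates precisely when one of the exponents drops to $1$. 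Lemma \ref{l5.1}, specifically \eqref{31}, supplies termwise convergence as $R\to\infty$, and by linearity the full integral converges. An invocation of Lemma \ref{l5.2} then completes the proof.

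The main obstacle is the uniform $L^1$ bound in step (i): for a sign-indefinite $V$ the trace identity controls only $|\tr(\cdot)|$ rather than the trace norm itself, so splitting $\xi_R$ into monotone pieces (which recovers the trace norm as the trace of a positive trace class operator) and feeding in the uniform-in-$R$ trace ideal estimates from Lemma \ref{l4.6a} and Remark \ref{r4.7} form the crux of the argument.
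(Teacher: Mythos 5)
Your proposal is correct and follows the same skeleton as the paper's proof: reduce \eqref{32} to convergence against polynomials in $(\lambda\pm i)^{-1}$ via Lemma \ref{l5.1}, then invoke the Stone--Weierstrass approximation of Lemma \ref{l5.2}. Your step (ii) is essentially identical to the paper's passage \eqref{5-18}--\eqref{5-20} (the paper uses the partial-fraction expansion of $(\lambda+i)^{-m}(\lambda-i)^{-n}$ rather than your inductive reduction, but both land on \eqref{31} with $z=\pm i$, $a=\mp i$). The one genuine divergence is step (i), which you identify as the crux and attack with uniform trace-norm estimates from Lemma \ref{l4.6a} and Remark \ref{r4.7}. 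That route can be made to work, but it is unnecessary: the paper observes that after the monotone splitting $\xi_R=\xi_{R,+}-\xi_{R,-}$ with $\xi_{R,\pm}\geq 0$ (which you also use), the quantity $\int_{\R}\xi_{R,\pm}(\lambda)(\lambda^2+1)^{-1}d\lambda$ \emph{is} the $L^1$-norm of $\xi_{R,\pm}(\cdot)(\cdot^2+1)^{-1}$, and this quantity converges as $R\to\infty$ by the $n=0$ case of \eqref{5-20} --- i.e., by the very convergence statement you have already established in step (ii) applied to the monotone pieces. A convergent sequence of reals is bounded, so the uniform $L^1$ bound is free, with no need for the Krein resolvent identity, the inequality $(\lambda+E)^2\leq 2E^2(1+\lambda^2)$, or the trace-ideal bounds of Lemma \ref{l4.6a}. (Note that to run this shortcut one must feed the \emph{nonnegative} functions $\xi_{R,\pm}(\cdot)(\cdot^2+1)^{-1}$ into Lemma \ref{l5.2} separately rather than the signed $f_R$, since for the signed function the convergence of $\int f_R$ controls only $|\int f_R|$ and not $\|f_R\|_{L^1}$; your version therefore genuinely needs the heavier step (i), whereas the paper's does not.)
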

\begin{proof}
Abbreviating temporarily 
\begin{align}
\xi\big(\cdot; H_{(0,R);\alpha,\beta}, H_{(0,R);\alpha,\beta}^{(0)}\big) &=
\xi_{R,+}(\cdot) - \xi_{R,-}(\cdot),    \lb{5.46} \\
\xi\big(\cdot; H_{(0,\infty);\alpha}, H_{(0,\infty);\alpha}^{(0)}\big) &= 
\xi_{+}(\cdot) - \xi_{-}(\cdot), 
\end{align}
for ease of notation, where 
\begin{align}
\xi_{R,+}(\cdot) &= \xi\big(\cdot; H_{(0,R);\alpha,\beta}^{(0)} +_q V_{R,+}, 
H_{(0,R);\alpha,\beta}^{(0)}\big) \geq 0,    \\
\xi_{R,-}(\cdot) &= - \xi\big(\cdot; H_{(0,R);\alpha,\beta}^{(0)} +_q V_{R,+} -_q V_{R,-}, H_{(0,R);\alpha,\beta}^{(0)} +_q V_{R,+}\big) \geq 0,   \\
\xi_{+}(\cdot) &= \xi\big(\cdot; H_{(0,\infty);\alpha}^{(0)} +_q V_+, H_{(0,\infty);\alpha}^{(0)}\big) \geq 0,    \\
\xi_{-}(\cdot) &= - \xi\big(\cdot; H_{(0,\infty);\alpha}^{(0)} 
+_q V_+ -_q V_-, H_{(0,\infty);\alpha}^{(0)} +_q V_+\big) \geq 0, 
\lb{5.51}
\end{align}
the basic idea is to verify that 
\begin{equation} \lb{5-18}
 \lim_{R\rightarrow \infty}\int_{\R}\frac{\xi_{R,\pm}(\lambda) d\lambda}{\lambda^2 + 1} \, P((\lambda+i)^{-1},(\lambda-i)^{-1})   
= \int_{\R}\frac{\xi_{\pm}(\lambda) d\lambda}{\lambda^2 + 1} \, 
P((\lambda+i)^{-1},(\lambda-i)^{-1})
\end{equation}
for all polynomials $P(\cdot,\cdot)$ in two variables, and then rely on the  
Stone--Weierstrass approximation in Lemma \ref{l5.2} to get 
\begin{equation} 
 \lim_{R\rightarrow \infty}\int_{\R}\frac{\xi_{R,\pm}(\lambda) d\lambda}{\lambda^2 + 1} \, g(\lambda)   
= \int_{\R}\frac{\xi_{\pm}(\lambda) d\lambda}{\lambda^2 + 1} \, g(\lambda), 
\quad g\in C_{\infty}(\R),    \lb{5.53}
\end{equation}
and hence \eqref{32}. (For another 
application of this technique in spectral theory, see, e.g., 
\cite[Theorem\ VIII.20]{RS80}.) To prove \eqref{5-18}, it suffices to verify that 
\begin{align}
\begin{split} \lb{5-19}
\lim_{R\rightarrow \infty}\int_{\R}\frac{\xi_{R,\pm}(\lambda) d\lambda}{\lambda^2 + 1}\frac{1}{(\lambda+i)^m(\lambda-i)^n} =\int_{\R}\frac{\xi_{\pm}(\lambda) 
d\lambda}{\lambda^2 + 1}\frac{1}{(\lambda+i)^m(\lambda-i)^n},& \\ 
 m,n\in \N\cup \{0\},&
\end{split}
\end{align}
which, in turn, follows once one proves  
\begin{equation} \lb{5-20}
\lim_{R\rightarrow \infty}\int_{\R}\frac{\xi_{R,\pm}(\lambda) d\lambda}
{\lambda^2 + 1}\frac{1}{(\lambda\pm i)^n} 
= \int_{\R}\frac{\xi_{\pm}(\lambda) d\lambda}
{\lambda^2 + 1}\frac{1}{(\lambda\pm i)^n},  \quad n\in \N\cup \{0\},
\end{equation}
since
\begin{equation}
\frac{1}{(\lambda+i)^m(\lambda-i)^n}=\sum_{j=1}^m\frac{c_j}{(\lambda+i)^j}+\sum_{j=1}^n\frac{\widehat{c}_j}{(\lambda-i)^j}
\end{equation}
for appropriate constants $c_j$ and $\widehat{c}_j$.
Choosing $z=\pm i$ and $a=\mp i$ in \eqref{31} yields \eqref{5-20}, and therefore \eqref{5-18} for all polynomials $P$. At this point, \eqref{32} follows from Lemma \ref{l5.2} once one shows the existence of an $M>0$ for which 
\begin{equation} 
\bigg|\int_{\R} \f{\xi_{R,\pm}(\lambda) d\lambda}{\lambda^2 + 1}\bigg| \leq M 
\end{equation} 
for $R$ sufficiently large.  Taking \eqref{5-20} with $n=0$, yields the convergence 
\begin{equation} 
\lim_{R\to\infty} \int_{\R} \f{\xi_{R,\pm}(\lambda) d\lambda}{\lambda^2 + 1} 
= \int_{\R} \f{\xi_{\pm}(\lambda) d\lambda}{\lambda^2 + 1}.     \lb{5.58}
\end{equation}
As a result, $\int_{\R}\xi_{R,\pm}(\lambda) d\lambda \, (\lambda^2 + 1)^{-1}$ is uniformly bounded with respect to $R$.
\end{proof}

An immediate consequence of Theorem \ref{t5.3} is the following vague convergence result: 

\begin{corollary} \lb{c5.4}
Assume \eqref{3.8} and let $g\in C_0(\R)$. Then
\begin{equation}
\lim_{R\rightarrow \infty}\int_{\R}\xi\big(\lambda; H_{(0,R),\alpha,\beta}, 
H_{(0,R),\alpha,\beta}^{(0)}\big) d\lambda \, g(\lambda)
= \int_{\R}\xi\big(\lambda; H_{(0,\infty),\alpha}, 
H_{(0,\infty),\alpha}^{(0)}\big) d\lambda \, g(\lambda).
\end{equation}
\end{corollary}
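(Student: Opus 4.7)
The plan is to deduce the corollary directly from Theorem \ref{t5.3} by absorbing the weight $(\lambda^2+1)^{-1}$ into the test function. Given $g \in C_0(\R)$, define $\widetilde{g}(\lambda) = (\lambda^2+1) g(\lambda)$ for $\lambda \in \R$. Since $g$ has compact support, so does $\widetilde{g}$, and in particular $\widetilde{g} \in C_0(\R) \subset C_{\infty}(\R)$.

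Next, I would apply Theorem \ref{t5.3} with the admissible test function $\widetilde{g}$ in place of $g$. This yields
\begin{align*}
& \lim_{R\to\infty}\int_{\R}\frac{\xi\big(\lambda; H_{(0,R),\alpha,\beta},
H_{(0,R),\alpha,\beta}^{(0)}\big)}{\lambda^2 + 1}\,\widetilde{g}(\lambda)\,d\lambda \\
& \quad = \int_{\R}\frac{\xi\big(\lambda; H_{(0,\infty),\alpha},
H_{(0,\infty),\alpha}^{(0)}\big)}{\lambda^2 + 1}\,\widetilde{g}(\lambda)\,d\lambda.
\end{align*}
Substituting $\widetilde{g}(\lambda)/(\lambda^2+1) = g(\lambda)$ on both sides immediately produces the desired identity.

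There is essentially no obstacle, since the weight $(\lambda^2+1)$ is continuous and the cancellation of the weight requires no delicate limiting procedure: $g$ has compact support, so $(\lambda^2+1)g(\lambda)$ is bounded and integrability is automatic. Thus the only content is the observation that $C_0(\R)$ is preserved under multiplication by the polynomial $\lambda^2+1$, which reduces vague convergence of the measures $\xi\big(\lambda; H_{(0,R),\alpha,\beta},H_{(0,R),\alpha,\beta}^{(0)}\big)\,d\lambda$ against $C_0(\R)$ to the already-proved convergence against $C_{\infty}(\R)$ of the weighted measures $\xi\big(\lambda;\cdot\big)(\lambda^2+1)^{-1}\,d\lambda$ from Theorem \ref{t5.3}.
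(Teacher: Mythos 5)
Your argument is correct and is exactly the paper's (implicit) reasoning: the corollary is stated as an immediate consequence of Theorem \ref{t5.3}, obtained by applying that theorem to $\widetilde{g}(\lambda)=(\lambda^2+1)g(\lambda)\in C_0(\R)\subset C_{\infty}(\R)$ so that the weight cancels. The same device appears verbatim in the paper's proof of Corollary \ref{C4}, so there is nothing to add.
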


\begin{remark} \lb{r5.5}
Since by \eqref{4.67} and \eqref{4.69}, $H_{(0,R),\alpha,\beta}$ and 
$H_{(0,\infty),\alpha}$, are 
uniformly bounded from below and hence we use our convention \eqref{2.9} of 
vanishing spectral shift functions in a fixed (i.e., $R$-independent) neighborhood 
of $-\infty$, no restrictions on $g$ near $- \infty$ need to be imposed (apart from measurability of $g$, of course).  
\end{remark}

\begin{remark} \lb{r5.6}
In the finite interval context, the corresponding spectral shift function 
$\xi\big(\cdot; H_{(0,R),\alpha,\beta}, H_{(0,R),\alpha,\beta}^{(0)}\big)$ is actually the difference of two eigenvalue counting functions (c.f.\ also 
\cite{HM10}, \cite{HM10a}, \cite{Ki87}), that is, 
\begin{equation}
\xi\big(\lambda; H_{(0,R),\alpha,\beta}, 
H_{(0,R),\alpha,\beta}^{(0)}\big)=N(\lambda;H_{(0,R),\alpha,\beta}^{(0)})-N(\lambda;H_{(0,R),\alpha,\beta} ), \quad \lambda \in\bbR,
\end{equation}
where we have written $N(\lambda;S)$ to denote the number of eigenvalues (counted according to multiplicity) of the self-adjoint operator $S$ that are less than or equal to $\lambda$.  Consequently, 
$\xi\big(\cdot; H_{(0,R),\alpha,\beta}, H_{(0,R),\alpha,\beta}^{(0)}\big)$ is an integer-valued function.  Thus, one cannot expect that the integer-valued functions 
$\xi\big(\cdot; H_{(0,R),\alpha,\beta}, H_{(0,R),\alpha,\beta}^{(0)}\big)$ converge pointwise to the spectral shift function 
$\xi\big(\cdot; H_{(0,\infty),\alpha}, H_{(0,\infty),\alpha}^{(0)}\big)$, since the latter may, under the assumption $V \in L^1((0,\infty); dx)$ (resp., \eqref{3.100}), $V$ real-valued, be taken continuous (and non-constant) on the half-line $(0,\infty)$, and the pointwise limit of integer-valued functions (if it exists) is necessarily integer-valued.  This line of reasoning also rules out convergence of 
$\xi\big(\lambda; H_{(0,R),\alpha,\beta}, H_{(0,R),\alpha,\beta}^{(0)}\big)(1+\lambda^2)^{-1}$ to $\xi\big(\lambda; H_{(0,\infty),\alpha}, 
H_{(0,\infty),\alpha}^{(0)}\big)(1+\lambda^2)^{-1}$ in the topology of 
$L^1((0,\infty);d\lambda)$ as $R\to\infty$, since the latter would imply the existence of a pointwise a.e.\ convergent subsequence, 
$\xi\big(\cdot; H_{(0,R_k),\alpha,\beta}, H_{(0,R_k),\alpha,\beta}^{(0)}\big)
\underset{k\to\infty}{\longrightarrow} 
\xi\big(\cdot; H_{(0,\infty),\alpha}, H_{(0,\infty),\alpha}^{(0)}\big)$.

A weaker notion of convergence is that of \textit{convergence in measure}.  However, since convergence in measure implies the existence of a pointwise a.e.\ convergent subsequence (cf., e.g., \cite[Corollary\ 2.32]{Fo99}), again one cannot expect that 
$\xi\big(\cdot; H_{(0,R),\alpha,\beta}, H_{(0,R),\alpha,\beta}^{(0)}\big)
\underset{R\to\infty}{\longrightarrow} 
\xi\big(\cdot; H_{(0,\infty),\alpha}, H_{(0,\infty),\alpha}^{(0)}\big)$ in measure. 

This argument (and its analog in higher dimensions) shows that the hope of a.e.\ convergence of spectral shift functions expressed in \cite[Remark\ 1.5\,(ii)]{HM10} for appropriate subsequences of $\{R_k\}_{k\in\bbN}$ cannot possibly materialize.  
\end{remark}

However, while Theorem \ref{t5.3} already improves on known results in the literature, it is not the best possible result in this direction and in the remainder of this section we will show how to improve on it. 

First, however, we note the following consequence of Theorem \ref{t5.4}:

\begin{lemma}\lb{L8}
Assume \eqref{3.8} and $E, E_1, E_2 \in\bbR$, $E_1 < E_2$. Then 
\begin{equation}
\xi\big(\cdot; H_{(0,R),\alpha,\beta}, H_{(0,R),\alpha,\beta}^{(0)}\big), \, 
\xi\big(\cdot; H_{(0,\infty),\alpha}, H_{(0,\infty),\alpha}^{(0)}\big) \in 
L^1((-\infty,E); d\lambda)    \lb{5.61}
\end{equation} 
and 
\begin{equation} \lb{175}
\lim_{R\rightarrow \infty}\int_{-\infty}^E 
\xi\big(\lambda; H_{(0,R),\alpha,\beta}, H_{(0,R),\alpha,\beta}^{(0)}\big) 
d\lambda = \int_{-\infty}^E 
\xi\big(\lambda; H_{(0,\infty),\alpha}, H_{(0,\infty),\alpha}^{(0)}\big) d\lambda,  
\end{equation}
and hence,
\begin{equation} \lb{178}
\lim_{R\rightarrow \infty}\int_{E_1}^{E_2} 
\xi\big(\lambda; H_{(0,R),\alpha,\beta}, H_{(0,R),\alpha,\beta}^{(0)}\big) 
d\lambda = \int_{E_1}^{E_2} 
\xi\big(\lambda; H_{(0,\infty),\alpha}, H_{(0,\infty),\alpha}^{(0)}\big) 
d\lambda.
\end{equation}
\end{lemma}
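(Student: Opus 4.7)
The plan is to apply Theorem \ref{t5.4} separately to the positive absolutely continuous measures $\xi_{R,\pm}(\lambda)\,d\lambda$ and $\xi_{\pm}(\lambda)\,d\lambda$ arising from the decomposition \eqref{5.46}--\eqref{5.51} of the signed spectral shift functions, and then recombine via $\xi = \xi_+ - \xi_-$. To invoke Theorem \ref{t5.4}, one needs to verify its hypothesis $(ii)$, that is, vague convergence of $\xi_{R,\pm}(\lambda)\,d\lambda$ to $\xi_{\pm}(\lambda)\,d\lambda$ together with the tail-decay condition \eqref{177}. The conclusion $(i)$ will then yield both the integrability statement \eqref{5.61} and the distribution-function convergence \eqref{175}.

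First I would establish the vague convergence $\xi_{R,\pm}(\lambda)\,d\lambda \to \xi_{\pm}(\lambda)\,d\lambda$, which is essentially already contained in the proof of Theorem \ref{t5.3}. Indeed, \eqref{5.53} asserts
\[
\lim_{R\to\infty} \int_{\R}\frac{\xi_{R,\pm}(\lambda)}{1+\lambda^2}\, h(\lambda)\,d\lambda
= \int_{\R}\frac{\xi_{\pm}(\lambda)}{1+\lambda^2}\, h(\lambda)\,d\lambda, \quad h \in C_{\infty}(\R),
\]
and substituting $h(\lambda) = (1+\lambda^2) g(\lambda) \in C_0(\R) \subset C_{\infty}(\R)$ for arbitrary $g \in C_0(\R)$ produces the required vague convergence of the unweighted positive measures.

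The tail condition \eqref{177} reduces to a triviality by Lemma \ref{l4.6a}: the uniform lower bounds \eqref{4.67} and \eqref{4.69}, together with the convention \eqref{2.9} that normalizes the spectral shift function to vanish in a fixed neighborhood of $-\infty$, force $\xi_{R,\pm}(\lambda) = \xi_{\pm}(\lambda) = 0$ for all $\lambda$ below some constant $c^*_{\alpha,\beta}$ independent of $R>0$. Consequently $\int_{-\infty}^{\lambda} \xi_{R,\pm}(\lambda')\,d\lambda' \equiv 0$ for $\lambda$ sufficiently negative, uniformly in $R$, which is even stronger than \eqref{177}.

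Theorem \ref{t5.4} then delivers finiteness of $\int_{-\infty}^{\lambda}\xi_{\pm}(\lambda')\,d\lambda'$ for every $\lambda \in \R$, yielding \eqref{5.61} via $\xi = \xi_+ - \xi_-$, together with convergence of the corresponding distribution functions at every $\lambda \in \R$ outside an at most countable set of atoms of $\xi_{\pm}\,d\lambda$. Since these measures are absolutely continuous with respect to Lebesgue measure they possess no atoms, so the convergence holds for every $\lambda \in \R$; subtraction of the $+$ and $-$ statements yields \eqref{175}, and writing $\int_{E_1}^{E_2} = \int_{-\infty}^{E_2} - \int_{-\infty}^{E_1}$ produces \eqref{178}. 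The only real subtlety is the mandatory decomposition of $\xi\,d\lambda$ into positive pieces before invoking Theorem \ref{t5.4}, since that theorem is stated only for positive measures; all remaining steps follow directly from Theorem \ref{t5.3} and the uniform lower bounds of Lemma \ref{l4.6a}.
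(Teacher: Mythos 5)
Your proposal is correct and follows essentially the same route as the paper: decompose into the positive pieces $\xi_{R,\pm},\xi_{\pm}$ of \eqref{5.46}--\eqref{5.51}, obtain vague convergence from \eqref{5.53}, verify the tail condition \eqref{177} via the uniform lower bounds \eqref{4.67}, \eqref{4.69} and the normalization \eqref{2.9}, invoke Theorem \ref{t5.4}, and use absolute continuity (no atoms) to get \eqref{175} for every $E$, with \eqref{178} by subtraction. The only cosmetic difference is that the paper deduces the integrability \eqref{5.61} directly from the vanishing of the spectral shift functions near $-\infty$ together with \eqref{2.10}, rather than from conclusion $(i)$ of Theorem \ref{t5.4}.
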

\begin{proof}
Since $\xi\big(\cdot; H_{(0,R),\alpha,\beta}, H_{(0,R),\alpha,\beta}^{(0)}\big)$,  
$\xi\big(\cdot; H_{(0,\infty),\alpha}, H_{(0,\infty),\alpha}^{(0)}\big)$ both vanish in a neighborhood of $-\infty$, the integrability assertion \eqref{5.61} holds.

Next we decompose 
$\xi\big(\cdot; H_{(0,R),\alpha,\beta}, H_{(0,R),\alpha,\beta}^{(0)}\big)$ 
and 
$\xi\big(\cdot; H_{(0,\infty),\alpha}, H_{(0,\infty),\alpha}^{(0)}\big)$ as in 
\eqref{5.46}--\eqref{5.51}. 

Since by \eqref{4.67} and \eqref{4.69}, $H_{(0,R),\alpha,\beta}$ and 
$H_{(0,\infty),\alpha}$, and similarly, these operators with $V$ replaced by 
$V_{R,\pm}$, $V_{\pm}$ are all uniformly bounded from below with respect to 
$R>0$, \eqref{177} applies with $\mu_n$ replaced by 
$\xi_{R,\pm}(\lambda) d\lambda$ and 
$\xi_{\pm}(\lambda) d\lambda$. Moreover, since these measures are all 
absolutely continuous and hence have no atoms, \eqref{175} holds for all 
$E\in\bbR$. The result \eqref{178} now follows from \eqref{175} by taking differences.
\end{proof}

Given the decomposition \eqref{5.46}--\eqref{5.51}, and introducing the measures
\begin{align}
\begin{split}
\eta_{\xi_{R,\pm}} (A)=\int_A\frac{\xi_{R,\pm} 
(\lambda) d\lambda}{\lambda^2 + 1}, \; R>0, \quad 
\eta_{\xi_{\pm}} (A)=\int_A\frac{\xi_{\pm} 
(\lambda) d\lambda}{\lambda^2 + 1},&    \lb{5.64} \\ 
\text{ $A \subseteq \bbR$ Lebesgue measurable,}&
\end{split}
\end{align}
we are now ready for the principal result of this paper.

\begin{theorem}\lb{T11}
Assume \eqref{3.8}. Then
\begin{align}\lb{179}
& \lim_{R\rightarrow \infty}\int_{\R} 
\frac{\xi\big(\lambda; H_{(0,R),\alpha,\beta}, H_{(0,R),\alpha,\beta}^{(0)}\big) 
d\lambda}{\lambda^2 + 1} \, f(\lambda) = 
\int_{\R}
\frac{\xi\big(\lambda; H_{(0,\infty),\alpha}, H_{(0,\infty),\alpha}^{(0)}\big) 
d\lambda}{\lambda^2 + 1} \, f(\lambda),    \no \\
& \hspace*{9.3cm}  f\in C_b(\R).
\end{align} 
Moreover, \eqref{179} holds for every bounded Borel measurable function $f$ that is $\eta_{\xi_+}$ and $\eta_{\xi_-}$-almost everywhere continuous 
on $\R$.  In particular,
\begin{equation}\lb{180}
\lim_{R\rightarrow \infty} \int_S 
\frac{\xi\big(\lambda; H_{(0,R),\alpha,\beta}, H_{(0,R),\alpha,\beta}^{(0)}\big) 
d\lambda}{\lambda^2 + 1} = \int_S 
\frac{\xi\big(\lambda; H_{(0,\infty),\alpha}, H_{(0,\infty),\alpha}^{(0)}\big) 
d\lambda}{\lambda^2 + 1}
\end{equation}
for every set $S$ whose boundary has $\eta_{\xi_+}$ and 
$\eta_{\xi_-}$-measure equal to zero.
\end{theorem}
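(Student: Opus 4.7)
The plan is to recast the theorem as a pair of weak convergence statements for the finite positive Borel measures $\eta_{\xi_{R,\pm}}\to\eta_{\xi_\pm}$ in the sense of Definition \ref{d5.3}, and then read off \eqref{179} and \eqref{180} by subtraction, using the abstract criteria Theorems \ref{Bauer30.8} and \ref{Bauer30.12}. Thanks to the sign decomposition \eqref{5.46}--\eqref{5.51}, both $\eta_{\xi_{R,\pm}}$ and $\eta_{\xi_\pm}$ are genuine positive measures; finiteness follows from the trace identity \eqref{5.0} (and its $(0,R)$-analog) evaluated at $z=i$, which expresses each total mass as a finite trace.

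First I would collect the ingredients already supplied by the proof of Theorem \ref{t5.3}. Equation \eqref{5.53} asserts
\begin{equation*}
\lim_{R\to\infty}\int_\R g\, d\eta_{\xi_{R,\pm}} \;=\; \int_\R g\, d\eta_{\xi_\pm}, \quad g\in C_\infty(\R),
\end{equation*}
so, restricting to $g\in C_0(\R)$, one obtains vague convergence $\eta_{\xi_{R,\pm}}\to\eta_{\xi_\pm}$. The companion statement \eqref{5.58} provides convergence of total masses $\eta_{\xi_{R,\pm}}(\R)\to\eta_{\xi_\pm}(\R)$. Theorem \ref{Bauer30.8} (equivalence of items (i) and (ii)) then upgrades vague convergence plus convergence of total masses to weak convergence, i.e., $\eta_{\xi_{R,\pm}}\to\eta_{\xi_\pm}$ in the sense of Definition \ref{d5.3}(ii).

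The first assertion of the theorem, \eqref{179} for $f\in C_b(\R)$, is immediate upon subtracting the two weak limits. For the extension to bounded Borel $f$ that are $\eta_{\xi_+}$- and $\eta_{\xi_-}$-almost everywhere continuous, I would invoke Theorem \ref{Bauer30.12} separately for each weakly convergent sequence $\eta_{\xi_{R,\pm}}\to\eta_{\xi_\pm}$ and then subtract; the hypothesis of almost-everywhere continuity with respect to \emph{both} $\eta_{\xi_+}$ and $\eta_{\xi_-}$ is exactly what legitimizes this. The boundaryless-set conclusion \eqref{180} is the special case $f=\chi_S$, since the hypothesis that $\partial S$ has both $\eta_{\xi_+}$- and $\eta_{\xi_-}$-measure zero is equivalent to $\chi_S$ being $\eta_{\xi_\pm}$-almost everywhere continuous.

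I do not anticipate a substantive obstacle: the analytic input has already been assembled in the proof of Theorem \ref{t5.3} and in Lemma \ref{L8}, and this proof merely packages that input in a form where the abstract Bauer criteria apply mechanically. The one point that wants some care is verifying the uniform-in-$R$ finiteness of $\eta_{\xi_{R,\pm}}(\R)$ that underlies the application of Theorem \ref{Bauer30.8}; this ultimately traces back to the uniform lower bounds \eqref{4.67}, \eqref{4.69} and the trace-norm estimate of Lemma \ref{l4.6a}, both of which are in hand.
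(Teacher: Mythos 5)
Your proposal is correct and follows essentially the same route as the paper's own proof: decompose via \eqref{5.46}--\eqref{5.51}, obtain vague convergence of $\eta_{\xi_{R,\pm}}$ from \eqref{5.53} (Corollary \ref{c5.4}) and convergence of total masses from \eqref{5-19} (equivalently \eqref{5.58}), upgrade to weak convergence via Theorem \ref{Bauer30.8}, and conclude with Theorem \ref{Bauer30.12}. The only cosmetic difference is that you spell out the subtraction step and the role of the two-sided almost-everywhere continuity hypothesis slightly more explicitly than the paper does.
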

\begin{proof}
Again we decompose 
$\xi\big(\cdot; H_{(0,R),\alpha,\beta}, H_{(0,R),\alpha,\beta}^{(0)}\big)$ 
and 
$\xi\big(\cdot; H_{(0,\infty),\alpha}, H_{(0,\infty),\alpha}^{(0)}\big)$ as in 
\eqref{5.46}--\eqref{5.51}. By \eqref{5.53} and Corollary \ref{c5.4}, the 
measure $\eta_{\xi_{R,\pm}}$ vaguely converges to the measure 
$\eta_{\xi_{\pm}}$ as $R\to\infty$, respectively. Moreover, by \eqref{5-19},
\begin{equation}
\lim_{R\to\infty} \eta_{\xi_{R,\pm}} (\R) = \eta_{\xi_{\pm}} (\R). 
\end{equation}
Thus, by Theorem \ref{Bauer30.8}, one concludes weak convergence of the sequence of measures $\eta_{\xi_{R,\pm}}$ to the measure 
$\eta_{\xi_{\pm}}$ as $R\to\infty$.  That \eqref{179} holds for every bounded Borel measurable function that is $\eta_{\xi_{\pm}}$-almost everywhere continuous on $\R$ now follows directly from Theorem \ref{Bauer30.12}. 
Finally, convergence in \eqref{180} is also a direct consequence of 
Theorem \ref{Bauer30.12}.
\end{proof}

As immediate consequences of Theorem \ref{T11}, we have the following two results:

\begin{corollary}
Assume \eqref{3.8}. Then convergence in \eqref{179} holds for any bounded Borel measurable function that is Lebesgue-almost everywhere continuous.  In particular, 
\eqref{180} holds for any set $S$ that is boundaryless with respect to Lebesgue 
measure $($i.e., any set $S$ for which the boundary of $S$ has Lebesgue measure 
equal to zero$)$.
\end{corollary}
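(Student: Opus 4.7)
The plan is to deduce the corollary as an immediate consequence of Theorem \ref{T11} by observing that the measures $\eta_{\xi_{\pm}}$ defined in \eqref{5.64} are absolutely continuous with respect to Lebesgue measure on $\bbR$. Indeed, from the definition
\[
\eta_{\xi_{\pm}} (A)=\int_A\frac{\xi_{\pm} (\lambda)\, d\lambda}{\lambda^2 + 1}, \quad A \subseteq \bbR \text{ Lebesgue measurable},
\]
it is clear that any Lebesgue null set is also $\eta_{\xi_{\pm}}$-null, $\eta_{\xi_{\pm}} \ll dx$.

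From this absolute continuity, I would conclude two things. First, if $f$ is a bounded Borel measurable function that is Lebesgue-a.e.\ continuous on $\bbR$, then the set of discontinuity points of $f$ has Lebesgue measure zero, hence $\eta_{\xi_+}$ and $\eta_{\xi_-}$-measure zero, so $f$ is $\eta_{\xi_+}$ and $\eta_{\xi_-}$-a.e.\ continuous on $\bbR$. Theorem \ref{T11} then yields the convergence \eqref{179} for such $f$. Second, if $S \subset \bbR$ is a Borel set whose boundary $\partial S$ has Lebesgue measure zero, then $\partial S$ also has $\eta_{\xi_{\pm}}$-measure zero, so $S$ is both $\eta_{\xi_+}$- and $\eta_{\xi_-}$-boundaryless; hence \eqref{180} follows by the second assertion of Theorem \ref{T11}.

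There is essentially no obstacle here: the only substantive ingredient is the (obvious) absolute continuity $\eta_{\xi_{\pm}} \ll dx$, after which both claims are immediate restatements of Theorem \ref{T11}. The proof is therefore a short two-sentence argument invoking absolute continuity and Theorem \ref{T11}.
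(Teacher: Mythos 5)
Your proposal is correct and follows exactly the paper's own argument: the paper likewise notes that $\eta_{\xi_{\pm}}$ are absolutely continuous with respect to Lebesgue measure and then invokes Theorem \ref{T11} directly. Your spelling out of why Lebesgue-a.e.\ continuity and Lebesgue-null boundaries transfer to the $\eta_{\xi_{\pm}}$ setting is a harmless elaboration of the same one-line proof.
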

\begin{proof}
Noting that $\eta_{\xi_{\pm}}$ are absolutely continuous with respect to Lebesgue measure, the statements follow directly from Theorem \ref{T11}.
\end{proof}

\begin{corollary}\lb{C4}
Assume \eqref{3.8}. If $g$ is a bounded Borel measurable function that is compactly supported and Lebesgue almost everywhere continuous on $\R$, then
\begin{equation}
\lim_{R\rightarrow \infty}\int_{\R} 
\xi\big(\lambda; H_{(0,R),\alpha,\beta}, H_{(0,R),\alpha,\beta}^{(0)}\big) 
d\lambda \, g(\lambda) 
=\int_{\R}\xi\big(\lambda; H_{(0,\infty),\alpha}, H_{(0,\infty),\alpha}^{(0)}\big) 
d\lambda \, g(\lambda).
\end{equation}
\end{corollary}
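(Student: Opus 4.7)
The strategy is to reduce Corollary \ref{C4} to Theorem \ref{T11} (or, more precisely, to the immediately preceding corollary) by absorbing the weight $(\lambda^2+1)^{-1}$ from the measure into the test function.

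Specifically, given $g$ bounded, Borel measurable, compactly supported, and Lebesgue-almost everywhere continuous on $\R$, I would define
\begin{equation*}
f(\lambda) = (\lambda^2 + 1) g(\lambda), \quad \lambda \in \R.
\end{equation*}
The plan is to verify that $f$ satisfies the hypotheses of the preceding corollary (bounded Borel measurable function on $\R$ that is Lebesgue-almost everywhere continuous). Boundedness of $f$ follows because $g$ is bounded and compactly supported, so the factor $(\lambda^2+1)$ contributes only a finite multiplicative constant on $\supp(g)$ while $f$ vanishes off $\supp(g)$. Borel measurability is preserved since $(\lambda^2+1)$ is continuous. Almost everywhere continuity with respect to Lebesgue measure is preserved because multiplication by a continuous function does not enlarge the discontinuity set of $g$.

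With $f$ in hand, the preceding corollary (the immediate consequence of Theorem \ref{T11}) yields
\begin{equation*}
\lim_{R\to\infty} \int_{\R} \frac{\xi\big(\lambda; H_{(0,R),\alpha,\beta}, H_{(0,R),\alpha,\beta}^{(0)}\big)\, d\lambda}{\lambda^2+1}\, f(\lambda) = \int_{\R} \frac{\xi\big(\lambda; H_{(0,\infty),\alpha}, H_{(0,\infty),\alpha}^{(0)}\big)\, d\lambda}{\lambda^2+1}\, f(\lambda),
\end{equation*}
and since $f(\lambda)/(\lambda^2+1) = g(\lambda)$ by construction, this is precisely the claimed identity.

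There is no real obstacle here; the entire content of Corollary \ref{C4} is the observation that the weight $(\lambda^2+1)^{-1}$ appearing in Theorem \ref{T11} can be eliminated for compactly supported test functions, because compact support compensates for the quadratic growth. The only point that deserves explicit mention in the write-up is the verification that multiplication by the (continuous) polynomial $(\lambda^2+1)$ preserves both boundedness (crucially using $\supp(g)$ compact) and Lebesgue-a.e.\ continuity, so that the hypotheses of the preceding corollary are indeed met.
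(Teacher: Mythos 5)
Your proof is correct and is essentially identical to the paper's own argument: the paper likewise sets $f(\lambda)=(\lambda^2+1)g(\lambda)$, notes that $f$ is bounded (because $g$ has compact support), Borel measurable, and Lebesgue-a.e.\ (hence $\eta_{\xi_{\pm}}$-a.e.) continuous, and then applies \eqref{179}. No gaps.
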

\begin{proof}
If $g$ satisfies the hypotheses of Corollary \ref{C4}, then choosing 
$f(\lambda):=(\lambda^2 + 1)g(\lambda)$ in \eqref{179} yields the result, noting that 
$f$ is a bounded ($g$ has compact support) Borel measurable function and is continuous Lebesgue-almost everywhere (and thus $\eta_{\xi_{\pm}}$-almost everywhere) on $\R$.
\end{proof}

We should perhaps point out that the regularized trace formula discussed in \cite{Fa57}, which also involves an infinite volume limit $R\to\infty$, differs from the result \eqref{179}.

For completeness we now also show the weak convergence result in Theorem \ref{T11} can be obtained via Helly's Selection Theorem combined with Theorem \ref{t5.4} (without using 
Theorems \ref{Bauer30.8}, \ref{Bauer30.12}, and  \ref{T11}). 

\begin{theorem}[Convergence via Helly's Selection Theorem]\lb{T12}
Assume \eqref{3.8}, then 
\begin{equation}\lb{Helly0}
\lim_{R\rightarrow \infty}\int_{\R}\frac{\xi_R(\lambda) d\lambda}{\lambda^2+1} \, f(\lambda) 
= \int_{\R}\frac{\xi(\lambda) d\lambda}{\lambda^2+1} \, f(\lambda), \quad 
f\in C_b(\R). 
\end{equation} 
\end{theorem}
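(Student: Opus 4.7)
The plan is to work with the positive/negative decomposition $\xi_R = \xi_{R,+} - \xi_{R,-}$ and $\xi = \xi_+ - \xi_-$ from \eqref{5.46}--\eqref{5.51}, and with the finite positive Borel measures $\eta_{\xi_{R,\pm}}$ and $\eta_{\xi_\pm}$ introduced in \eqref{5.64}. Relation \eqref{5.58} supplies both the uniform bound $\sup_{R>0}\eta_{\xi_{R,\pm}}(\bbR) < \infty$ and the total mass convergence $\eta_{\xi_{R,\pm}}(\bbR) \to \eta_{\xi_\pm}(\bbR)$, while the uniform lower bounds \eqref{4.67} and \eqref{4.69} imply that all of the measures in sight are supported in a fixed half-line $[c_{\alpha,\beta},\infty)$.

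For any sequence $R_n \to \infty$, I would apply Helly's Selection Theorem to the uniformly bounded, monotone non-decreasing distribution functions $F_{R_n,\pm}(\lambda) := \eta_{\xi_{R_n,\pm}}((-\infty,\lambda))$ to extract a subsequence $\{R_{n_k}\}$ and a bounded non-decreasing right-continuous function $\widetilde F_\pm$ with $F_{R_{n_k},\pm}(\lambda) \to \widetilde F_\pm(\lambda)$ at every continuity point of $\widetilde F_\pm$; denote the associated positive Borel measure by $\widetilde\mu_\pm$. The uniform support condition trivially yields \eqref{177} along this subsequence, so Theorem \ref{t5.4} in the direction (i) $\Rightarrow$ (ii) produces vague convergence $\eta_{\xi_{R_{n_k},\pm}} \to \widetilde\mu_\pm$. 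On the other hand, Theorem \ref{t5.3} applied with any $g \in C_0(\bbR) \subset C_\infty(\bbR)$ forces the vague limit of $\eta_{\xi_{R,\pm}}$ to equal $\eta_{\xi_\pm}$, hence $\widetilde\mu_\pm = \eta_{\xi_\pm}$. Since every subsequence of $\{\eta_{\xi_{R,\pm}}\}$ admits a sub-subsequence vaguely convergent to the same $\eta_{\xi_\pm}$, the full family converges vaguely. A second application of Theorem \ref{t5.4}, now in the direction (ii) $\Rightarrow$ (i), delivers pointwise distribution function convergence $F_{R,\pm}(\lambda) \to F_\pm(\lambda)$ for every $\lambda \in \bbR$, since the absolutely continuous measure $\eta_{\xi_\pm}$ has no atoms.

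The final step upgrades vague convergence to weak convergence on $C_b(\bbR)$. Given $f \in C_b(\bbR)$, choose a cutoff $\chi_N \in C_0(\bbR)$ with $0 \leq \chi_N \leq 1$ and $\chi_N \equiv 1$ on $[-N,N]$. Theorem \ref{t5.3}, applied to $f\chi_N \in C_0(\bbR) \subset C_\infty(\bbR)$, handles the bulk: $\int f\chi_N\,d\eta_{\xi_{R,\pm}} \to \int f\chi_N\,d\eta_{\xi_\pm}$ as $R \to \infty$ for each fixed $N$. The tail is estimated via
\begin{equation*}
\bigg| \int f(1-\chi_N)\,d\eta_{\xi_{R,\pm}} \bigg| \leq \|f\|_\infty\,\eta_{\xi_{R,\pm}}(\bbR \setminus [-N,N]),
\end{equation*}
and combining distribution function convergence at $\pm N$ with the total mass convergence from \eqref{5.58} yields $\lim_{R\to\infty} \eta_{\xi_{R,\pm}}(\bbR \setminus [-N,N]) = \eta_{\xi_\pm}(\bbR \setminus [-N,N])$, which tends to zero as $N \to \infty$ because $\eta_{\xi_\pm}$ is finite. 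A routine $\varepsilon/3$ argument then establishes \eqref{Helly0} for $\eta_{\xi_{R,\pm}}$, and subtraction gives \eqref{Helly0} for the signed measure $\xi_R(\lambda)(\lambda^2+1)^{-1}\,d\lambda$. The main obstacle lies in the bookkeeping of the subsequential extraction: the a priori sub-subsequential vague limit $\widetilde\mu_\pm$ produced by Helly must be unambiguously identified via Theorem \ref{t5.3}, and Theorem \ref{t5.4} has to be invoked in the correct direction at each of the two stages.
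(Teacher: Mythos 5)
Your proof is correct and follows essentially the same route as the paper: decompose $\xi_R$ into $\xi_{R,\pm}$, use \eqref{5.58} for total-mass convergence, obtain pointwise convergence of the distribution functions via Theorem \ref{t5.4} (with the tail condition \eqref{177} supplied by the uniform lower bound / tightness), and upgrade vague to weak convergence by tail control. The only differences are cosmetic: your initial Helly-selection extraction is redundant, since vague convergence of the full family is already available from Corollary \ref{c5.4}, and your explicit cutoff/$\varepsilon$-argument replaces the paper's citation of the generalized Helly second theorem from \cite{LS75}.
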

\begin{proof}
Once more we use the decompositions \eqref{5.46}--\eqref{5.51} and 
introduce the associated non-decreasing continuous functions $\sigma_{R,\pm}$, $R>0$, 
and $\sigma_{\pm}$ on $\R\cup\{\infty\}$ defined by
\begin{align}
\begin{split}
\sigma_{R,\pm} (\lambda) &= \int_{(-\infty,\lambda)} \f{\xi_{R,\pm}(\lambda') d\lambda'}{(\lambda^{\prime})^2 + 1},  \; R>0,   \\
\sigma_{\pm}(\lambda) &= \int_{(-\infty,\lambda)} \f{\xi_{\pm}(\lambda') 
d\lambda'}{(\lambda^{\prime})^2 + 1}, \quad \lambda \in \R\cup\{\infty\}. 
\end{split}
\end{align}
By \eqref{5.58} one has 
\begin{equation}\lb{Helly1a}
\lim_{R\rightarrow \infty}\sigma_{R,\pm} (\infty) = \sigma_{\pm} (\infty).
\end{equation}
Next, we aim at proving
\begin{equation}\lb{Helly1}
\lim_{R\rightarrow \infty}\sigma_{R,\pm} (\lambda) = 
\sigma_{\pm} (\lambda),  \quad \lambda \in \R.
\end{equation}
Having already shown vague convergence of $\{\eta_{R,\pm}\}$ to $\eta_{\pm}$ (cf.\ 
\eqref{5.64} and Corollary \ref{c5.4}), we now verify
\begin{equation}\lb{Helly2}
\lim_{E\downarrow -\infty}\Big(\limsup_{R\rightarrow \infty}\big(\sigma_{R,\pm}(E)\big)\Big)=0,
\end{equation}
as \eqref{Helly1} then follows from Theorem \ref{t5.4}. One notes that \eqref{Helly2} 
follows if one can show that 
\begin{align} 
\begin{split} \lb{Helly3}
& \text{for all $\varepsilon>0$, there exists $E(\varepsilon) \in \R$ such that 
$\sigma_{R,\pm}(E) \leq \varepsilon$}  \\
& \quad \text{for all $E \leq E(\varepsilon)$ and all $R>0$.}
\end{split} 
\end{align}
Let $\varepsilon>0$ be given. By \eqref{Helly1a} there exists an interval 
$K_{\varepsilon}=[a(\varepsilon),b(\varepsilon)]$ such that
\begin{equation}
\eta_{R,\pm} (\R\backslash K_{\varepsilon}) = 
\int_{\R \backslash K_{\varepsilon}} \f{\xi_{R,\pm}(\lambda) d\lambda}{\lambda^2+1}
\leq \varepsilon, \quad R>0,
\end{equation} 
which is to say that
\begin{equation}\lb{Helly4}
\int_{-\infty}^{a(\varepsilon)}\frac{\xi_{R,\pm}(\lambda)}{1+\lambda^2}d\lambda 
+ \int_{b(\varepsilon)}^{\infty}\frac{\xi_{R,\pm}(\lambda)}{1+\lambda^2}d\lambda 
\leq \varepsilon, \quad R>0.
\end{equation}
Thus, one sees that \eqref{Helly3} is valid once one takes $E(\varepsilon)=a(\varepsilon)$.

Now, fix $f\in C_b(\R)$; we may assume without loss that 
$\|f\|_{\infty}=\sup_{\lambda\in R}|f(\lambda)|\neq 0$ (otherwise, $f(\lambda)=0$ for all 
$\lambda\in\bbR$, and the convergence \eqref{Helly0} is trivial). By \eqref{Helly1a}, 
for any $\varepsilon>0$, there exists an $A(\varepsilon)>0$ such that for all 
$a,b>A(\varepsilon)$, 
\begin{equation}
\int_{-\infty}^{a}\frac{\xi_{R,\pm}(\lambda) d\lambda}{\lambda^2+1} 
+ \int_{b}^{\infty}\frac{\xi_{R,\pm}(\lambda) d\lambda}{\lambda^2+1}
\leq \frac{\varepsilon}{\|f\|_{\infty}},    \quad R>0.
\end{equation}
Thus, for all $a,b>A(\varepsilon)$ one has
\begin{align}
\int_{-\infty}^{a}\frac{\xi_{R,\pm}(\lambda) d\lambda}{\lambda^2+1} \, |f(\lambda)|
&\leq \|f\|_{\infty}\int_{-\infty}^{a}\frac{\xi_{R,\pm}(\lambda) d\lambda}{\lambda^2+1} 
\leq \varepsilon,   \quad R>0, \\
\int_{b}^{\infty}\frac{\xi_{R,\pm}(\lambda) d\lambda}{\lambda^2+1} \, |f(\lambda)|
&\leq \|f\|_{\infty}\int_{-\infty}^{a}\frac{\xi_{R,\pm}(\lambda) d\lambda}{\lambda^2+1}
\leq \varepsilon,   \quad R>0.
\end{align}
At this point one has verified the asumptions (2.7) in the generalized version of 
Helly's second selection theorem in the form of \cite[Sect.\ XIV.2]{LS75}, implying \eqref{Helly0}.
\end{proof}

\begin{remark} \lb{r4.16}
We conclude with a brief remark on an alternative approach to continuity of 
spectral shift functions. In cases where a fixed Hilbert space is naturally given, 
an abstract convergence result of spectral shift functions in the 
$L^1\big(\bbR; (\lambda^2 + 1)^{-1}d\lambda\big)$-norm, based on trace norm convergence of resolvents,  
has been given in \cite[Lemma\ 8.7.5]{Ya92} with a refinement in the case of semibounded self-adjoint operators in \cite[Lemma\ 8.3]{KT09}. This is then applied 
to derive relative oscillation theory results in essential spectral gaps between 
different Sturm--Liouville operators in \cite{KT08} and \cite{KT09}. This approach differs from the one developed in this paper as in our case one cannot expect subsequences of spectral functions to converge pointwise a.e.\ as discussed at the end of Remark \ref{r5.6}.
\end{remark}

\appendix
\section{Basic Facts on Spectral Shift Functions}
\lb{sA}
\renewcommand{\theequation}{A.\arabic{equation}}
\renewcommand{\thetheorem}{A.\arabic{theorem}}
\setcounter{theorem}{0} \setcounter{equation}{0}

In this appendix we succinctly summarize properties of the spectral shift function as needed in the bulk of this paper (for details on this material we refer to \cite{BY93}, 
\cite[Ch.\ 8]{Ya92}, \cite{Ya07}, \cite[Sect.\ 0.9, Chs.\ 4, 5, 9]{Ya10}).

We start with the following basic assumptions:

\begin{hypothesis} \lb{h2.1}
Suppose $A$ and $B$ are self-adjoint operators in $\cH$ with $A$ bounded from below. \\
$(i)$ Assume that $B$ can be written as the form sum of $A$ and 
a self-adjoint operator $W$ in $\cH$
\begin{equation}
B = A +_q W,     \lb{2.1}
\end{equation}
where $W$ can be factorized into
\begin{equation}
W = W_1 W_2,    \lb{2.2}
\end{equation}
such that 
\begin{equation}
\dom(W_j) \supseteq \dom\big(|A|^{1/2}\big), \quad j =1,2,   \lb{2.2a}
\end{equation}
and 
\begin{equation}
\ol{W_2 (A - z I_{\cH})^{-1} W_1} \in \cB_1(\cH), \quad z \in \rho(A).   \lb{2.3}
\end{equation}
$(ii)$ In addition, we suppose that for some $($and hence for all\,$)$ 
$z \in \rho(B) \cap \rho(A)$,
\begin{equation}
\big[(B - z I_{\cH})^{-1} - (A - z I_{\cH})^{-1}\big] \in \cB_1(\cH).  \lb{2.4}
\end{equation}
\end{hypothesis}

Given Hypothesis \ref{h2.1}\,$(i)$, one observes that 
\begin{equation}
\dom\big(|B|^{1/2}\big) = \dom\big(|A|^{1/2}\big),    \lb{2.5} 
\end{equation}
and that the resolvent of $B$ can be written as (cf., e.g., the detailed discussion in 
\cite{GLMZ05} and the references therein)
\begin{align}
&(B - z I_{\cH})^{-1} = (A - z I_{\cH})^{-1}    \no \\ 
& \quad - \ol{(A - z I_{\cH})^{-1} W_1} 
\big[I_{\cH} + \ol{W_2 (A - z I_{\cH})^{-1} W_1}\big]^{-1} W_2 (A - z I_{\cH})^{-1}, \lb{2.6}
\\
& \hspace*{7.65cm} z \in \rho(B) \cap \rho(A).     \no
\end{align}
In particular, $B$ is bounded from below in $\cH$.

Moreover, assuming the full Hypothesis \ref{h2.1} one infers that (cf.\ \cite{GZ10}) 
\begin{align}
& {\tr}_{\cH}\big((B - z I_{\cH})^{-1} - (A - z I_{\cH})^{-1}\big)   \no \\
& \quad 
= - \f{d}{dz} \ln\Big({\det}_{\cH}\Big(\ol{(B - z I_{\cH})^{1/2}(A - z I_{\cH})^{-1}
	(B - z I_{\cH})^{1/2}}\Big)\Big)    \no \\
&\quad =  - \f{d}{dz} \ln\Big({\det}_{\cH}\Big(I_{\cH} +\ol{W_2 (A - z I_{\cH})^{-1} 
W_1}\Big)\Big), \quad z \in \rho(B) \cap \rho(A).     \lb{2.7}
\end{align}
Here ${\det}_{\cH}(\cdot)$ denotes the Fredholm determinant (cf.\ \cite[Ch.\ IV]{GK69},
\cite{Si77}, \cite[Ch.\ 3]{Si05}). 

In addition, Hypothesis \ref{h2.1} guarantees the existence of the real-valued spectral shift function $\xi(\cdot; B,A)$ satisfying
\begin{align}
\begin{split} 
{\tr}_{\cH}\big((B - z I_{\cH})^{-1} - (A - z I_{\cH})^{-1}\big) 
= - \int_{\bbR} \f{\xi(\lambda; B,A) \, d\lambda}{(\lambda - z)^2},&   \lb{2.8} \\
z \in\bbC\backslash [\inf(\sigma(B), \sigma(A)),\infty),& 
\end{split}  
\end{align}
and 
\begin{align}
& \xi(\lambda; B,A) = 0, \quad \lambda < \inf(\sigma(B), \sigma(A)),    \lb{2.9} \\
& \xi(\cdot; B,A) \in L^1\big(\bbR; (\lambda^2 + 1)^{-1} d\lambda\big). 
\lb{2.10}
\end{align}
Moreover, for a large class of functions $f$ (e.g., any $f$ s.t.\ 
$\hatt f (\cdot) \in L^1(\bbR; (|p| + 1) dp)$) one infers that 
$[f(B) - f(A)] \in \cB_1(\cH)$ and 
\begin{equation}
{\tr}_{\cH} (f(B) - f(A)) = \int_{\bbR} f'(\lambda) \, \xi(\lambda;B,A) \, d\lambda. 
\lb{2.11}
\end{equation}
This applies, in particular, to powers of the resolvent, where 
$f(\cdot) = (\cdot -z)^{-n}$, $n \in \bbN$, and we refer to \cite[Ch.\ 8]{Ya92} 
for more details. 

Throughout this manuscript we assume that the normalization \eqref{2.9} is applied. 

We also note the following monotonicity result: If 
\begin{align}
\begin{split}
& B \geq A \, \text{ (resp., $B \leq A$) in the sense of quadratic forms, then } \\
& \quad \xi(\lambda; B,A) \geq 0 \, \text{ (resp., $\xi(\lambda; B,A) \leq 0$).}
\lb{2.11a}
\end{split} 
\end{align}
Here, $B \geq A$ is meant in the sense of quadratic forms, that is, 
\begin{align}
& \dom \big(|A|^{1/2}\big) \supseteq \dom \big(|B|^{1/2}\big) \, \text{ and } 
\lb{2.11b} \\ 
& \quad \big(|B|^{1/2} f, \sgn(B) |B|^{1/2} f\big)_{\cH} \geq 
\big(|A|^{1/2} f, \sgn(A) |A|^{1/2} f\big)_{\cH}, \quad 
f \in \dom \big(|B|^{1/2}\big).    \no 
\end{align}

Next, suppose that the self-adjoint operator $C$ in $\cH$ can be written as the form sum of $B$ and a self-adjoint operator $Q$ in $\cH$, $C = B \dot + Q$, where $Q$ can be factored as $Q=Q_1 Q_2$, with $Q, Q_1, Q_2$ satisfying the assumptions of $W, W_1, W_2$ in Hypotheses \ref{h2.1}. Then the formula
\begin{equation}
\xi(\lambda;C,A) = \xi(\lambda;C,B) + \xi(\lambda;B,A) \, 
\text{ for a.e.\ $\lambda \in \bbR$,}     \lb{2.11c}
\end{equation}
holds. 

Finally, we mention the connection between $\xi(\cdot; B,A)$ and the Fredholm determinant in \eqref{2.7},
\begin{equation}
\xi(\lambda; B,A) = \pi^{-1} \lim_{\varepsilon\downarrow 0} \Im\Big(\ln\Big(I_{\cH}
+ \ol{W_2 (A - (\lambda + i \varepsilon) I_{\cH})^{-1} W_1}\Big)\Big) \, 
\text{ for a.e.\ $\lambda \in \bbR$},    \lb{2.12}
\end{equation}
choosing the branch of $\ln({\det}_{\cH}(\cdot))$ on $\bbC_+$ such that 
\begin{equation}
\lim_{\Im(z) \uparrow +\infty} \ln\Big({\det}_{\cH}\Big(I_{\cH} 
+ \ol{W_2 (A - z I_{\cH})^{-1} W_1}\Big)\Big) = 0.    \lb{2.13} 
\end{equation}

\section{A Spectral Shift Function Decomposition}
\lb{sB}
\renewcommand{\theequation}{B.\arabic{equation}}
\renewcommand{\thetheorem}{B.\arabic{theorem}}
\setcounter{theorem}{0} \setcounter{equation}{0}

We conclude this paper with an interesting formula that provides a comparison of the 
spectral shift functions for the interval $[0,R_1]$ and $[0,R_2]$, where 
$0<R_1<R_2$, under the assumption of Dirichlet boundary conditions. 

To simplify our notation a bit, the Dirichlet boundary conditions at $0$, $R_1$, and 
$R_2$ will be indicated by the subscript ``D'' and the symbol $H^{(0)}$ will be 
replaced by $-\Delta$.

We begin with the following result. 

\begin{lemma}\lb{lB.1}
Let $0<R_1<R_2$ and assume that $V \in L^1((0,R_2);dx)$ is real-valued. Then 
$\xi(\cdot;-\Delta_{(0,R_1)\cup(R_1,R_2),D}+V_{R_2}, -\Delta_{(0,R_1)\cup(R_1,R_2),D})$ 
can be decomposed into a sum of two spectral shift functions as follows,
\begin{align}  
& \xi(\lambda; -\Delta_{(0,R_1)\cup(R_1,R_2),D}+V_{R_2}, 
-\Delta_{(0,R_1)\cup(R_1,R_2),D})      \no \\ 
& \quad = \xi(\lambda; -\Delta_{(0,R_1),D} + V_R, -\Delta_{(0,R_1),D})   \lb{164} \\
& \qquad +\xi(\lambda; -\Delta_{(R_1, R_2),D} + V|_{(R_1, R_2)},-\Delta_{(R_1, R_2),D})  
\,  \text{ for a.e.\ $\lambda \in\bbR$.}   \no 
\end{align}
\end{lemma}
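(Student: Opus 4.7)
The plan is to exploit the direct sum structure induced by the interior Dirichlet boundary condition at $x = R_1$. With Dirichlet boundary conditions imposed at $0$, $R_1$, and $R_2$, functions in the domain of $-\Delta_{(0,R_1)\cup(R_1,R_2),D}$ must vanish at $R_1$ from both sides, decoupling the two subintervals. Consequently, with respect to the orthogonal decomposition
\begin{equation}
L^2((0,R_2); dx) = L^2((0,R_1); dx) \oplus L^2((R_1,R_2); dx),  \no
\end{equation}
one has the operator identity
\begin{equation}
-\Delta_{(0,R_1)\cup(R_1,R_2),D} = -\Delta_{(0,R_1),D} \oplus \big(-\Delta_{(R_1,R_2),D}\big),   \no
\end{equation}
and, since $V_{R_2}$ is a multiplication operator which respects the decomposition,
\begin{equation}
-\Delta_{(0,R_1)\cup(R_1,R_2),D} + V_{R_2} = \big(-\Delta_{(0,R_1),D} + V_{R_1}\big) \oplus \big(-\Delta_{(R_1,R_2),D} + V|_{(R_1,R_2)}\big).    \no
\end{equation}

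The second step is to verify the trace class assumption required for the spectral shift functions to exist via the Krein trace formula \eqref{2.8}. On each finite subinterval $(0,R_1)$ and $(R_1, R_2)$, with $V$ real-valued and $V \in L^1$, Hypothesis \ref{h2.1} is satisfied (invoke Theorem \ref{t3.3} applied on each subinterval, which provides the trace class property \eqref{3.36} for the Birman--Schwinger-type kernel and hence, via \eqref{2.6}, the trace class property \eqref{2.4} for the resolvent difference). The corresponding trace class property on $(0,R_2)$ follows by taking the direct sum of trace class operators.

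The third and central step is to apply the resolvent trace formula \eqref{2.8} and use uniqueness of the spectral shift function. For any $z \in \bbC \backslash \bbR$, the resolvent of a direct sum is the direct sum of the resolvents, so the trace of the resolvent difference splits as
\begin{align}
& {\tr} \big[\big(-\Delta_{(0,R_1)\cup(R_1,R_2),D} + V_{R_2} - z\big)^{-1}
- \big(-\Delta_{(0,R_1)\cup(R_1,R_2),D} - z\big)^{-1} \big]     \no \\
& \quad = {\tr} \big[\big(-\Delta_{(0,R_1),D} + V_{R_1} - z\big)^{-1} - \big(-\Delta_{(0,R_1),D} - z\big)^{-1} \big]     \no \\
& \qquad + {\tr} \big[\big(-\Delta_{(R_1,R_2),D} + V|_{(R_1,R_2)} - z\big)^{-1} - \big(-\Delta_{(R_1,R_2),D} - z\big)^{-1} \big].      \no
\end{align}
Rewriting each side using \eqref{2.8}, one obtains equality of Cauchy transforms of the respective spectral shift measures on $\bbC_+$. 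Since all three operators involved are bounded from below and the normalization \eqref{2.9} places each $\xi$ as zero in a common neighborhood of $-\infty$, the Stieltjes inversion theorem (applied to $(\lambda - z)^{-2}$ transforms, cf.\ the standard uniqueness argument in \cite[Ch.\ 8]{Ya92}) yields \eqref{164} for a.e.\ $\lambda \in \bbR$.

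The only mild obstacle is bookkeeping the normalizations: one must check that the left-hand side of \eqref{164} vanishes on the same neighborhood of $-\infty$ where both summands on the right-hand side vanish. This is immediate from the uniform lower bound \eqref{3.108} (equivalently \eqref{4.67}) applied to all four operators $-\Delta_{(0,R_1),D} + V_{R_1}$, $-\Delta_{(0,R_1),D}$, $-\Delta_{(R_1,R_2),D} + V|_{(R_1,R_2)}$, $-\Delta_{(R_1,R_2),D}$, and their direct sums, which share a common lower bound. With normalizations aligned, the decomposition \eqref{164} follows.
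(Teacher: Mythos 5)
Your proof is correct, and it rests on the same structural fact as the paper's argument, namely that the interior Dirichlet condition at $R_1$ decouples the problem into the direct sum $-\Delta_{(0,R_1),D}\oplus(-\Delta_{(R_1,R_2),D})$, with $V_{R_2}$ splitting accordingly. Where you diverge is in how you convert this decoupling into the identity \eqref{164} for the spectral shift functions: the paper works multiplicatively, factoring the Fredholm determinant over $L^2((0,R_2);dx)$ into the product of the two subinterval determinants (its equation \eqref{166}) and then reading off $\xi$ as the boundary value of the argument of the determinant via \eqref{2.12}; you work additively, splitting the trace of the resolvent difference over the direct sum, invoking the Krein trace formula \eqref{2.8} on each piece, and then concluding by uniqueness of the Stieltjes-type transform together with the common normalization \eqref{2.9} at $-\infty$. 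The two routes are essentially logarithmic derivatives of one another, but each buys something slightly different: the determinant route is the one the paper needs anyway for the second half of Appendix B (where the rank-one correction term $\cS(z)$ enters multiplicatively and an additive trace argument would be less transparent), whereas your trace route avoids any discussion of branches of the logarithm and of the a.e.\ existence of the boundary values in \eqref{2.12}, replacing it with the cleaner (and fully justified in your write-up) Stieltjes inversion plus the observation that all operators involved are uniformly bounded from below, so the normalizations agree. Your verification of the trace class hypotheses via \eqref{3.36} and \eqref{2.6} on each subinterval, and the remark that a direct sum of trace class operators is trace class, supplies the one prerequisite the paper leaves implicit.
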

\begin{proof}
The proof relies essentially on the classical fact that 
\begin{equation} 
-\Delta_{(0,R_1)\cup(R_1,R_2),D}=-\Delta_{(0,R_1),D}\oplus -\Delta_{(R_1, R_2),D}  
\end{equation}
(cf., e.g., \cite[Proposition\ X.III.13.3]{RS78}) and of course
\begin{equation}
V_{R_2} = V_{R_1} \oplus V|_{(R_1,R_2)},
\end{equation}  
both with respect to the decomposition 
\begin{equation}
L^2((0,R_2); dx) = L^2((0,R_1); dx) \oplus L^2((R_1,R_2); dx). 
\end{equation}

One computes
\begin{align}
&{\det}_{L^2((0,R_2);dx)}\big(I_{(0,R_2)}+u_{R_2} (-\Delta_{(0,R_1)\cup(R_1,R_2),D} 
- z)^{-1}v_{R_2}\big)   \\
&\quad ={\det}_{L^2((0,R_2);dx)}\big(I_{(0,R_1)}\oplus I_{(R_1,R_2)}+ u_{R_1} 
\oplus u|_{(R_1,R_2)}\big) \no \\
&\qquad \times (-\Delta_{(0,R_1),D}\oplus -\Delta_{(R_1, R_2),D}-z)^{-1}(v_{R_1}
\oplus v|_{(R_1,R_2)})\big)\no \\
&\quad ={\det}_{L^2((0,R_1);dx)}\big(I_{(0,R_1)} 
+ u_{R_1}(-\Delta_{(0,R_1),D}-z)^{-1}v_{R_1}\big)\no \\
&\qquad \times {\det}_{L^2((R_1,R_2);dx)}\big(I+u|_{(R_1,R_2)}
(-\Delta_{(R_1, R_2),D}-z)^{-1}v|_{(R_1,R_2)}\big).  \lb{166}
\end{align}
Therefore, replacing $z$ by $\lambda + i\varepsilon $, taking arguments, and finally the 
limit $\varepsilon \downarrow 0$ proves \eqref{164}.
\end{proof}

Krein's resolvent formula then shows that the difference of the resolvents of 
$-\Delta_{(0,R_2),D}$ and $-\Delta_{(0,R_1)\cup(R_1,R_2),D}$ is a rank one operator.  Explicitly, in terms of integral kernels, one has
\begin{align}
\begin{split} 
& G_{(0,R_1)\cup(R_1,R_2),D}^{(0)}(z,x,x^{\p}) = G_{(0,R_2),D}^{(0)}(z,x,x^{\p}) \\
& \quad -\frac{G_{(0,R_2),D}^{(0)}(z,x,R)G_{(0,R_2),D}^{(0)}(z,R,x^{\p})}
{G_{(0,R_2),D}^{(0)}(z,R,R)},
\quad z\in \C \backslash  \R, \lb{167} 
\end{split} 
\end{align}
where $G_{\Omega,D}^{(0)}(z,x,x^{\p})$ denotes the integral kernel of 
$(-\Delta_{\Omega,D} - z I_{\Omega})^{-1}$.  Therefore, we write
\begin{equation}\lb{168}
(-\Delta_{(0,R_2),D}-z)^{-1}=( -\Delta_{(0,R_1)\cup(R_1,R_2),D}-z)^{-1}+T(z), \quad z\in \C \backslash  \R,
\end{equation}
where $T(z)$ is the rank-one operator generated by the integral kernel given by the second term on the right-hand side of \eqref{167}.  Thus, one computes
\begin{align}
&{\det}_{L^2((0,R_2);dx)}\big(I_{(0,R_2)}+u_{R_2}(-\Delta_{(0,R_2),D}-z)^{-1}v_{R_2}\big)\no \\
& \quad ={\det}_{L^2((0,R_2);dx)}\big(I_{(0,R_2)}+u_{R_2}(-\Delta_{(0,R_1)\cup(R_1,R_2),D}-z)^{-1}v_{R_2}+u_{R_2}T(z)v_{R_2}\big)\no \\
& \quad ={\det}_{L^2((0,R_2);dx)}\big(\big[I_{(0,R_2)}+u_{R_2}(-\Delta_{(0,R_1)\cup(R_1,R_2),D}-z)^{-1}v_{R_2}\big]     \no \\
&\qquad \times \big\{I_{(0,R_2)}+\big[I_{(0,R_2)}+u_{R_2}(-\Delta_{(0,R_1)\cup(R_1,R_2),D}-z)^{-1}v_{R_2}\big]^{-1}u_{R_2}T(z)v_{R_2}\big\}\big)\no \\
& \quad = {\det}_{L^2((0,R_2);dx)}\big(I_{(0,R_2)}+u_{R_2}(-\Delta_{(0,R_1)\cup(R_1,R_2),D}-z)^{-1}v_{R_2}\big)  \no \\
&\qquad \times{\det}_{L^2((0,R_2);dx)}\big(I_{(0,R_2)}+\big[I_{(0,R_2)}
+u_{R_2}(-\Delta_{(0,R_1)\cup(R_1,R_2),D}-z)^{-1}v_{R_2}\big]^{-1}   \no \\ 
& \hspace*{3.5cm} \times u_{R_2}T(z)v_{R_2}\big), \quad  z\in \C \backslash  \R.   
  \lb{168b}
\end{align}
Since ${\det}_{\cH}(I+A)=1+\tr_{\cH} (A)$, whenever $A$ is a rank-one operator in $\cH$, 
the second determinant on the right-hand side of \eqref{168b} can be computed as follows:
\begin{align}
&{\det}_{L^2((0,R_2);dx)}\big(I_{(0,R_2)}
+ \big[I_{(0,R_2)}+u_{R_2}(-\Delta_{(0,R_1)\cup
(R_1,R_2),D}-z)^{-1}v_{R_2}\big]^{-1}   \no \\ 
& \qquad \times u_{R_2}T(z)v_{R_2}\big)\no \\
& \quad =1+\frac{1}{G_{(0,R_2),D}^{(0)}(z,R,R)}    \no \\
&\qquad \times \big(v_{R_2}G_{(0,R_2),D}^{(0)}(z,R,\cdot), \big[I_{(0,R_2)}+u_{R_2}
(-\Delta_{(0,R_1)\cup(R_1,R_2),D}-z)^{-1}v_{R_2}\big]^{-1}     \no \\ 
&\qquad \quad \;\, \times u_{R_2} G_{(0,R_2),D}^{(0)}(z,\cdot,R)\big)_{L^2((0,R_2);dx)}   \no \\ 
& \quad := \cS(z), \quad z\in \C \backslash  \R.    \lb{169}
\end{align}
Finally, replacing $z$ by $\lambda+i\varepsilon $, taking arguments, and taking the 
limit $\varepsilon  \downarrow 0$ throughout \eqref{168b} yields
\begin{align}\lb{170}
& \xi(\lambda; -\Delta_{(0,R_2),D} + V_R, -\Delta_{(0,R_2),D})    \no \\ 
& \quad =
\xi(\lambda; -\Delta_{(0,R_1)\cup(R_1,R_2),D}+V|_{(0,R_2)}, -\Delta_{(0,R_1)\cup(R_1,R_2),D})      \\
& \qquad +
\lim_{\varepsilon  \downarrow 0}\Im(\ln (\cS (\lambda + i  \varepsilon ))) 
\,  \text{ for a.e.\ $\lambda \in\bbR$.}     \no 
\end{align}
By Lemma \ref{lB.1}, the spectral shift function on the right-hand side of \eqref{170} is the sum of two spectral shift functions.  Thus, in summary, we have the following decomposition formula:

\begin{lemma}[Spectral Shift Function Comparison Formula] Let $0<R_1<R_2$ and assume that $V \in L^1((0,R_2);dx)$ is real-valued. Then 
\begin{align}
& \xi(\lambda; -\Delta_{(0,R_2),D} + V_R, -\Delta_{(0,R_2),D})   \no \\
& \quad = \xi(\lambda; -\Delta_{(0,R_1),D} + V_R, -\Delta_{(0,R_1),D})     \no \\
& \qquad 
+ \xi\big(\lambda;-\Delta_{(R_1, R_2),D}+V|_{(R_1,R_2)}, -\Delta_{(R_1, R_2),D})   \\
& \qquad + \lim_{\varepsilon  \downarrow 0}
\Im(\ln (\cS (\lambda + i \varepsilon ))) \, \text{ for a.e.\ $\lambda \in\bbR$.}     \no 
\end{align}
\end{lemma}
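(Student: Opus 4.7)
The plan is to read off the comparison formula by combining Krein's resolvent formula for the rank-one perturbation joining $-\Delta_{(0,R_2),D}$ to $-\Delta_{(0,R_1)\cup(R_1,R_2),D}$ with the block-diagonal decomposition already treated in Lemma \ref{lB.1}, and then to translate the resulting multiplicative Fredholm determinant identity into an additive spectral shift function identity via the boundary value formula \eqref{2.12}.

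More precisely, I would first recall the identity \eqref{168} together with the explicit rank-one operator $T(z)$ obtained from \eqref{167}, and form the Birman--Schwinger-type kernel $u_{R_2}(-\Delta_{(0,R_2),D}-z)^{-1}v_{R_2}$. Factoring $I_{(0,R_2)}+u_{R_2}(-\Delta_{(0,R_1)\cup(R_1,R_2),D}-z)^{-1}v_{R_2}$ out of $I_{(0,R_2)}+u_{R_2}(-\Delta_{(0,R_2),D}-z)^{-1}v_{R_2}$ yields the multiplicativity of Fredholm determinants in \eqref{168b}; the second factor is handled with the identity ${\det}_{\cH}(I_{\cH}+A)=1+\tr_{\cH}(A)$ for rank-one $A$, reducing it to the scalar quantity $\cS(z)$ defined in \eqref{169}. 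Simultaneously, Lemma \ref{lB.1} already identifies
\begin{equation*}
\xi(\lambda; -\Delta_{(0,R_1)\cup(R_1,R_2),D}+V_{R_2}, -\Delta_{(0,R_1)\cup(R_1,R_2),D})
\end{equation*}
with the sum of the two half-interval spectral shift functions appearing on the right-hand side of the target formula.

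Next, I would apply \eqref{2.12} to the pair $\big(-\Delta_{(0,R_2),D}+V_{R_2},\,-\Delta_{(0,R_2),D}\big)$: setting $z=\lambda+i\varepsilon$ in \eqref{168b}, taking $\pi^{-1}\Im \ln(\cdot)$ of both sides, and letting $\varepsilon\downarrow 0$ transforms the product of three determinants into a sum of three boundary-value terms. The first two terms are, by \eqref{2.12} and Lemma \ref{lB.1}, precisely the two Dirichlet spectral shift functions for $(0,R_1)$ and $(R_1,R_2)$; the third term is by construction the correction $\lim_{\varepsilon\downarrow 0}\Im(\ln(\cS(\lambda+i\varepsilon)))$ in the statement. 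The normalization \eqref{2.13} is respected by each factor separately because $\|u_{R_2}(-\Delta_{\cdot,D}-z)^{-1}v_{R_2}\|_{\cB_1}\to 0$ as $\Im(z)\to +\infty$, and an analogous estimate for the rank-one correction $\cS(z)\to 1$ holds from the explicit formula \eqref{169}, so no branch ambiguities arise.

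The main obstacle I anticipate is justifying the existence of the limit $\lim_{\varepsilon\downarrow 0}\Im(\ln \cS(\lambda+i\varepsilon))$ for a.e.\ $\lambda\in\bbR$ and the compatibility of the three boundary value limits; this requires knowing that the three Fredholm determinants are non-vanishing as $\varepsilon\downarrow 0$ outside a set of Lebesgue measure zero, which follows because each is the boundary value of a Herglotz-type analytic function on $\bbC_+$ whose logarithm has a.e.\ boundary values (cf.\ the standard argument underlying \eqref{2.12}). Once this is in place, equating imaginary parts of the logarithm of \eqref{168b} with $z=\lambda+i\varepsilon$ and using \eqref{164} yields the asserted decomposition for a.e.\ $\lambda\in\bbR$.
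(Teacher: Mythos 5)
Your proposal follows essentially the same route as the paper: Krein's formula \eqref{167}--\eqref{168} producing the rank-one correction $T(z)$, the determinant factorization \eqref{168b} with $\det_{\cH}(I_{\cH}+A)=1+\tr_{\cH}(A)$ reducing the second factor to $\cS(z)$, the boundary-value passage $z=\lambda+i\varepsilon$, $\varepsilon\downarrow 0$ via \eqref{2.12}, and finally Lemma \ref{lB.1} to split the decoupled spectral shift function into the two half-interval contributions. Your added remarks on the normalization \eqref{2.13} and on the a.e.\ existence of $\lim_{\varepsilon\downarrow 0}\Im(\ln\cS(\lambda+i\varepsilon))$ make explicit points the paper leaves implicit, but the argument is the same.
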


\medskip

\noindent {\bf Acknowledgments.}
We are indebted to Barry Simon for helpful discussions. 

\medskip


\end{document}